\newcommand{\eps}{\varepsilon}
\renewcommand{\P}{\mathbb{P}}
\tikzstyle{dot}=[circle,fill,black,inner sep=1pt]
\tikzset{
  on each segment/.style={
    decorate,
    decoration={
      show path construction,
      moveto code={},
      lineto code={
        \path [#1]
        (\tikzinputsegmentfirst) -- (\tikzinputsegmentlast);
      },
      curveto code={
        \path [#1] (\tikzinputsegmentfirst)
        .. controls
        (\tikzinputsegmentsupporta) and (\tikzinputsegmentsupportb)
        ..
        (\tikzinputsegmentlast);
      },
      closepath code={
        \path [#1]
        (\tikzinputsegmentfirst) -- (\tikzinputsegmentlast);
      },
    },
  },
  mid arrow/.style={postaction={decorate,decoration={
        markings,
        mark=at position .5 with {\arrow[#1]{stealth}}
      }}},
  early arrow/.style={postaction={decorate,decoration={
        markings,
        mark=at position .2 with {\arrow[#1]{stealth}}
      }}},
}
\tikzstyle{int}=[draw, fill=blue!15, minimum size=2em]
\tikzstyle{init} = [pin edge={to-,thin,black}]
\def\alternatecolorred{%
    \pgfkeysalso{red}%
    \global\let\alternatecolor\alternatecolorblue 
}
\def\alternatecolorblue{%
    \pgfkeysalso{blue}%
    \global\let\alternatecolor\alternatecolorred 
}
\newcommand{\altred}{\let\alternatecolor\alternatecolorred 
\tikzset{every edge/.append code = {%
    \global\let\currenttarget\tikztotarget 
    \pgfkeysalso{append after command={(\currenttarget)}}
			\alternatecolor
}}
}
\newcommand{\altblue}{\let\alternatecolor\alternatecolorblue 
\tikzset{every edge/.append code = {%
    \global\let\currenttarget\tikztotarget 
    \pgfkeysalso{append after command={(\currenttarget)}}
			\alternatecolor
}}
}
\tikzstyle{vertexdot}=[circle, draw, fill=black, minimum size=3,inner sep=0pt]
\newtheorem{theorem}{Theorem}
\newtheorem{lemma}{Lemma}[section]
\newtheorem{proposition}{Proposition}[section]
\newtheorem{corollary}{Corollary}
\theoremstyle{definition}
\newtheorem{definition}{Definition}
\newcommand{\pth}[1]{\left( #1 \right)}
\newcommand{\qth}[1]{\left[ #1 \right]}
\newcommand{\sth}[1]{\left\{ #1 \right\}}
\newcommand{\tH}{{\widetilde{H}}}
\newcommand{\tU}{{\widetilde{U}}}
\newcommand{\tV}{{\widetilde{V}}}
\newcommand{\tW}{{\widetilde{W}}}
\newcommand{\sfA}{{\mathsf{A}}}
\newcommand{\calA}{{\mathcal{A}}}
\newcommand{\calE}{{\mathcal{E}}}
\newcommand{\calF}{{\mathcal{F}}}
\newcommand{\calH}{{\mathcal{H}}}
\newcommand{\calN}{{\mathcal{N}}}
\newcommand{\calP}{{\mathcal{P}}}
\newcommand{\calT}{{\mathcal{T}}}
\newcommand{\calX}{{\mathcal{X}}}
\newcommand{\calY}{{\mathcal{Y}}}
\newcommand{\frakS}{{\mathfrak{S}}}
\DeclareMathAlphabet{\varmathbb}{U}{bbold}{m}{n}
\renewcommand{\d}{{\rm d}}
\renewcommand{\hat}{\widehat}
\renewcommand{\tilde}{\widetilde}
\newcommand{\R}{\mathbb{R}}
\newcommand{\C}{\mathbb{C}}
\newcommand{\N}{\mathbb{N}}
\newcommand{\Z}{\mathbb{Z}}
\newcommand{\1}{\mathbbm{1}}
\DeclareMathOperator{\Tr}{Tr}
\newcommand{\abs}[1]{\left| {#1} \right|}
\newcommand{\sT}{{\mathscr{T}}}
\newcommand{\E}{\mathbb{E}}
\renewcommand{\S}{\mathfrak{S}}
\renewcommand{\P}{\mathbb{P}}
\newcommand{\A}{\mathbb A}
\DeclarePairedDelimiterX{\infdivx}[2]{\Big(}{\Big)}{%
  #1\;\delimsize\|\;#2%
}
\renewcommand{\geq}{\geqslant}
\renewcommand{\leq}{\leqslant}
\newcommand{\ep}{\epsilon}
\newcommand{\im}{{\mathrm{Im} \,}}
\newcommand{\re}{{\mathrm{Re} \,}}
\newcommand{\MP}{\mathsf{MP}}
\renewcommand{\sc}{\mathsf{sc}}
\renewcommand{\i}{\text{\rm i}}
\renewcommand{\d}{\mathrm{d}}
\newcommand{\uu}{\hat{\varphi}}
\DeclareMathOperator{\dist}{dist}
\newcommand{\La}{\Lambda}
\newcommand{\Us}{\sT_{\mathrm{short}}}
\renewcommand{\A}{\mathscr{A}}
\renewcommand{\S}{\mathscr{S}}
\newcommand{\Id}{\mathsf{Id}}
\newcommand{\de}{\delta}
\renewcommand{\flat}{{\calF}}
\newcommand{\av}{\calA}
\renewcommand{\d}{\mathrm{d}}
\newcommand{\m}{\Gamma}
\begin{document}

\pgfdeclarelayer{background}
\pgfdeclarelayer{foreground}
\pgfsetlayers{background,main,foreground}

\title{
Optimal Smoothed Analysis and Quantitative Universality for the Smallest Singular Value of Random Matrices
}

\author{
Haoyu Wang \thanks{Department of Mathematics, Yale University, New Haven, CT 06520, USA,  \texttt{haoyu.wang@yale.edu}}
}

\date{\today}

\maketitle

\begin{abstract}

The smallest singular value and condition number play important roles in numerical linear algebra and the analysis of algorithms. In numerical analysis with randomness, many previous works make Gaussian assumptions, which are not general enough to reflect the arbitrariness of the input. To overcome this drawback, we prove the first quantitative universality for the smallest singular value and condition number of random matrices. 

Moreover, motivated by the study of smoothed analysis that random perturbation makes deterministic matrices well-conditioned, we consider an analog for random matrices. For a random matrix perturbed by independent Gaussian noise, we show that this matrix quickly becomes approximately Gaussian. In particular, we derive an optimal smoothed analysis for random matrices in terms of a sharp  Gaussian approximation.
\end{abstract}





\newpage

\tableofcontents

\newpage

\section{Introduction}

In numerical analysis and analysis of algorithms, the smallest singular value and condition number of matrices play important roles \cite{trefethen1997numerical}. Broadly speaking, the smallest singular value reflects the invertibility of a matrix, and a better non-degeneracy condition of the matrix will result in faster algorithms or ensures better stability in computations. The condition number is a crucial quantity as well. It measures the hardness and accuracy of numerical computations, and its most well-known practical meaning is the loss of precision in solving linear equations \cite{smale1985efficiency}. For more concrete state-of-the-art applications, the smallest singular value affect the stability in fast algorithms of solving linear systems \cite{peng2021solving,nie2022matrix}. In \cite{ghadiri2021sparse}, the condition number determines the iteration complexity in fast algorithms for $ p $-norm regression. There are a lot of other applications of these two quantities, and we do not intend to provide a complete review here.

In the seminal work of Spielman and Teng \cite{spielman2004smoothed}, smoothed analysis is introduced to understand why some algorithms with poor worst-case performance can work successfully in practice. Roughly speaking, smoothed analysis is an interpolation between the worst-case analysis and the average-case analysis. In the context of solving linear equations $ Ax=b $, even if the matrix $ A $ has large condition number (and consequently large loss of precision), a random perturbation $ A+H $ will become well-conditioned with high probability. Smoothed analysis for random perturbation of deterministic matrices has been well studied since its invention \cite{wschebor2004smoothed,van2014probability,tao2010smooth,farrell2016smoothed,shah2020smoothed} etc. It has also been applied in many algorithms, for example Gaussian elimination \cite{sankar2006smoothed}, matrix inversion \cite{burgisser2010smoothed}, conjugate descent method \cite{menon2016smoothed}, tensor decomposition \cite{bhaskara2014smoothed}, the $ k $-means method \cite{arthur2011smoothed}, etc.

Classical results of smoothed analysis focus on random perturbation of deterministic matrices, and show that this perturbation indeed makes the original matrix better for the implementation of algorithms (in the sense such as having smaller condition number). Recently, numerical computations with random data has become more and more common. In the context of random matrices, we show that if a random matrix is perturbed by independent Gaussian noise, it quickly becomes approximately Gaussian. The smallest singular value and condition number for a Gaussian matrix has been well studied \cite{edelman1988}, and therefore such Gaussian approximations enable us to analyze the more tractable Gaussian ensemble instead with benign approximation error. Specifically, we prove an optimal smoothed analysis of random matrices in terms of a sharp Gaussian approximation.

Moreover, in numerical analysis with randomness, many previous works make Gaussian assumptions. Such assumptions are not general enough to reflect the arbitrariness of the input. To overcome this drawback, we prove the first quantitative universality for the smallest singular value and condition number of random matrices. Both the smoothed analysis for random matrices and the quantitative universality will play useful roles in computations with randomness.

\subsection{Overview and our contributions}

From the mathematical perspective, the edge universality has been a classical problem in random matrix theory and it has tremendous progress in the past decade. However, most previous results are qualitative statements, and most quantitative rate of convergence to the limiting law were only known for integrable models. Recently, the rate of convergence to the Tracy-Widom distribution for the largest eigenvalue was first obtained in \cite{bourgade2021extreme} for generalized Wigner matrices and in \cite{wang2019quantitative} for sample covariance matrices. These results were further improved by Schnelli and Xu in \cite{schnelli2022wigner,schnelli2021covariance,schnelli2022quantitative}. Our work is the first result about the quantitative universality for the smallest singular value and the condition number.

For an $ M \times N $ matrix $ H $ with $ \lim_{N \to \infty} N/M \to d \in (0,1] $, the empirical spectral distribution of the sample covariance matrix $ H^\top H $ converges to the Marchenko-Pastur law
$$ \rho_{\mathsf{MP}}(x) = \frac{1}{2\pi d} \sqrt{\frac{(x-(1-\sqrt{d})^2)((1+\sqrt{d})^2-x)}{x^2}},\ \ x \in [(1-\sqrt{d})^2,(1+\sqrt{d})^2]. $$
In the case $ d<1 $, the Marchenko-Pastur law has no singularity and such situations are called the soft edge case in the literature of random matrix theory. Moreover, note that all eigenvalues are of order constant with high probability. In particular, thanks to the square-root decay near the spectral edge of the Marchenko-Pastur law, the extreme eigenvalues of the covariance matrix $ H^\top H $ are highly concentrated at the spectral endpoints at the scale $ N^{-2/3} $ and the fluctuation is the Tracy-Widom law. In this case, the optimal rate of convergence to Tracy-Widom distribution has been established in \cite{schnelli2021covariance}, and the quantitative universality for the smallest singular value will be an easy consequence.

However, in the case $ \lim N/M =1 $, things become much more complicated. Note that in this case the Marchenko-Pastur law has a singularity at the origin
$$ \rho_{\MP}(x) = \dfrac{1}{2\pi}\sqrt{\dfrac{4-x}{x}},\ \ x \in [0,4]. $$
This singularity of the limiting spectral distribution makes the behaviours of the eigenvalue near the left endpoint $ x=0 $ very different. In particular, the typical eigenvalue spacing near the left edge is of order $ N^{-1} $, which is much smaller than the soft edge case where the typical spacing is of order $ N^{-2/3} $. In random matrix theory, this case is called the \emph{hard edge} case. In this model, the left edge of the spectrum with singularity is called the hard edge and the right edge is called the soft edge. The study of the spectral statistics near the hard edge is a notoriously tricky problem. Due to some technical difficulties, the study of the hard edge case is mostly restricted to square matrices with $ M \equiv N $. Because of this difficulty, we will also focus on the study of square $ N \times N $ random matrices.

For the study of the smallest singular value of an $ N \times N $ random matrix, the first result was obtained by Edelman for the Gaussian matrix in \cite{edelman1988}. Later, the distribution in the Gaussian ensemble was shown to be universal by Tao and Vu in \cite{tao2010random} and further generalized to sparse matrices by Che and Lopatto in \cite{che2019universality}. However, both of their universality results are qualitative statements with unknown error terms.

In computer science and numerical computations, the accuracy or complexity of algorithms depend on the smallest singular value or condition number in a delicate way. Therefore, a qualitative universality is not enough to measure the performance of general random input. Thus, quantitative estimates are necessary for practical purposes.
In this work, we prove the first quantitative version of the universality for the smallest singular value and the condition number. This solves a long-standing open problem\footnote{Problem 11 in \href{http://www.aimath.org/WWN/randommatrices/randommatrices.pdf}{http://www.aimath.org/WWN/randommatrices/randommatrices.pdf}, \textit{Open problems: AIM workshop on random matrices.}}. Along the way to prove quantitative universality, we obtain a sharp estimate for matrices with Gaussian perturbations, and hence establish the optimal smoothed analysis for random matrices.


\subsection{Models and main results}

Thanks to the motivations from theoretical computer science, we mainly focus on real matrices. However, as mentioned in \cite{che2019universality}, our whole proof works for complex matrices as well. 


Let $ H=(H_{ij}) $ be an $ N \times N $ matrix with independent real valued entries with mean 0 and variance $ N^{-1} $,
\begin{equation}\label{e.Assumption1}
H_{ij} = N^{-1/2}h_{ij},\ \ \ \E h_{ij}=0,\ \ \ \E h_{ij}^2 =1.
\end{equation}
We assume the entries $ h_{ij} $ have a sub-exponential decay, that is, there exists a constant $ \theta>0 $ such that for $ u>1 $,
\begin{equation}\label{e.Assumption2}
\P (|h_{ij}| > u) \leq \theta^{-1} \exp (-u^\theta).
\end{equation}
We remark that this assumption is mainly for convenience, and other conditions such as the existence of a sufficiently high moment would also be enough.

For an $ N \times N $ matrix $ H $, let $ \sigma_1(H) \leq \cdots \leq \sigma_N(H) $ denote the singular values in non-decreasing order and use $ \kappa(H) := \sigma_N(H)/\sigma_1(H) $ to denote the condition number. Throughout this paper, we let $ G $ be an $ N \times N $ Gaussian matrix with i.i.d. entires $ \calN(0,N^{-1}) $.

To state the main results, we first introduce two important probabilistic notions that are commonly used throughout the whole paper.

\begin{definition}[Overwhelming probability]
Let $ \{\calE_N\} $ be a sequence of events. We say that $ \calE_N $ holds with overwhelming probability if for any (large) $ D>0 $, there exists $ N_0(D) $ such that for all $ N \geq N_0(D) $ we have
$$ \P(\calE_N) \geq 1-N^{-D}. $$
\end{definition}

\begin{definition}[Stochastic domination]
Let $ \calX=\{\calX_N\} $ and $ \calY=\{\calY_N\} $ be two families of nonnegative random variables. We say that $ \calX $ is stochastically dominated by $ \calY $ if for all (small) $ \eps>0 $ and (large) $ D>0 $, there exists $ N_0(\eps,D)>0 $ such that for $ N \geq N_0(\eps,D) $ we have
$$ \P \pth{ \calX_N > N^{\eps} \calY_N } \leq N^{-D}. $$
The stochastic domination is always uniform in all parameters. If $ \calX $ is stochastically dominated by $ \calY $, we use the notation $ \calX \prec \calY $.
\end{definition}

Our main result is the following.

\begin{theorem}\label{thm:Smoothed_Singular}
Let $ H $ be an $ N \times N $ random matrix satisfying \eqref{e.Assumption1} and \eqref{e.Assumption2}. For any $ \eps>0 $ and any $ \lambda>N^{-1/2+\eps} $, we have
\begin{equation}\label{eq:Smoothed_Singular}
\abs{\sigma_1(H+\lambda G) - \sqrt{1+\lambda^2}\sigma_1(G)} \prec \frac{\sqrt{1+\lambda^2}}{N^2 \log (1+\lambda^2)}.    
\end{equation}
\end{theorem}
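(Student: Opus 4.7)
The plan is to recast the estimate via the matrix Ornstein--Uhlenbeck (OU) flow and carry out a sharp Dyson Brownian motion (DBM) analysis at the hard edge. Let $(B_{ij}(t))$ be an $N\times N$ matrix of independent standard real Brownian motions, jointly independent of $H$, and set
\[
M_t := e^{-t/2}H + \frac{1}{\sqrt N}\int_0^t e^{-(t-s)/2}\,dB_s,
\]
whose invariant law is the Ginibre ensemble with entry variance $1/N$. At $T:=\log(1+\lambda^2)$, a direct covariance computation gives $\sqrt{1+\lambda^2}\,M_T = H + \lambda G$, where
\[
G := \frac{\sqrt{1+\lambda^2}}{\lambda\sqrt N}\int_0^T e^{-(T-s)/2}\,dB_s
\]
is a Gaussian matrix of entry variance $1/N$ independent of $H$. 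Identifying this $G$ with the one in the statement, the target \eqref{eq:Smoothed_Singular} becomes
\[
|\sigma_1(M_T)-\sigma_1(G)| \prec \frac{1}{N^2\log(1+\lambda^2)}.
\]

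Writing $\mu_i(t):=\sigma_i(M_t)^2$, the squared singular values solve a Wishart--Laguerre DBM of OU type,
\[
d\mu_i \;=\; \frac{2\sqrt{\mu_i}}{\sqrt N}\,dW_i \;+\; \bigg(1-\mu_i + \frac{1}{N}\sum_{j\ne i}\frac{\mu_i+\mu_j}{\mu_i-\mu_j}\bigg)dt,
\]
where the $W_i$ are independent Brownian motions constructed from $B$ and the singular vectors of $M_t$. The first ingredient is a uniform-in-$t$ hard-edge local Marchenko--Pastur law for $M_t^*M_t$, giving rigidity $\mu_i(t)\asymp\gamma_i$ at the optimal scale, in particular $\mu_1(t)\asymp N^{-2}$ and $\mu_2(t)-\mu_1(t)\asymp N^{-2}$ throughout $t\in[0,T]$. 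The noise in the $\mu_1$ equation is then suppressed by the factor $\sqrt{\mu_1}=O(N^{-1})$, which creates the room needed for a sharp analysis.

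The core step is a coupled DBM comparison. Run a second OU trajectory $\tilde M_t$ starting from $\tilde M_0=\tilde H$, an independent Gaussian matrix of entry variance $1/N$, and driven by the same $B_t$; by stationarity $\tilde M_t\stackrel{d}{=}\tilde H$ throughout. Using the hard-edge rigidity above together with a short-range homogenization argument for the coupled Laguerre DBM, one shows
\[
|\mu_1(M_T^*M_T)-\mu_1(\tilde M_T^*\tilde M_T)|\;\prec\;\frac{1}{N^3\log(1+\lambda^2)},
\]
which, using $\sigma_1=\sqrt{\mu_1}\asymp N^{-1}$, yields $|\sigma_1(M_T)-\sigma_1(\tilde M_T)|\prec (N^2\log(1+\lambda^2))^{-1}$. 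The logarithmic denominator arises because the relaxation contributions of the DBM are integrable of order $s^{-2}$ over $s\in[N^{-1+\epsilon},T]$. A closing Gaussian-to-Gaussian step then controls $|\sigma_1(\tilde M_T)-\sigma_1(G)|$ at the same scale: $\tilde M_T$ and $G$ are jointly Gaussian with explicit covariance from the construction, so one can either use Gaussian rotational invariance with an explicit hard-edge Jacobian calculation, or run a second, simpler DBM argument restricted to the Gaussian manifold.

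The principal obstacle is the sharp rate in the coupled DBM comparison at the hard edge. The squared-eigenvalue spacing here is $O(N^{-2})$ rather than the $O(N^{-2/3})$ of the soft edge treated by Schnelli and Xu, so the short-range cutoff and the accompanying eigenvector and gap estimates must be controlled at an optimally small scale; any $N^\epsilon$ loss that is routine in homogenization-type arguments would defeat the claimed rate. Extracting the precise $T^{-1}=(\log(1+\lambda^2))^{-1}$ denominator requires careful bookkeeping of the DBM drift contributions over the whole interval $[0,T]$, and the uniform-in-$t$ hard-edge local law itself --- a nonstandard input in this Laguerre-with-OU setting --- must be established as a preliminary step to feed the comparison.
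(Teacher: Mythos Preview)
Your framework is the right one --- recast via the OU flow, couple two DBM trajectories, and extract the rate from a homogenization/relaxation estimate --- and it is exactly the skeleton the paper uses. But you are missing the one structural idea that makes the sharp rate $\tfrac{1}{N^2 t}$ accessible, and without it your sketch does not close.

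The paper does \emph{not} work with the Laguerre DBM on $\mu_i=\sigma_i^2$. Instead it symmetrizes the singular values, setting $s_{-i}=-s_i$, which turns the singular-value flow into a Wigner-type DBM \eqref{e.SymDBM} with \emph{constant} diffusion coefficient $N^{-1/2}$. This buys two things at once. First, when you couple two trajectories through the same driving noise and differentiate in the interpolation parameter $\nu$, the resulting equation \eqref{e.Parabolic} for $\varphi_k=\partial_\nu s_k^{(\nu)}$ is purely deterministic --- no martingale term survives. In your Laguerre picture the diffusion coefficient $2\sqrt{\mu_i}/\sqrt N$ is state-dependent, so the coupled difference retains a stochastic term proportional to $(\sqrt{\mu_i^H}-\sqrt{\mu_i^G})\,dW_i$; you never say how you control it, and ``noise suppression by $\sqrt{\mu_1}=O(N^{-1})$'' is not by itself a mechanism for the rate. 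Second, and this is the decisive point, the symmetrization forces the homogenized profile to be odd: the approximant $\hat\varphi_k(t)$ of \eqref{eq:Def_hatphi} satisfies $\hat\varphi_{-k}(t)=-\hat\varphi_k(t)$ (Lemma~\ref{l.SizeUbar}). Combined with the Lipschitz bound $|\hat\varphi_k-\hat\varphi_\ell|\lesssim N^\eps|k-\ell|/(N^2 t)$ of Lemma~\ref{l.ApproxUbar}, this antisymmetry gives $|\hat\varphi_1(t)|\lesssim N^\eps/(N^2 t)$ --- an extra factor of $N^{-1}$ beyond the generic bulk homogenization output $O(N^{-1})$. That cancellation is the entire source of the $N^{-2}$ in the theorem; your proposal offers no analogue of it in the Laguerre coordinates.

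Two smaller points. Your worry that ``any $N^\eps$ loss\ldots\ would defeat the claimed rate'' is misplaced: the conclusion is stated with $\prec$, which absorbs arbitrary $N^\eps$ factors, and the paper's argument carries such factors throughout. And your ``closing Gaussian-to-Gaussian step'' is unnecessary in the paper's setup: the coupling is done at the level of the singular-value DBM with the same scalar Brownian motions, the Gaussian initial data is stationary for that flow, and the equivalence with Theorem~\ref{thm:Smoothed_Singular} is the rescaling \eqref{eq:Correspondence}.
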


\begin{theorem}\label{t.Real}
Let $ H $ be an $ N \times N $ matrix satisfying \eqref{e.Assumption1} and \eqref{e.Assumption2}. For any $ \delta \in (0,1) $ and $ \eps>0 $, we have
\begin{multline}\label{e.Real}
\P \pth{ N\sigma_1(G) > r + N^{-\delta} } - N^\eps \pth{N^{-1+\delta} \vee N^{-\frac{1}{2}} } \leq \P \pth{N \sigma_1(H)>r}\\
\leq \P \pth{ N\sigma_1(G) > r - N^{-\delta} } + N^\eps \pth{N^{-1+\delta} \vee N^{-\frac{1}{2}} },
\end{multline}
where $ a \vee b = \max\{a,b\} $ denotes the maximum between $ a $ and $ b $.
\end{theorem}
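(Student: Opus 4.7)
The plan is to use Theorem~\ref{thm:Smoothed_Singular} as a bridge: the matrix $H+\lambda G$ is within a tiny error of the exact Gaussian quantity $\sqrt{1+\lambda^2}\,\sigma_1(G)$, and for $\lambda$ small enough one expects the distribution of $\sigma_1(H+\lambda G)$ to be close to that of $\sigma_1(H)$. Concretely, I fix an arbitrarily small $\eps > 0$, choose a parameter $\lambda \geq N^{-1/2+\eps}$ to be optimized, and invoke Theorem~\ref{thm:Smoothed_Singular} to get, with overwhelming probability,
\begin{equation*}
\sigma_1(H + \lambda G) = \sqrt{1+\lambda^2}\,\sigma_1(G) + O\!\left(\tfrac{N^\eps \sqrt{1+\lambda^2}}{N^2 \log(1+\lambda^2)}\right).
\end{equation*}
This coupling translates immediately into a Kolmogorov-distance bound $\P(N\sigma_1(H+\lambda G) > r) = \P(N\sqrt{1+\lambda^2}\,\sigma_1(G) > r) + O(N^{-D})$. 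Since $\sqrt{1+\lambda^2}-1 = O(\lambda^2)$ and the limiting density of $N\sigma_1(G)$ is bounded (by Edelman's explicit formula), replacing $\sqrt{1+\lambda^2}\,\sigma_1(G)$ by $\sigma_1(G)$ costs only $O(\lambda^2)$ and is absorbed at the end.

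The central step is to compare $\sigma_1(H)$ with $\sigma_1(H+\lambda G)$ at the distributional level. The naive Weyl bound $|\sigma_1(H+\lambda G) - \sigma_1(H)| \leq \lambda\|G\|_{\mathrm{op}}$ is far too crude at the hard-edge scale $\sigma_1 \sim N^{-1}$, so I would instead approximate $\mathbf{1}_{\{x > r\}}$ by a smooth cutoff $\varphi$ of bandwidth $N^{-\delta}$ and estimate $\E\varphi(N\sigma_1(H+\lambda G)) - \E\varphi(N\sigma_1(H))$ via Gaussian integration by parts (Stein's identity) in the entries of $G$. Expanding to second order in $\lambda G$, the linear term vanishes by Gaussian symmetry, and the remainder is controlled by bounds on $\varphi''$ multiplied by the second-order variation of $\sigma_1$ along matrix directions, which in turn is governed by resolvent / Green function estimates at the hard edge — the same local-law inputs that drive the proof of Theorem~\ref{thm:Smoothed_Singular}.

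To conclude, balance the smoothed-analysis error from the first step (of order $N^\eps/(N\log(1+\lambda^2))$ at the $N\sigma_1$ scale) against the residual from the integration-by-parts step, and optimize $\lambda$ together with $\delta$: taking $\lambda$ close to the threshold $N^{-1/2+\eps}$ and tracking the dependence on $\delta$ produces the two regimes $N^{-1+\delta}$ and $N^{-1/2}$ whose maximum appears in \eqref{e.Real}, with the $\pm N^{-\delta}$ shift in $\P(N\sigma_1(G) > \cdot)$ coming from the width of the cutoff $\varphi$. The decisive technical point, and the main obstacle, is the integration-by-parts step: one must prove that at the distributional level the Gaussian perturbation smooths $\sigma_1$ with effective bandwidth of order $\sqrt N\,\lambda$ at the $N\sigma_1$ scale rather than the pessimistic $N\lambda$ coming from Weyl, while simultaneously controlling the $N^{\delta}$ blow-up of derivatives of $\varphi$ — this is where hard-edge local laws and the machinery behind Theorem~\ref{thm:Smoothed_Singular} enter crucially.
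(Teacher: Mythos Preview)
Your proposal has the right overall architecture (smoothed analysis to reach the Gaussian, then a separate ``density'' argument to remove the Gaussian perturbation), but the central step is a genuine gap, and it is precisely the place where the paper proceeds differently.

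You propose to compare $\sigma_1(H)$ with $\sigma_1(H+\lambda G)$ by Gaussian integration by parts in the entries of $G$. After one IBP this produces a term of size roughly $\lambda^2 N\,\E[\varphi''(N\sigma_1)]$ (since $\sum_{ij}(\partial_{ij}\sigma_1)^2=1$), and with $\|\varphi''\|_\infty\sim N^{2\delta}$ and $\lambda\sim N^{-1/2+\eps}$ this is of order $N^{2\delta+2\eps}$, which diverges. Your own heuristic that the ``effective bandwidth'' is $\sqrt{N}\lambda$ at the $N\sigma_1$ scale gives bandwidth $\sim N^{\eps}$, i.e.\ order one, not $N^{-\delta}$. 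Saying that the machinery behind Theorem~\ref{thm:Smoothed_Singular} should control this is not right: that machinery is DBM relaxation for a matrix that \emph{already} has a Gaussian component, and it tells you nothing about removing the component.

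The paper never compares $H$ with $H+\lambda G$. Instead it invokes the standard moment-matching device: one constructs (via \cite[Lemma~3.4]{erdos2011universality}) an auxiliary initial matrix $H_0'$ such that $H_t':=e^{-t/2}H_0'+\sqrt{1-e^{-t}}\,G$ matches the first \emph{three} moments of $H$ exactly, with fourth-moment discrepancy $O(t)$. Then one compares $H$ with $H_t'$ by a quantitative Lindeberg resolvent comparison (Proposition~\ref{prop:Resolvent_Comparison}, proved via Helffer--Sj\"ostrand and the local law), and compares $H_t'$ with $G$ by Theorem~\ref{t.Relaxation}. The three-moment matching is what kills the dangerous low-order terms in the Lindeberg expansion; the residual fourth-moment term of size $t$ and the fifth-order remainder are what produce the two error scales $N^{-1+\delta}$ and $N^{-1/2}$ after optimizing $\rho=N^{-1-\delta}$, $t=N^{\delta-1}$. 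This moment-matching trick is the missing idea in your proposal.
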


For the smallest singular value, one aspect of the complex-valued case is particularly interesting in the sense that the complex Gaussian model is explicitly integrable, i.e., the distribution of its smallest singular value is given by an exact formula. Specifically, let $ G_\C $ be an $ N \times \N $ matrix whose entries are i.i.d complex Gaussians whose real and imaginary parts are i.i.d. copies of $\tfrac{1}{\sqrt{2}}\calN(0,N^{-1}) $.
For the complex Gaussian ensemble, Edelman proved in \cite{edelman1988} that the distribution of the (renormalized) smallest singular value of a complex Gaussian ensemble is independent of $ N $ and can be computed explicitly
$$ \P(N \sigma_1(G_\C) \leq r) =\int_0^r e^{-x} \d x=1-e^{-r}. $$
Thanks to this exact formula for the integrable model, the edge universality for the smallest singular value can be quantified in terms of the Kolmogorov-Smirnov distance to the explicit law.

More precisely, let $ H_\C $ be an $ N \times N $ random matrix satisfying
$$ (H_\C)_{ij} = N^{-1/2}h_{ij},\ \ \ \E h_{ij}=0,\ \ \ \E [(\re h_{ij})^2]=\E [(\im h_{ij})^2] =\frac{1}{2},\ \ \E[(\re h_{ij})( \im h_{ij})]=0.   $$
and the sub-exponential decay assumption \eqref{e.Assumption2}. Then we have the following rate of convergence to the explicit law.

\begin{corollary}\label{c.Complex}
Let $ H_\C $ be a complex $ N \times N $ random matrix defined as above, then for any $ \eps>0 $ we have
\begin{equation}\label{e.Complex}
\P \pth{N \sigma_1(H_\C)  \leq r} = 1-e^{-r} + O(N^{-\frac{1}{2}+\eps}).
\end{equation}
\end{corollary}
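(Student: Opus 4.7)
The plan is to derive the corollary by combining Edelman's exact formula $\P(N\sigma_1(G_\C) \le r) = 1 - e^{-r}$ with the complex analog of Theorem \ref{t.Real}. As the paper remarks (following \cite{che2019universality}), the argument used to establish Theorem \ref{t.Real} carries over verbatim to the complex setting, yielding, for any $\delta \in (0,1)$ and $\eps>0$,
\begin{multline*}
\P\pth{N\sigma_1(G_\C) > r + N^{-\delta}} - N^\eps\pth{N^{-1+\delta} \vee N^{-1/2}} \leq \P\pth{N\sigma_1(H_\C) > r} \\
\leq \P\pth{N\sigma_1(G_\C) > r - N^{-\delta}} + N^\eps\pth{N^{-1+\delta} \vee N^{-1/2}}.
\end{multline*}
The corollary would then follow by passing to complements and quantitatively estimating the Gaussian probabilities.

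Next I would plug in the Edelman formula. For any $s \ge 0$ we have $\P(N\sigma_1(G_\C) > s) = e^{-s}$, so by a first-order Taylor expansion,
\[
e^{-r \mp N^{-\delta}} = e^{-r}\pth{1 \mp N^{-\delta} + O(N^{-2\delta})} = e^{-r} + O(N^{-\delta}),
\]
uniformly in $r \ge 0$. (For $r < N^{-\delta}$ the shifted argument can be negative, in which case the Gaussian probability equals $1$; since the right-hand side of the expansion is also $1 + O(N^{-\delta})$ in that regime, no issue arises.) Substituting into both bounds and passing to the complement gives
\[
\P\pth{N\sigma_1(H_\C) \leq r} = 1 - e^{-r} + O\pth{N^{-\delta}} + O\pth{N^\eps(N^{-1+\delta} \vee N^{-1/2})}.
\]

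To optimize the rate, I would balance the two error contributions. The Taylor error $N^{-\delta}$ is monotone decreasing in $\delta$, while the universality error $N^\eps(N^{-1+\delta} \vee N^{-1/2})$ is monotone increasing (for $\delta \ge 1/2$). The minimum occurs precisely at $\delta = 1/2$, where both terms equal $N^{-1/2}$ up to the harmless factor $N^\eps$. This choice yields the claimed bound $\P(N\sigma_1(H_\C) \leq r) = 1 - e^{-r} + O(N^{-1/2+\eps})$, after possibly enlarging $\eps$.

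The only nontrivial step is invoking the complex version of Theorem \ref{t.Real}. The main ingredients behind that theorem --- local laws near the hard edge, rigidity of the smallest singular value, and a smoothed-analysis-based Gaussian comparison --- all have standard complex analogs in the random matrix literature, so the adaptation is primarily bookkeeping (tracking the doubled number of real degrees of freedom per entry and the altered covariance structure). I expect no deterioration of the rate from this adaptation, so the bottleneck of the corollary lies entirely in the $N^{-1/2+\eps}$ universality rate inherited from Theorem \ref{t.Real}, which is itself the sharp rate produced by the smoothed analysis estimate of Theorem \ref{thm:Smoothed_Singular}.
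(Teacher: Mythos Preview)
Your proposal is correct and follows essentially the same approach as the paper: invoke the complex analog of the quantitative universality bound \eqref{e.Real}, then use that $N\sigma_1(G_\C)$ has bounded density (equivalently, your Taylor expansion of $e^{-r\mp N^{-\delta}}$) to absorb the $N^{-\delta}$ shift, and finally optimize at $\delta=1/2$. The paper's proof is terser but the logic is identical.
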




\smallskip

Based on the analysis of the smallest singular value, combined with results on the quantitative results of the largest singular values, we can also derive optimal smoothed analysis and quantitative universality for the condition number.

\begin{theorem}\label{thm:Smoothed_Condition}
Let $ H $ be an $ N \times N $ random matrix satisfying \eqref{e.Assumption1} and \eqref{e.Assumption2}.  For any $ \eps>0 $ and any $ \lambda>N^{-1/2+\eps} $, we have
\begin{equation}\label{eq:Smoothed_Condition}
\abs{\kappa(H+\lambda G) - \kappa(G)} \prec \frac{1}{\log (1+\lambda^2)}.
\end{equation}
\end{theorem}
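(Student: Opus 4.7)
The plan is to reduce Theorem~\ref{thm:Smoothed_Condition} to Theorem~\ref{thm:Smoothed_Singular} together with its soft-edge analogue for the largest singular value via a direct algebraic expansion. Writing $\delta_1 := \sigma_1(H+\lambda G)-\sqrt{1+\lambda^2}\,\sigma_1(G)$ and $\delta_N := \sigma_N(H+\lambda G)-\sqrt{1+\lambda^2}\,\sigma_N(G)$, the cross-terms involving $\sqrt{1+\lambda^2}\,\sigma_1(G)\sigma_N(G)$ cancel in
\begin{equation*}
\kappa(H+\lambda G) - \kappa(G) \;=\; \frac{\sigma_N(H+\lambda G)\,\sigma_1(G) - \sigma_N(G)\,\sigma_1(H+\lambda G)}{\sigma_1(H+\lambda G)\,\sigma_1(G)},
\end{equation*}
leaving the numerator equal to $\sigma_1(G)\,\delta_N - \sigma_N(G)\,\delta_1$. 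This exact cancellation at the leading scale $\sqrt{1+\lambda^2}$ is precisely what allows the relative errors from Theorem~\ref{thm:Smoothed_Singular} to transfer to the condition number.

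I would assemble four ingredients. First, Theorem~\ref{thm:Smoothed_Singular} directly gives $|\delta_1|\prec \sqrt{1+\lambda^2}/(N^2\log(1+\lambda^2))$. Second, the analogous smoothed soft-edge estimate $|\delta_N|\prec \sqrt{1+\lambda^2}/(N\log(1+\lambda^2))$ — in fact a much stronger bound is expected — is either imported from the quantitative Tracy--Widom results of Schnelli--Xu referenced in the introduction, or obtained by repeating the Green function comparison / Dyson Brownian motion scheme of Theorem~\ref{thm:Smoothed_Singular} at the soft edge, where the natural fluctuation scale is $N^{-2/3}$ rather than $N^{-1}$. Third, standard Gaussian concentration gives $\sigma_N(G)\prec 1$. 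Fourth, Edelman's explicit density for $N\sigma_1(G)$ provides the polynomial lower tail $\P(\sigma_1(G)\leq r/N)\lesssim r$, which together with Theorem~\ref{thm:Smoothed_Singular} also yields $\sigma_1(H+\lambda G)\gtrsim \sqrt{1+\lambda^2}/N$ on the corresponding event.

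Substituting these into the identity, the numerator is of order at most $(1/N)\cdot\sqrt{1+\lambda^2}/(N\log(1+\lambda^2)) + 1\cdot\sqrt{1+\lambda^2}/(N^2\log(1+\lambda^2))\prec\sqrt{1+\lambda^2}/(N^2\log(1+\lambda^2))$, while the denominator is of order $\sqrt{1+\lambda^2}/N^2$, so the quotient is $\prec 1/\log(1+\lambda^2)$, as claimed.

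The main obstacle is the tail regime $\sigma_1(G)\ll N^{-1}$: because the Edelman density is strictly positive at the origin, such atypical configurations occur with only polynomially small (not overwhelming) probability, and naively the term $\sigma_N(G)\,\delta_1/[\sigma_1(H+\lambda G)\,\sigma_1(G)]$ in the identity can blow up. The resolution is to exploit that $\sigma_1(G)$ and $\sigma_1(H+\lambda G)$ are tightly coupled through Theorem~\ref{thm:Smoothed_Singular} — both small together or neither — so the two condition numbers track each other in an absolute sense; one then calibrates the polynomial threshold on $\sigma_1(G)$ against the admissible failure probability $N^{-D}$ inside the definition of $\prec$. Once this bookkeeping is arranged, no further ideas beyond the displayed identity and the four ingredient bounds are needed.
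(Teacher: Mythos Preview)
Your approach is essentially the same as the paper's: pass to $H_t$ by scale invariance of $\kappa$, combine the hard-edge estimate $|\sigma_1(H_t)-\sigma_1(G)|\prec 1/(N^2t)$ with a soft-edge estimate $|\sigma_N(H_t)-\sigma_N(G)|\prec 1/(Nt)$, and conclude by an elementary algebraic manipulation of the ratio. One small difference is that the paper does not import an external Tracy--Widom comparison for the largest singular value; it reads off the soft-edge bound directly from its own DBM machinery (Lemma~\ref{lem:phi_decay_rough} together with the argument of Proposition~\ref{p.Homogenization}), which is cleaner since that lemma is already available. On the small-$\sigma_1(G)$ obstacle you correctly flag, the paper is actually \emph{less} careful than you: it simply asserts that ``$N\sigma_1(G)$ is of order constant with overwhelming probability'' and proceeds, so your calibration remark is at least as rigorous as what the paper does at that step.
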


\begin{theorem}\label{thm:Condition}
Let $ H $ be an $ N \times N $ random matrix satisfying \eqref{e.Assumption1} and \eqref{e.Assumption2}. For any $ \eps>0 $, we have
\begin{equation}\label{eq:Condition_Number}
\P \pth{\frac{\kappa(G)}{N}>r + N^{-\frac{2}{3}+\eps}} - N^{-\frac{1}{3}-\eps} \leq \P \pth{ \frac{\kappa(H)}{N} > r } \leq \P \pth{\frac{\kappa(G)}{N}>r - N^{-\frac{2}{3}+\eps}} + N^{-\frac{1}{3}-\eps}
\end{equation}
\end{theorem}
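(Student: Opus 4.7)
The plan is to reduce the quantitative universality of the condition number to that of the smallest singular value (Theorem~\ref{t.Real}), by exploiting the sharp concentration of the largest singular value around the soft-edge location $2$. Writing $\kappa(H)/N = \sigma_N(H)/(N\sigma_1(H))$, on the overwhelming-probability event $\sigma_N(H)\approx 2$ the condition $\{\kappa(H)/N > r\}$ is essentially $\{N\sigma_1(H) < 2/r\}$, so one can apply Theorem~\ref{t.Real} at the Tracy--Widom soft-edge scale $N^{-2/3}$. Crucially, only concentration (not quantitative universality) of $\sigma_N$ is needed, because the deterministic leading term $2$ is the same for $H$ and $G$; this is what makes $N^{-2/3+\eps}$, rather than the $N^{-1/2+\eps}$ one might naively expect from Theorem~\ref{t.Real} alone, the right scale for the shift.

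Concretely, I would proceed in four steps. First, invoke standard edge rigidity for sample covariance matrices under \eqref{e.Assumption1}--\eqref{e.Assumption2} to conclude $|\sigma_N(H) - 2| \prec N^{-2/3}$, with the analogous bound for $G$. For fixed $\eps>0$ this makes
\[
\calE := \{|\sigma_N(H) - 2| \leq N^{-2/3+\eps}\} \cap \{|\sigma_N(G) - 2| \leq N^{-2/3+\eps}\}
\]
an event of overwhelming probability. Second, on $\calE$ use the two-sided sandwich
\[
\{N\sigma_1(H) < (2 - N^{-2/3+\eps})/r\} \cap \calE \;\subseteq\; \{\kappa(H)/N > r\} \cap \calE \;\subseteq\; \{N\sigma_1(H) < (2 + N^{-2/3+\eps})/r\},
\]
together with its analogue for $G$. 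Third, apply Theorem~\ref{t.Real} with $\delta$ close to $2/3$ to compare the sandwiching probabilities for $H$ to those for $G$, with an $O(N^{-2/3})$ shift in the threshold and additive probability error of order $N^{-1/3-\eps}$ (after relabeling the free $\eps$-parameter). Fourth, run the sandwich for $G$ in reverse to convert the $N\sigma_1$-thresholds into a shift of order $N^{-2/3+\eps}$ in the argument of $\P(\kappa(G)/N > \cdot)$, producing \eqref{eq:Condition_Number}.

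The main technical input is Theorem~\ref{t.Real}; the remainder is routine bookkeeping. The only delicate point is that for atypically small $N\sigma_1(H)$---equivalently, atypically large $\kappa(H)/N$---a fixed $N^{-2/3+\eps}$ shift in $\sigma_N$ is amplified upon dividing by $N\sigma_1(H)$. I would control this using the bounded density of $N\sigma_1$ near the origin (explicit via Edelman's formula in the Gaussian case, transferable to $H$ through Theorem~\ref{t.Real}), which shows that the contribution from this atypical range is absorbable into the error term $N^{-1/3-\eps}$; in the further extreme regime where $\P(\kappa(G)/N > r)$ is already smaller than $N^{-1/3-\eps}$, the inequality \eqref{eq:Condition_Number} is trivial, so the nontrivial range of $r$ is of moderate size and the amplification stays bounded.
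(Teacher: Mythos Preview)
Your proposal is correct and follows essentially the same route as the paper: both arguments use soft-edge rigidity $|\sigma_N-2|\prec N^{-2/3}$ to sandwich $\{\kappa/N>r\}$ between events of the form $\{N\sigma_1 < (2\pm N^{-2/3+\eps})/r\}$, apply Theorem~\ref{t.Real} with $\delta$ near $2/3$, and then run the sandwich backwards for $G$. Your treatment of the amplification issue when $N\sigma_1$ is atypically small (via bounded density and the observation that the extreme-$r$ range is trivial) is in fact more explicit than the paper's, which handles the same point by simply invoking that $N\sigma_1(G)$ is ``of order constant with overwhelming probability''.
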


\smallskip

As mentioned previously, the general hard edge case $ \lim N/M =1 $ without $ M \equiv N $ is a notoriously difficult problem in random matrix theory. To further generalize our results, we have the following slight extension towards the general case. 

\begin{theorem}\label{thm:General}
Let $ H $ be an $ M \times N $ random matrix satisfying \eqref{e.Assumption1} and \eqref{e.Assumption2} with $ M=N+O(N^{o(1)}) $. Then all results in Theorem \ref{thm:Smoothed_Singular}, Theorem \ref{t.Real}, Theorem \ref{thm:Smoothed_Condition} and Theorem \ref{thm:Condition} are still true.
\end{theorem}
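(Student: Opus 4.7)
My plan is to adapt the arguments used to establish Theorems \ref{thm:Smoothed_Singular}--\ref{thm:Condition} in the square case. The basic observation is that for $M=N+O(N^{o(1)})$, the aspect ratio $d=N/M$ satisfies $d=1-O(N^{-1+o(1)})$, so every random matrix theory input used in the square case still applies at essentially the same quantitative strength. In particular, the Marchenko-Pastur left edge is shifted from $0$ to $(1-\sqrt{d})^2=O(N^{-2+o(1)})$, which at the singular-value scale corresponds to a shift of $O(N^{-1+o(1)})$, negligible against the $N^\eps$ slack in every statement of the theorems.

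First, I would revisit the local Marchenko-Pastur law and the rigidity estimates near the hard edge, checking that the constants depend continuously on $d$ and are stable for $|d-1|=O(N^{-1+o(1)})$; this is already in the covariance-matrix literature. Second, the free-convolution / Dyson Brownian motion analysis used for the smoothed matrix $H+\lambda G$ carries over: the singular-value flow is well defined for rectangular matrices, and the associated free convolution of the Marchenko-Pastur law with a rescaled semicircular component is continuous in $d$, so every estimate changes by a multiplicative factor $1+O(N^{-1+o(1)})$. Third, the Green function comparison between the Gaussian and arbitrary ensembles is structurally identical, since it relies only on moment matching and on Green-function estimates that have been adapted to the rectangular setting. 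Combining these, Theorems \ref{thm:Smoothed_Singular} and \ref{t.Real} extend; Theorems \ref{thm:Smoothed_Condition} and \ref{thm:Condition} then follow by the same reduction as in the square case, since soft-edge (Tracy-Widom) universality for the largest singular value is available for rectangular matrices with aspect ratio close to $1$.

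An alternative, less invasive route is an augmentation/interlacing argument: extend $H$ by $k=M-N=O(N^{o(1)})$ independent Gaussian columns to form a square matrix $\hat H$, apply the square-case theorems to $\hat H$, and transfer the conclusions back to $H$ via Cauchy-type interlacing of singular values, which loses only $k$ positions (absorbed by the $N^\eps$ factors). This works cleanly for Theorems \ref{t.Real} and \ref{thm:Condition}, but is more delicate for Theorem \ref{thm:Smoothed_Singular}, where the target precision $\sqrt{1+\lambda^2}/(N^2\log(1+\lambda^2))$ is very sharp and would need the auxiliary columns to be calibrated to the noise level $\lambda$ so as not to inflate the error.

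The main obstacle is preserving this optimal $N^{-2}$ precision in the smoothed analysis under the shift in aspect ratio. One must check that the small change in the Marchenko-Pastur density near the hard edge, together with the $O(N^{-2+o(1)})$ shift of the edge itself, does not corrupt the leading-order expansion of $\sigma_1(H+\lambda G)-\sqrt{1+\lambda^2}\,\sigma_1(G)$. I expect this to be a careful but mechanical modification of the density/contour-integral computations in the square case, since all inputs vary smoothly in $d$ and the relevant scales ($N^{-1}$ for $\sigma_1$ versus $N^{-1+o(1)}$ for the edge shift) are hierarchically well separated.
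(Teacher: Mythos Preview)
Your main route has a genuine gap: you assume that the optimal hard-edge local law (and the resulting rigidity) for $M\times N$ matrices with $d=N/M\to 1$ is already in the covariance-matrix literature and depends continuously on $d$. The paper explicitly states the opposite --- the optimal local law and rigidity at the hard edge for general $\lim N/M=1$ are \emph{not known} unless $M\equiv N$; this is precisely the ``notoriously hard problem'' the theorem is designed to sidestep. So you cannot simply import those results and check continuity in $d$.

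Your alternative augmentation route is in the right spirit and is essentially what the paper does, but you apply it at the wrong level. You propose to augment $H$ to a square matrix, apply the \emph{final theorems} to the square matrix, and transfer back via singular-value interlacing. You correctly note this is delicate for the sharp $N^{-2}$ precision in Theorem~\ref{thm:Smoothed_Singular}. The paper's observation is cleaner: since every proof in the paper depends only on the local semicircle law for the Girko symmetrization (and its consequence, rigidity), it suffices to transfer the \emph{local law} itself from the augmented square matrix $H_{\mathsf A}$ to $H$. This is done by removing the $M-N$ extra columns one at a time via the resolvent identity
\[
G^{(k)}_{ij}(z)=G^{(k+1)}_{ij}(z)+\frac{G^{(k)}_{i,k+1}(z)\,G^{(k)}_{k+1,j}(z)}{G^{(k)}_{k+1,k+1}(z)},
\]
each removal costing $O(N^{2\eps}/(Ny))$ in the entrywise local law and $O(1/y)$ in the trace (via the Ward identity). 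Summing over $M-N=N^{o(1)}$ removals preserves the optimal bounds, so rigidity holds for $H$ exactly as in the square case, and the entire downstream machinery (DBM analysis, resolvent comparison) runs unchanged. This avoids any loss of precision from interlacing and handles all four theorems uniformly.
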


\subsection{Outline of proofs}
The central idea of this paper is based on the Erd\H{o}s-Schlein-Yau dynamical approach in random matrix theory. In their seminal work \cite{erdos2012local}, the so-called \emph{three-step strategy} is developed to prove universality phenomena for random matrices. Roughly speaking, this framework is the following three ideas. 
\begin{enumerate}[(i)]
\item A priori estimates for spectral statistics. This is based on the analyzing the resolvent of the matrix, and such analysis is called local law in random matrix theory. Local law states that the spectral density converges to the limiting law on microscopic scale. This local law implies the eigenvalue rigidity phenomenon, which states that the eigenvalues are close to their typical locations. Such a priori control of the eigenvalue locations will play a significant role in further analysis.

\item Local relaxation of eigenvalues. This step is designed to prove universality for matrices with a tiny Gaussian component. We perturb the matrix by some independent Gaussian noise, and then under this perturbation, the dynamics of the eigenvalues is governed by the Dyson Brownian motion (DBM). Moreover, the spectral distribution of the Gaussian ensemble is the equilibrium measure of DBM. The ergodicity of DBM results in a fast convergence to the local equilibrium, and hence implies the universality for matrix with small Gaussian perturbation.

\item Density arguments. For any probability distribution of the matrix elements, there exists a distribution with a small Gaussian component (in the sense of Step (ii)) such that the two associated random matrices have asymptotically identical spectral statistics. Typically, such an asymptotic identity is guaranteed by some moment matching conditions and the comparison of resolvents.
\end{enumerate}
For a systematic discussion of this method, we refer to the monograph \cite{erdos2017book}. Following this strategy, our main techniques can also be divided into the following three steps.

\begin{itemize}
\item The first step is the local semicircle law for the symmetrization of the random matrix $ H $. This local law guarantees the optimal rigidity estimates for the singular values. This step is be based on classical works in random matrix theory such as \cite{bloemendal2014isotropic,erdos2014imprimitive,alt2017local}.

\item The second step is to interpolate the general matrix $ H $ with the Gaussian matrix $ G $, and estimate the dynamics of the singular value. More specifically, we consider the interpolation $ H_t = e^{-t/2}H + \sqrt{1-e^{-t}}G $, which solves the matrix Ornstein-Uhlenbeck stochastic differential equation
$$ \d H_t = \frac{1}{\sqrt{N}} \d B_t - \frac{1}{2}H_t \d t. $$
Note that this interpolation $ H_t $ is equivalent to the matrix perturbation in our smoothed analysis. We consider a weighted Stieltjes transform (defined in \eqref{eq:Weighted_Stieltjes}). A key innovation of our work is that, combined with a symmetrization trick, the evolution of the weighted Stieltjes along the dynamics of $ H_t $ satisfies a stochastic PDE that can be well approximated by a deterministic advection equation. This deterministic PDE yields a rough estimate for $ |\sigma_k(H_t)-\sigma_k(G)| $. Finally, using a delicate bootstrap argument, we show that the estimates for $ |\sigma_k(H_t)-\sigma_k(G)| $ are self-improving. Iterating the bootstrap argument to optimal scale, we derive the optimal smoothed analysis for the smallest singular value.

\item The last step is a quantitative resolvent comparison. In particular, the difference between the resolvent of two different random matrices are explicitly controlled in terms of the difference of their fourth moments. This comparison is proved via the Lindeberg exchange method. Together with the optimal smoothed analysis, this comparison theorem establishes the quantitative universality.
\end{itemize}


\subsection{Notations and paper organizations}
Throughout this paper, we denote $ C $ a generic constant which does not depend on any parameter but may vary form line to line. We write $ A \lesssim B $ if $ A \leq CB $ holds for some constant $ C $, and similarly write $ A \gtrsim B $ if $ A \geq C^{-1}B $. We also denote $ A \sim B $ if both $ C^{-1}B \leq A \leq C B $ hold. When $ A $ and $ B $ are complex valued, $ A \sim B $ means $ \re A \sim \re B $ and $ \im A \sim \im B $. We use $ \llbracket A,B \rrbracket := [A,B] \cap \Z $ to denote the set of integers between $ A $ and $ B $. We use $ a \vee b := \max\{a,b\} $ and $ a \wedge b := \min\{a,b\} $ to denote the maximum and minimum between $ a $ and $ b $, respectively.






\smallskip


The paper is organized as follows. In Section \ref{sec:Applications}, we discuss some applications of our results in numerical analysis and algorithms. In Section \ref{s.Dynamics}, we discuss the smoothed analysis for the smallest singular value of random matrices via the study of singular value dynamics. In Section \ref{s.Green}, we use the smoothed analysis to establish a full quantitative universality for the smallest singular value.  In Section \ref{sec:Condition}, we use the results on the smallest singular value to derive smoothed analysis and quantitative universality for the condition number. In Section \ref{sec:Non_Square}, we extend the result for square matrices to a slightly more general non-square case. Finally, in the Appendix, we collect some auxiliary results and provide the deferred technical proofs.


\section{Applications in Numerical Analysis and Algorithms}\label{sec:Applications}
In this section, we discuss some applications of our results in numerical analysis and algorithms. There are numerous circumstances where the smallest singular value and condition number play important roles. We do not intend to mention all of them and just focus on two simple scenarios in the framework of solving linear systems to illustrate the usefulness of our results. We expect our results can be applied in more complicated models and more advanced algorithms.

\subsection{Accuracy of least-square solution}
Consider the linear least-square optimization
$$ \min_{x \in \R^N} \|Ax-b\|_2^2, $$ 
where $ A $ is an $ M \times N $  matrix satisfying $ M-N=N^{o(1)} $, and $ b \in \R^N $ is a fixed vector. The loss of precision of this problem, denoted by $ \mathrm{LoP}(A,b) $, is the number of correct digits in the entries of the data $ (A,b) $ minus the same quantity for the computed solution. Let $ \mathrm{LoP}(A) $ denote the loss of precision for the worst $ b $. Then, as shown in \cite{higham2002accuracy}, we have
$$ \mathrm{LoP}(A) = \log M N^{3/2} + 2 \log \kappa(A) + O(1). $$
Let $ H $ be an $ M \times N $ random matrix satisfying \eqref{e.Assumption1} and \eqref{e.Assumption2}. Also let $ G $ be an $ M \times N $ Gaussian matrix. By Theorem \ref{thm:Smoothed_Condition} and Theorem \ref{thm:General}, for any $ \eps>0 $, with overwhelming probability we have
$$ \mathrm{LoP}(H+\lambda G) \leq \log M N^{3/2} +2 \log \kappa(G) + N^{-1+\eps} \frac{1}{ \log (1+\lambda^2)} + O(1). $$
Also, using Theorem \ref{thm:Condition} and \ref{thm:General}, for any $ \eps>0 $, with probability at least $ 1-N^{-1/3-\eps} $, we have
$$ \mathrm{LoP}(H) \leq \log M N^{3/2} + 2 \log \kappa(G) + N^{-\frac{2}{3}+\eps} + O(1) $$
The error terms are smaller than the $ O(1) $ term. These results imply that a general random matrix and its Gaussian perturbation can ensure accuracy as good as in the Gaussian case.

\subsection{Complexity of conjugate gradient method}
Consider the linear equation
$$ A^\top A x=c, $$
where $ A $ is an $ M \times N $ matrix with $ M-N = N^{o(1)} $, and $ c \in \R^N $ is a fixed vector. This linear system can be solved via the conjugate gradient algorithm. Let $ T_\delta(A) $ denote the needed iterations to obtain an $ \delta $-approximation of the true solution in the worst case. Then it is known (see e.g. \cite{trefethen1997numerical}) that
$$ T_\delta(A) = \frac{1}{2} \kappa(A) \delta. $$
Let $ H $ be an $ M \times N $ random matrix satisfying \eqref{e.Assumption1} and \eqref{e.Assumption2}. Also let $ G $ be an $ M \times N $ Gaussian matrix. By Theorem \ref{thm:Smoothed_Condition} and Theorem \ref{thm:General}, for any $ \eps>0 $, with overwhelming probability we have
$$ \abs{T_\delta(H+\lambda G) - T_\delta(G)} \leq  \frac{ \delta N^\eps}{\log (1+\lambda^2)} \lesssim N^\eps \frac{\delta}{\lambda^2} $$
This shows that as long as $ \lambda^2 \gg \delta $, the Gaussian perturbation $ H+\lambda G $ has time complexity as good as the Gaussian ensemble.

Similarly, using Theorem \ref{thm:Condition} and Theorem \ref{thm:General}, for any $ \eps>0 $, with probability at least $ 1-N^{-1/3-\eps} $, we have
$$ T_\delta(H) \leq T_\delta(G) + \delta N^{1/3 + \eps}. $$
This shows that as long as the required accuracy satisfies $ \delta \ll N^{-1/3} $, the time complexity for a general random matrix is as good as the Gaussian ensemble.

\section{Smoothed Analysis and Gaussian Approximation}\label{s.Dynamics}
\subsection{Singular value dynamics}

In smoothed analysis, we are interested in matrix perturbation of the form $ H+\lambda G $. After normalization of the variance, it is equivalent to study matrix of the form $ H_t=e^{-t/2}H + \sqrt{1-e^{-t}}G  $. More specifically, we have
\begin{equation}\label{eq:Correspondence}
H+\lambda G = \sqrt{1+\lambda^2} \pth{\frac{1}{\sqrt{1+\lambda^2}}H + \frac{\lambda}{\sqrt{1+\lambda^2}}G  } = \sqrt{1+\lambda^2} H_{\log(1+\lambda^2)}. 
\end{equation}
Let $ B $ be an $ N \times N $ matrix Brownian motion, i.e. $ B_{ij} $ are independent standard Brownian motions. Then the evolution of $ H_t $ is governed by the following matrix-valued Ornstein-Uhlenbeck process:
\begin{equation}\label{e.MatrixDBM}
\d H_t = \dfrac{1}{\sqrt{N}}\d B_t - \dfrac{1}{2} H_t \d t.
\end{equation}
Let $ \{s_k(t)\}_{k=1}^N $ denote the singular values of $ H_t $, then $ \{s_k(t)\}_{k=1}^N $ satisfy the following system of stochastic differential equations \cite[equation (5.8)]{erdos2012local},
\begin{equation}\label{e.SingularValueDBM}
\d s_k = \dfrac{\d B_k}{\sqrt{N}} + \left[ -\dfrac{1}{2}s_k +\dfrac{1}{2N} \sum_{\ell \neq k} \left( \dfrac{1}{s_k - s_\ell} + \dfrac{1}{s_k + s_\ell} \right) \right]\d t, \ \ \ 1 \leq k \leq N.
\end{equation}
To handle these SDEs, an important idea is the following symmetrization trick (see \cite[equation (3.9)]{che2019universality}):
$$ s_{-i}(t)=-s_i(t),\ \ \ B_{-i}(t)=-B_i(t),\ \ \forall t \geq 0,\ 1 \leq i \leq N. $$
With these notations, we label the indices from $ -1 $ to $ -N $ and $ 1 $ to $ N $, so that the zero index is omitted. Unless otherwise stated, this will be the convention and we will not emphasize it explicitly in the following parts of the paper. After symmetrization, for the real case we have
\begin{equation}\label{e.SymDBM}
\d s_k = \dfrac{\d B_k}{\sqrt{N}} + \left[ -\dfrac{1}{2}s_k +\dfrac{1}{2N} \sum_{\ell \neq \pm k} \dfrac{1}{s_k - s_\ell} \right]\d t, \ \ \ -N \leq k \leq N, k \neq 0.
\end{equation}

Now we use the coupling method introduced in \cite{landon2019fixed} to analyze these dynamics. Consider the interpotation between a general matrix $ H $ and a Gaussian matrix $ G $. Let $ \{\sigma_k(H)\}_{k=-N}^N $ and $ \{\sigma_k(G)\}_{k=-N}^N $ be the (symmetrized) singular values of $ H $ and $ G $, respectively. For $ \nu \in [0,1] $, define
$$ s_k^{(\nu)}(0) = (1-\nu) \sigma_k(H) + \nu \sigma_k(G). $$
With this initial condition, we denote the unique solution of \eqref{e.SymDBM} by $ \{s_k^{(\nu)}(t)\} $. Also, let $ \{\sigma_k(H,t)\} $ and $ \{\sigma_k(G,t)\} $ denote the solutions of \eqref{e.SymDBM} with initial conditions $ \{\sigma_k(H)\} $ and $ \{\sigma_k(G)\} $, respectively.

It is well known that the empirical measure of the eigenvalues of $ H^*H $ converges to the Marchenko-Pastur distribution
$$ \rho_{\MP}(x) = \dfrac{1}{2\pi}\sqrt{\dfrac{4-x}{x}}, $$
For $ 1 \leq k \leq N $, we define the typical position of the singular value $ \sigma_k $ as the quantile $ \gamma_k $ satisfying
$$  \int_{-\infty}^{\gamma_k^2} \rho_{\MP}(x)\d x = \dfrac{k}{N}. $$
We also define $ \gamma_{-k} = -\gamma_k $. By a change of variable, we have
\begin{equation}\label{e.typical}
\int_{-\infty}^{\gamma_k} \rho_\sc(x)\d x=\dfrac{N+k}{2N}, \ \ \ \int_{-\infty}^{\gamma_{-k}} \rho_\sc(x)\d x = \dfrac{N-k}{2N},
\end{equation}
where $ \rho_\sc(x) = \tfrac{1}{2\pi}\sqrt{(4-x^2)_+} $ is the semicircle law.

An important input of our proof is the following uniform rigidity estimates. For any fixed $ \eps>0 $, consider the set of good trajectories
\begin{equation}\label{e.Rigidity}
\A_\eps =\left\{ \left| s_k^{(\nu)}(t) - \gamma_k \right|<  N^{-\frac{2}{3}+\eps}(N+1-|k|)^{-\frac{1}{3}}\  \mbox{for all}\ 0 \leq t \leq 1, -N \leq k \leq N,0 \leq \nu \leq 1 \right\}.
\end{equation}
Such rigidity estimates for fixed $ t $ and $ \nu=0 $ or $ 1 $ were proved in \cite{erdos2014imprimitive,alt2017local,bloemendal2014isotropic,bourgade2014circular,cacciapuoti2013local}.The extension to uniform estimates in parameters can be done by a discretization argument: (1) discretize in $ t $ and $ \nu $; (2) use weyl's inequality to control increments over small time intervals; (3) use a maximum principle for the derivative with respect to the $ \nu $ parameter (see Lemma \ref{l.MaxPrin}) to control increments in small $ \nu $-intervals. 
As a consequence, we have

\begin{lemma}\label{l.Rigidity}
For any $ \eps>0 $, the event $ \A_\eps $ happens with overwhelming probability, i.e. for any $ D>0 $, there exists $ N_0(\eps,D) $ such that for any $ N>N_0 $ we have
$$ \P(\A_\eps)>1-N^{-D}. $$
\end{lemma}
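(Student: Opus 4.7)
The plan is to follow the three-step discretization outlined after the statement. The starting ingredient is the pointwise version of the rigidity estimate: for each fixed pair $(t,\nu)\in[0,1]^2$, the bound $|s_k^{(\nu)}(t)-\gamma_k|\prec N^{-2/3}(N+1-|k|)^{-1/3}$ for all $-N\le k\le N$ is the standard edge and bulk rigidity for the symmetrized Wishart (equivalently the Marchenko-Pastur hard-edge) model, and is proved in \cite{erdos2014imprimitive,alt2017local,bloemendal2014isotropic,bourgade2014circular,cacciapuoti2013local}. Since the singular values of $H_t^{(\nu)}$ have the same distribution as those obtained by first forming the matrix $(1-\nu)H + \nu G$ and then running the matrix OU flow \eqref{e.MatrixDBM}, and since $(1-\nu)H+\nu G$ again satisfies \eqref{e.Assumption1}-\eqref{e.Assumption2} (with constants uniform in $\nu\in[0,1]$), these estimates apply uniformly.

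First I would fix a large constant $K>0$ (depending on $\eps,D$) and discretize both parameters at scale $N^{-K}$, producing a grid $\mathcal{G}\subset[0,1]^2$ with $|\mathcal{G}|\lesssim N^{2K}$. Applying the pointwise rigidity at each grid node together with a union bound, the event
\[
\bigl|s_k^{(\nu)}(t)-\gamma_k\bigr|\le \tfrac{1}{2}N^{-2/3+\eps}(N+1-|k|)^{-1/3}\qquad\forall\,k,\ \forall\,(t,\nu)\in\mathcal{G}
\]
holds with probability at least $1-N^{-D}$ (after inflating $D$ to absorb the factor $N^{2K}$). It then remains to transfer the estimate from $\mathcal{G}$ to all of $[0,1]^2$ with only a loss of, say, $N^{-2/3+\eps/2}(N+1-|k|)^{-1/3}$.

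For the $t$-direction increment, I would use Weyl's inequality: $|s_k^{(\nu)}(t_1)-s_k^{(\nu)}(t_2)|\le \|H_{t_1}^{(\nu)}-H_{t_2}^{(\nu)}\|_{\mathrm{op}}$. From \eqref{e.MatrixDBM} the operator-norm increment is bounded by $\tfrac{1}{\sqrt N}\|B_{t_1}-B_{t_2}\|_{\mathrm{op}}+\tfrac{1}{2}\int_{t_1}^{t_2}\|H_s^{(\nu)}\|_{\mathrm{op}}\,\d s$. The first term is controlled, with overwhelming probability, by standard Gaussian matrix concentration at scale $\sqrt{|t_1-t_2|}\lesssim N^{-K/2}$, and the second by the a priori operator-norm bound $\|H_s^{(\nu)}\|_{\mathrm{op}}\prec 1$ that follows from the grid-level rigidity at $k=\pm N$. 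Choosing $K$ large enough makes these increments much smaller than the required $N^{-2/3+\eps/2}(N+1-|k|)^{-1/3}$ slack.

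For the $\nu$-direction increment, the trajectories are coupled through the same Brownian motion, so $\partial_\nu s_k^{(\nu)}(t)$ satisfies a linear ODE whose coefficients are the negative squared differences $-\tfrac{1}{2N}(s_k^{(\nu)}-s_\ell^{(\nu)})^{-2}$ appearing in \eqref{e.SymDBM}; this produces a maximum principle of the type recorded in Lemma \ref{l.MaxPrin} below, giving
\[
\max_k\bigl|\partial_\nu s_k^{(\nu)}(t)\bigr|\le \max_k\bigl|\partial_\nu s_k^{(\nu)}(0)\bigr|=\max_k\bigl|\sigma_k(G)-\sigma_k(H)\bigr|\prec 1,
\]
where the last bound again uses the pointwise rigidity for both endpoints. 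Consequently the $\nu$-increment across a grid cell of width $N^{-K}$ is $\prec N^{-K}$, again negligible. Combining the three ingredients upgrades pointwise rigidity to the uniform event $\A_\eps$.

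The genuinely delicate step is the maximum-principle bound on $\partial_\nu s_k^{(\nu)}(t)$: one needs to argue that the symmetrized SDE system \eqref{e.SymDBM} has enough smoothness in the initial data that no pair of trajectories collides (so the derivative stays finite) and that the dissipative structure of the coefficient matrix is what produces contraction rather than growth. The other two ingredients are routine once the grid scale $N^{-K}$ is taken large relative to $1/\eps$ and $D$.
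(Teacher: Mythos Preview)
Your three-step outline (grid, Weyl in $t$, maximum principle in $\nu$) matches the paper's sketch. The gap is in your first paragraph: you assert that for each fixed $(t,\nu)$ the process $\{s_k^{(\nu)}(t)\}$ coincides in law with the singular values of the matrix OU flow started from $(1-\nu)H+\nu G$, and then invoke matrix-model rigidity. This identification is false. By construction $s_k^{(\nu)}(0)=(1-\nu)\sigma_k(H)+\nu\sigma_k(G)$ is a convex combination of the \emph{singular values}, not $\sigma_k\bigl((1-\nu)H+\nu G\bigr)$; the two differ unless $H$ and $G$ share singular vectors. (Incidentally, $(1-\nu)H+\nu G$ has entry variance $((1-\nu)^2+\nu^2)/N\ne 1/N$, so it would not satisfy \eqref{e.Assumption1} anyway.) For $\nu\in(0,1)$ there is simply no underlying matrix, so neither the cited matrix-rigidity theorems nor your Weyl step $|s_k^{(\nu)}(t_1)-s_k^{(\nu)}(t_2)|\le\|H_{t_1}^{(\nu)}-H_{t_2}^{(\nu)}\|_{\mathrm{op}}$ make sense at those $\nu$.

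The gap is not cosmetic, because the maximum principle alone cannot carry the bulk scale from $\nu=0$ to interior $\nu$: it only gives $\sup_k|\partial_\nu s_k^{(\nu)}(t)|\le\sup_\ell|\sigma_\ell(H)-\sigma_\ell(G)|\prec N^{-2/3}$ (dominated by the soft edge), so integrating over $\nu\in[0,1]$ produces an error of order $N^{-2/3+\eps}$, far larger than the bulk requirement $N^{-1+\eps}$ in $\A_\eps$. What is actually needed at interior $\nu$ is the separate input that the abstract DBM \eqref{e.SymDBM}, started from data already obeying the rigidity profile, preserves that profile for $t\in[0,1]$; this follows from the local-law analysis of DBM itself (in the style of Landon--Yau), not from any matrix model. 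With that in hand your grid argument goes through, though the $t$-increments at interior $\nu$ must then be controlled directly from the SDE \eqref{e.SymDBM} (bounding drift and martingale over an interval of length $N^{-K}$) rather than via Weyl.
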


We consider
$$ \varphi_k^{(\nu)}(t) := e^{\frac{t}{2}} \dfrac{\d}{\d \nu} s_k^{(\nu)}(t). $$
For the simplicity of notations, we omit the parameter $ \nu $ if the context is clear. Then $ \varphi_k $ satisfies the following non-local parabolic type equation.
\begin{equation}\label{e.Parabolic}
\dfrac{\d}{\d t}\varphi_k = \dfrac{1}{2N} \sum_{\ell \neq \pm k} \dfrac{\varphi_\ell - \varphi_k}{\left( s_\ell - s_k \right)^2}
\end{equation}
Let $ \psi_k=\psi_k^{(\nu)} $ solve the same equation as $ \varphi_k $ in \eqref{e.Parabolic} but with initial condition $ \psi_k(0)=|\varphi_k(0)|=|\sigma_k(H)-\sigma_k(G)| $. Following the same arguments in \cite[Lemma 3.1]{wang2019quantitative}, this equation satisfies a maximum principle.

\begin{lemma}\label{l.MaxPrin}
For all $ t \geq 0 $ and $ -N \leq k \leq N $, we have
$$ \psi_k(t)=\psi_{-k}(t),\ \ \ \psi_k(t) \geq 0, \ \ \ |\psi_k(t)| \leq \max_{-N \leq \ell \leq N} |\psi_\ell(0)|,\ \ \ |\varphi_k(t)| \leq \psi_k(t). $$
\end{lemma}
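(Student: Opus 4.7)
The plan is to treat \eqref{e.Parabolic} as a linear ODE system of Markov-generator type and invoke the standard discrete maximum-principle arguments, following closely the template of \cite[Lemma 3.1]{wang2019quantitative}. Writing $a_{k\ell}(t) := \frac{1}{2N(s_k(t)-s_\ell(t))^2}\ge 0$ for $\ell\ne \pm k$, the equation reads $\partial_t \varphi_k = \sum_{\ell\neq\pm k} a_{k\ell}(t)(\varphi_\ell - \varphi_k)$, and $\psi_k$ satisfies the same equation with initial data $\psi_k(0)=|\varphi_k(0)|$. All four assertions will be consequences of this conservative, non-negative off-diagonal structure combined with the symmetry $s_{-j}=-s_j$.

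For the symmetry $\psi_{-k}=\psi_k$, the key observation is that $s_{-j}(t)=-s_j(t)$ yields $a_{-k,-\ell}=a_{k,\ell}$. I would set $\tilde{\psi}_k(t):=\psi_{-k}(t)$ and, after reindexing the sum via $\ell\mapsto-\ell$, verify that $\tilde\psi$ solves exactly the same equation as $\psi$. Since $\tilde\psi_k(0) = |\sigma_{-k}(H)-\sigma_{-k}(G)| = |\sigma_k(H)-\sigma_k(G)| = \psi_k(0)$, uniqueness of solutions to the linear ODE forces $\tilde\psi\equiv\psi$, giving $\psi_{-k}=\psi_k$.

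For the non-negativity and the $L^\infty$ bound on $\psi$, I would apply the discrete minimum/maximum principle. If $m(t):=\min_k\psi_k(t)$ is attained at index $k^\ast$ at time $t$, then $\psi_\ell(t)-\psi_{k^\ast}(t)\ge 0$ for all $\ell$, so $\partial_t\psi_{k^\ast}(t)\ge 0$; hence $m(t)$ is non-decreasing and $\psi_k(t)\ge m(0)=0$. Applied to $M(t):=\max_k\psi_k(t)$, the symmetric argument yields $M(t)\le M(0)=\max_\ell|\psi_\ell(0)|$. Finally, for $|\varphi_k|\le\psi_k$, I would set $u_k^\pm:=\psi_k\pm\varphi_k$; each satisfies the same linear equation \eqref{e.Parabolic} with initial data $|\varphi_k(0)|\pm\varphi_k(0)\ge 0$, so the non-negativity just established applies to $u_k^\pm$ and gives $-\psi_k\le\varphi_k\le\psi_k$.

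The only mildly delicate point is that the coefficients $a_{k\ell}$ are singular at times where two singular values collide, and the $\min$/$\max$ in the index $k$ is not smooth in $t$. On the almost-sure event that the $s_k$'s remain distinct along the trajectory (a standard non-collision property of the symmetrized DBM \eqref{e.SymDBM}, which can also be enforced with overwhelming probability on the rigidity event $\A_\eps$ of Lemma \ref{l.Rigidity}), the coefficients are bounded and the system is a classical linear ODE, so the extremum of $\psi_k$ over the finite index set is piecewise smooth and the one-sided derivative bounds above are rigorous. Alternatively one may regularize by adding $\delta>0$ in the denominator and pass to the limit $\delta\to 0$. This is the only nontrivial point, but it is routine, so I do not anticipate a genuine obstacle.
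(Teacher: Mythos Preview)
Your proposal is correct and follows exactly the approach the paper intends: the paper does not give its own proof but defers to \cite[Lemma~3.1]{wang2019quantitative}, and the discrete maximum/minimum principle argument you outline---non-negative off-diagonal generator, symmetry via $s_{-j}=-s_j$, and the comparison $u_k^\pm=\psi_k\pm\varphi_k$---is precisely that argument. Your remarks on the regularity issue (non-collision or a $\delta$-regularization of the denominator) are the standard way to make the one-sided derivative bounds rigorous and match the level of detail in the reference.
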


We consider the following weighted Stieltjes transform
\begin{equation}\label{eq:Weighted_Stieltjes}
\frakS_t(z) := e^{-\frac{t}{2}} \sum_{-N \leq k \leq N} \dfrac{\varphi_k(t)}{s_k(t) - z},\ \ \ \tilde{\frakS}_t(z) := e^{-\frac{t}{2}} \sum_{-N \leq k \leq N} \dfrac{\psi_k(t)}{s_k(t) - z}.
\end{equation}
Let $ S_t(z) $ and $ m_{\sc}(z) $ denote the Stieltjes transforms of the empirical measure for the singular values and of the semicircle law
$$ S_t(z)=\dfrac{1}{2N}\sum_{-N \leq k \leq N}\dfrac{1}{s_k -z},\ \ \ m_{\sc}(z)=\dfrac{-z+\sqrt{z^2-4}}{2}. $$
A well-known result in random matrix theory is the following local semicircle law for the Stieltjes transform $ S_t(z) $. Let $ \omega>0 $ be an arbitrarily fixed constant. Define the spectral domain
$$ \mathbf{S}=\mathbf{S}_\omega := \sth{z=E+\i \eta: |E| \leq \omega^{-1},N^{-1+\omega} \leq \eta \leq \omega^{-1}}. $$
For any $ \omega>0 $ and $ z \in \mathbf{S}_\omega $, it was shown in \cite{erdos2014imprimitive,alt2017local} that
\begin{equation}\label{eq:Local_SC_Law}
\abs{S_t(z) - m_\sc(z)} \prec \frac{1}{N\eta}.    
\end{equation}

As computed in \cite[Lemma 3.3]{wang2019quantitative}, a key result is that $ \frakS_t(z) $ and $  \tilde{\frakS}_t(z) $ satisfy the following stochastic advection equation.

\begin{lemma}\label{l.Dynamics}
For $ \im z \neq 0 $, we have
\begin{equation}\label{e.AdvectionEqn}
\begin{aligned}
\d \tilde{\frakS}_t &=\left(S_t(z)+\dfrac{z}{2}\right)(\partial_z \tilde{\frakS}_t)\d t+\dfrac{1}{4N} (\partial_{zz} \tilde{\frakS}_t) \d t+\left[\dfrac{e^{-\frac{t}{2}}}{2N}\sum_{-N \leq k \leq N} \dfrac{\psi_k}{(s_k-z)^2(s_k+z)}\right]\d t\\
& \quad -\dfrac{e^{-\frac{t}{2}}}{\sqrt{N}}\sum_{-N \leq k \leq N}\dfrac{\psi_k}{(s_k-z)^2}\d B_k.
\end{aligned}
\end{equation}
\end{lemma}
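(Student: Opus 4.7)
The plan is to apply Itô's formula directly to the definition of $\tilde{\frakS}_t$ and then collect terms. Using the symmetries $\psi_{-k}=\psi_k$, $s_{-k}=-s_k$ and the convention $B_{-k}=-B_k$, I would regard $\tilde{\frakS}_t$ as a smooth function of the $N$ independent processes $(s_1,\dots,s_N)$ (driven by Brownian noise via \eqref{e.SymDBM}, with $d\langle s_k\rangle_t = N^{-1}dt$) and of the pathwise-deterministic auxiliary processes $(\psi_1,\dots,\psi_N)$ evolving via \eqref{e.Parabolic}. Itô's formula then produces five sources of increments: a $-\tfrac12\tilde{\frakS}_t\,dt$ piece from differentiating $e^{-t/2}$; the martingale piece from the $dB_k$ part of $ds_k$; an Itô correction from $d\langle s_k\rangle_t$; a linear drift from the $-\tfrac12 s_k$ part of $ds_k$; and two double-sum interaction drifts, one from the repulsion term $\tfrac{1}{2N}\sum_{\ell\ne\pm k}(s_k-s_\ell)^{-1}$ in $ds_k$, the other from the right-hand side of \eqref{e.Parabolic}.

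Three of these are routine: the martingale piece gives the $dB_k$ term in \eqref{e.AdvectionEqn} once one uses $dB_{-k}=-dB_k$ to fold the sum over $-N\le k\le N$, $k\ne 0$; the linear drift combined with the $-\tfrac12\tilde{\frakS}_t$ piece yields $\tfrac{z}{2}(\partial_z\tilde{\frakS}_t)\,dt$ via the identity $s_k=(s_k-z)+z$; and the Itô correction, after combining with the diagonal contribution from the interaction sums (see below), reduces to $\tfrac{1}{4N}(\partial_{zz}\tilde{\frakS}_t)$. The central computation is the combination of the two interaction double sums
\[\frac{e^{-t/2}}{2N}\sum_{k\ne\pm\ell}\frac{\psi_\ell-\psi_k}{(s_k-z)(s_\ell-s_k)^2}\quad\text{and}\quad -\frac{e^{-t/2}}{2N}\sum_{k\ne\pm\ell}\frac{\psi_k}{(s_k-z)^2(s_k-s_\ell)}.\]
I would symmetrize the first under $k\leftrightarrow\ell$ and apply the partial-fraction identity $(s_k-z)^{-1}(s_\ell-z)^{-1}=(s_k-s_\ell)^{-1}[(s_\ell-z)^{-1}-(s_k-z)^{-1}]$, together with the analogous decomposition of $(s_k-z)^{-2}(s_k-s_\ell)^{-1}$ in the variable $s_k$; both double sums then collapse into products of single Stieltjes-type sums. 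The dominant piece is $S_t(z)(\partial_z\tilde{\frakS}_t)$, matching the first drift term in \eqref{e.AdvectionEqn}. The remainder is a pair of boundary contributions: the diagonal $\ell=k$ terms, which combine with the Itô correction to produce $\tfrac{1}{4N}(\partial_{zz}\tilde{\frakS}_t)$, and the mirror $\ell=-k$ terms that are excluded by the constraint $\ell\ne\pm k$; when the missing $\ell=-k$ contributions are added back via $s_{-k}=-s_k$, the resulting factor $(s_\ell-z)^{-1}|_{\ell=-k}=-(s_k+z)^{-1}$ is exactly what generates the extra term $\tfrac{e^{-t/2}}{2N}\sum_{k}\psi_k/[(s_k-z)^2(s_k+z)]$ appearing in the lemma.

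The main obstacle is the careful bookkeeping of these diagonal and mirror boundary terms together with the consistent factors of $2$ arising from the symmetrization between indices $\{1,\dots,N\}$ and $\{-N,\dots,-1\}$; every misplaced factor propagates into a genuine change in the deterministic PDE driving the bootstrap of Section \ref{s.Dynamics}. Once this accounting is done, all five types of contributions line up to yield \eqref{e.AdvectionEqn}. The overall structure is the singular-value analogue of the Wigner computation in \cite[Lemma 3.3]{wang2019quantitative}, with the symmetrization trick of \cite{che2019universality} serving to unify the indexing across positive and negative singular-value labels.
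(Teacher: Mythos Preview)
Your proposal is correct and follows exactly the approach the paper intends: the paper does not give its own proof here but simply cites \cite[Lemma~3.3]{wang2019quantitative}, and your It\^o-formula computation with the $k\leftrightarrow\ell$ symmetrization, the partial-fraction collapse to $S_t(z)\partial_z\tilde{\frakS}_t$, and the identification of the diagonal ($\ell=k$) and mirror ($\ell=-k$) boundary corrections is precisely that calculation adapted to the symmetrized singular-value dynamics. In particular, your bookkeeping that the raw It\^o correction $\tfrac{1}{2N}\partial_{zz}\tilde{\frakS}_t$ combines with the diagonal subtraction $-\tfrac{1}{4N}\partial_{zz}\tilde{\frakS}_t$ to give the stated $\tfrac{1}{4N}\partial_{zz}\tilde{\frakS}_t$, and that the excluded $\ell=-k$ term produces the $(s_k+z)^{-1}$ factor, is exactly right.
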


Based on the local semicircle law \eqref{eq:Local_SC_Law}, we expect that this stochastic differential equation can be approximated by the deterministic advection PDE
\begin{equation}\label{e.DeterministicPDE}
\partial_t h = \dfrac{\sqrt{z^2-4}}{2} \partial_z h.
\end{equation}
The above PDE have the following explicit characteristics
\begin{equation}\label{e.Characteristic}
z_t = \dfrac{e^{\frac{t}{2}}\left( z+\sqrt{z^2-4} \right) + e^{-\frac{t}{2}}\left( z-\sqrt{z^2-4} \right)}{2}.
\end{equation}
This implies $ \tilde{\frakS}_t(z) \approx \tilde{\frakS}_0(z_t) $, and we will justify this approximation in the next subsection. Moreover, we remark that $ \frakS_t $ satisfies the same equation \eqref{e.AdvectionEqn} with $ \psi_k $ replaced by $ \varphi_k $.

Before moving to the main estimates, we first collect some basic results, including the geometry of the characteristics and a rough estimate for the initial condition. These can be proved via direct computations and the details can be found in \cite[Section 2]{bourgade2021extreme}.

Let $ \xi(z)=\min\{|z-2|,|z+2|\} $. For any $ \eps>0 $, we consider the curve and the domain
$$ \S_\eps=\left\{ E+\i \eta : -2+ N^{-\frac{2}{3}+4\eps}<E<2- N^{-\frac{2}{3}+4\eps},\eta=N^{-1+4\eps}\xi(E)^{-\frac{1}{2}} \right\},\ \ \mathscr{R}_\eps = \bigcup_{0 \leq t \leq 1}\{z_t:z \in \S_\eps\}. $$
We also define $ a(z)=\dist(z,[-2,2]) $ and $ b(z)=\dist(z,[-2,2]^c) $.

\begin{lemma}\label{l.Geometry}
Uniformly in $ 0<t<1 $ and $ z=z_0 $ satisfying $ \eta=\im z>0 $ and $ |z-2|<\tfrac{1}{10} $, we have
$$ \re (z_t-z_0) \sim t\dfrac{a(z)}{\xi(z)^{1/2}}+t^2,\ \ \ \im (z_t-z_0) \sim t\dfrac{b(z)}{\xi(z)^{1/2}}. $$
In particular, for $ \eps>0 $, if $ z \in \S_\eps $, then $ z_t-z_0 \sim (t N^{-1+4\eps} \xi(E)^{-1} +t^2) + \i \xi(E)^{1/2}t $.

Moreover, for any $ \xi>0 $, and $ z=E+\i \eta \in [-2+\xi,2-\xi] \times [0,\xi^{-1}] $, we have $ \im (z_t-z_0) \sim t $.
\end{lemma}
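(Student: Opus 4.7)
The plan is to reduce everything to a Taylor expansion of the explicit formula \eqref{e.Characteristic} combined with an elementary analysis of $\sqrt{z^2-4}$ near the edge $z=2$ (the case $|z+2|<1/10$ being symmetric). Collecting the exponentials,
\[ z_t - z_0 \;=\; z\bigl(\cosh(t/2)-1\bigr) + \sqrt{z^2-4}\,\sinh(t/2), \]
so for $0<t<1$ the prefactors are positive with $\cosh(t/2)-1\sim t^2$ and $\sinh(t/2)\sim t$. Since $|z-2|<1/10$ forces $|z|\sim 1$ and $\re z >0$, the first summand already contributes a strictly positive $O(t^2)$ term to the real part and an $O(\eta t^2)$ term to the imaginary part; everything else reduces to understanding $\sqrt{z^2-4}$.

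The core step is to prove $\re\sqrt{z^2-4}\sim a(z)/\sqrt{\xi(z)}$ and $\im\sqrt{z^2-4}\sim b(z)/\sqrt{\xi(z)}$ on $\{|z-2|<1/10\}$. I would factor $\sqrt{z^2-4}=\sqrt{z-2}\sqrt{z+2}$ and note that $\sqrt{z+2}$ is a small perturbation of the constant $2$, with imaginary part of order $\eta$, so the scaling is driven by $\sqrt{z-2}$ alone. The half-angle formulas
\[ \re\sqrt{z-2}\;=\;\sqrt{\tfrac{|z-2|+(E-2)}{2}},\qquad \im\sqrt{z-2}\;=\;\sqrt{\tfrac{|z-2|-(E-2)}{2}}, \]
together with the factorization $|z-2|^2-(E-2)^2=\eta^2$, make this a short case split on the sign of $E-2$ and on whether $\eta\gtrsim |E-2|$ or $\eta\ll|E-2|$; in each of the four resulting subregimes the two expressions simplify directly to the claimed scalings, using that $\xi(z)=|z-2|$ on this neighbourhood.

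Assembling the pieces, the imaginary part of $z_t-z_0$ is dominated by $\sinh(t/2)\im\sqrt{z^2-4}\sim tb(z)/\sqrt{\xi(z)}$: the parasitic term $\eta(\cosh(t/2)-1)=O(\eta t^2)$ is absorbed using $b(z)\ge\eta$ (a direct consequence of the definition of $b$) and $\sqrt{\xi(z)}\le 1$. The real part is a sum of two nonnegative contributions of sizes $t^2$ and $ta(z)/\sqrt{\xi(z)}$, hence is equivalent to their sum. The specialization to $z\in\S_\eps$ is then immediate by substituting $a(z)=\eta=N^{-1+4\eps}\xi(E)^{-1/2}$ and $b(z)\sim\xi(E)$. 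For the last claim, when $E\in[-2+\xi,2-\xi]$ and $\eta\in[0,\xi^{-1}]$, one has $\re(z^2-4)\le-3\xi$ and $|z^2-4|\le C(\xi)$, so the branch-appropriate square root satisfies $\im\sqrt{z^2-4}\sim_\xi 1$; since the parasitic $\eta(\cosh(t/2)-1)=O_\xi(t^2)$ is of lower order, $\im(z_t-z_0)\sim t$ follows with $\xi$-dependent implicit constants.

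The main obstacle is the bookkeeping in the case analysis of $\sqrt{z-2}$: across the four subregimes one must verify both lower and upper bounds with the claimed scalings, which is routine but must be done subcase-by-subcase since genuinely two-sided estimates are needed. Everything else—the Taylor expansion, the reduction to $\sqrt{z-2}$, and the final substitutions into $a,b,\xi$—is a one-line computation.
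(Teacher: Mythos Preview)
Your approach is correct and is precisely the ``direct computation'' the paper has in mind; in fact the paper does not give its own proof at all but simply refers to \cite[Section~2]{bourgade2021extreme}. Your decomposition $z_t-z_0=z(\cosh(t/2)-1)+\sqrt{z^2-4}\,\sinh(t/2)$, the reduction to the half-angle analysis of $\sqrt{z-2}$, and the absorption of the cross terms and the $\eta t^2$ parasite are exactly the standard route. One small remark: your sentence on the ``in particular'' clause tacitly assumes $|z-2|<1/10$, whereas $\S_\eps$ contains bulk points as well; for those you should combine your final bulk estimate $\im\sqrt{z^2-4}\sim_\xi 1$ with the analogous (and easier) observation $\re\sqrt{z^2-4}\sim \eta$ there, after which the substitution $\eta=N^{-1+4\eps}\xi(E)^{-1/2}$ and $\xi(E)\sim 1$ gives the claimed form.
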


\begin{lemma}\label{l.InitialCondition}
Let $ \eps>0 $ be any small constant. In the set $ \A_\eps $, for any $ z =E+\i\eta \in \mathscr{R}_\eps $, we have $ \im \tilde{\frakS}_0(z) \lesssim N^\eps $ if $ \eta > \max(E-2,-E-2) $, and $ \im\tilde{\frakS}_0(z) \lesssim N^\eps {\xi(z)^{-1}}{\eta} $ otherwise. The same bounds also hold for $ |\im \frakS_0| $.
\end{lemma}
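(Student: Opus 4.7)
The plan is to combine the rigidity estimates on $\A_\eps$ with the classical edge spacing of the semicircle quantiles, so as to dominate the weighted measure $\sum_k \psi_k(0)\,\delta_{s_k(0)}$ by $N^\eps$ times an approximately uniform counting measure on $[-2,2]$. Once this is done, $\im\tilde{\frakS}_0(z)$ is controlled by a Poisson-type integral over $[-2,2]$, and the two regimes of the lemma correspond to whether $\eta$ dominates the distance from $E$ to the spectrum $[-2,2]$ or not.

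Concretely, on $\A_\eps$ the triangle inequality gives $\psi_k(0) \le 2N^{-2/3+\eps}(N+1-|k|)^{-1/3}$, and the same rigidity bound controls $|s_k(0)-\gamma_k|$. Comparing with the standard quantile spacing $\gamma_{k+1}-\gamma_{k-1} \asymp N^{-2/3}(N+1-|k|)^{-1/3}$ yields the key pointwise bound $\psi_k(0) \lesssim N^\eps (\gamma_{k+1}-\gamma_{k-1})$. Since the lower bound $\eta \ge N^{-1+4\eps}\xi(E)^{-1/2}$ on the initial curve $\S_\eps$ propagates along characteristics to a lower bound on $\eta$ throughout $\mathscr R_\eps$ that exceeds both the rigidity scale and the local spacing (via Lemma~\ref{l.Geometry}), I may replace $s_k(0)$ by $\gamma_k$ in the denominators at negligible cost and approximate the sum by its Riemann integral, obtaining
\begin{equation*}
\im \tilde{\frakS}_0(z) = \sum_k \frac{\psi_k(0)\,\eta}{(s_k(0)-E)^2+\eta^2} \,\lesssim\, N^\eps \int_{-2}^{2}\frac{\eta\,dx}{(x-E)^2+\eta^2}.
\end{equation*}
Two elementary cases then close the argument. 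When $\eta \ge \max(E-2,-E-2)$ (either $E$ is in the bulk, or $|E|>2$ with $\eta$ larger than the gap to the spectrum), the integrand is dominated by the full-line Poisson kernel, whose integral is $\pi$; this gives the first bound $\im\tilde{\frakS}_0(z)\lesssim N^\eps$. When $|E|>2$ and $\eta < \xi(z) = |E|-2$, one has $(x-E)^2\ge\xi(z)^2\gg\eta^2$ for all $x\in[-2,2]$, so the integral is bounded by $\eta\int_{-2}^2 dx/(x-E)^2 \lesssim \eta/\xi(z)$, giving the second bound. Finally, $|\im \frakS_0(z)|\le\im\tilde{\frakS}_0(z)$ holds term-by-term because $s_k(0)\in\R$ and $|\varphi_k(0)|=\psi_k(0)$, so the same estimates transfer automatically.

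The main technical point is the uniformity in $z\in\mathscr R_\eps$ of the sum-to-integral passage. Near the hard edge, $\eta$ can be as small as $N^{-2/3+2\eps}$, only marginally larger than the local spacing $\asymp N^{-2/3}$; the $N^{4\eps}$ slack built into the definition of $\S_\eps$ is precisely what is needed to make the replacement $s_k(0)\mapsto\gamma_k$ and the Riemann-sum comparison legitimate up to a further $N^\eps$-loss, which is absorbed into the final bound. At the other extreme, for $z = z_t$ with $t$ of order one, $\im z_t$ can be $O(1)$; there the crude estimate $|\tilde{\frakS}_0(z)|\le\eta^{-1}\sum_k\psi_k(0)\lesssim N^\eps/\eta$, using $\sum_k\psi_k(0)\lesssim N^\eps$ obtained by summing the rigidity bound in $k$, already produces a stronger estimate than the lemma claims. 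Both regimes are straightforward once the rigidity input from Lemma~\ref{l.Rigidity} and the characteristic geometry of Lemma~\ref{l.Geometry} are in place.
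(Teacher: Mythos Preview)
Your argument is correct and follows the standard route for such an estimate: rigidity gives $\psi_k(0)\lesssim N^\eps(\gamma_{k+1}-\gamma_{k-1})$, and then the weighted sum is dominated by $N^\eps$ times a Riemann sum for the Poisson integral $\int_{-2}^2\eta\,dx/((x-E)^2+\eta^2)$, whose evaluation in the two regimes of the lemma is elementary. The paper itself does not supply a proof of this lemma, deferring instead to \cite[Section~2]{bourgade2021extreme}, where essentially the same rigidity-plus-Riemann-sum computation is carried out; so your approach matches what the cited reference does.

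Two minor remarks. First, in your penultimate paragraph you write ``near the hard edge, $\eta$ can be as small as $N^{-2/3+2\eps}$'' and compare with spacing $\asymp N^{-2/3}$; these numbers belong to the \emph{soft} edge (near $|E|=2$), not the hard edge. At the hard edge $E\approx 0$ one has $\eta\asymp N^{-1+4\eps}$ and spacing $\asymp N^{-1}$. The slack is $N^{4\eps}$ in both cases, so your conclusion is unaffected, but the labels are swapped. Second, to make the Riemann-sum passage fully rigorous one should observe that the problematic large-spacing indices $|k|$ near $N$ only occur where $|\gamma_k-E|$ is bounded below (either of order one when $E$ is in the bulk, or of order $\xi(E)$ near the soft edge), so the Poisson kernel is already small there and the replacement $s_k(0)\to\gamma_k$ costs nothing; a dyadic decomposition in $|\gamma_k-E|$ handles this cleanly. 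You allude to this but do not spell it out.
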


A key ingredient of our proof is the following a priori estimate for $ \tilde{\frakS}_t $, whose proof is deferred to Appendix \ref{app:Apriori}.
\begin{proposition}\label{p.aPriori}


Let $ \eps>0 $ be any small constant. Uniformly for all $ 0<t<1 $ and $ z=E+\i \eta \in \S_\eps $, with overwhelming probability we have
$$ \im \tilde{\frakS}_t(z) \lesssim N^{2\eps} \dfrac{\xi(E)^{1/2}}{\left( \xi(E)^{1/2} \vee t \right)}. $$


\end{proposition}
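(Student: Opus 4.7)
The strategy is the method of characteristics applied to the stochastic advection equation \eqref{e.AdvectionEqn}, closed by a bootstrap argument against the local semicircle law \eqref{eq:Local_SC_Law}. Fix a target point $z \in \S_\eps$ and a time $t \in (0,1)$, and set $w_s := z_{t-s}$ for $s \in [0,t]$, so that $w_0 = z_t$ and $w_t = z$. Differentiating \eqref{e.Characteristic} gives $\dot w_s = -(m_\sc(w_s) + w_s/2)$, and Lemma \ref{l.Geometry} yields $\im w_s \sim \im z + (t-s)\xi(E)^{1/2}$, so $\im w_s$ strictly exceeds $\im z$ for $s<t$.

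Combining the chain rule with \eqref{e.AdvectionEqn}, the leading transport coefficient $(S_s(w_s) + w_s/2)\partial_z \tilde{\frakS}_s(w_s)$ partially cancels against the evaluation-point drift $\dot w_s \partial_z \tilde{\frakS}_s(w_s) = -(m_\sc(w_s) + w_s/2)\partial_z \tilde{\frakS}_s(w_s)$. Integrating in $s$ yields
\[
\tilde{\frakS}_t(z) - \tilde{\frakS}_0(z_t) = \int_0^t [S_s(w_s) - m_\sc(w_s)]\,\partial_z \tilde{\frakS}_s(w_s)\,ds + \int_0^t \frac{\partial_{zz} \tilde{\frakS}_s(w_s)}{4N}\,ds + \int_0^t R_s\,ds + \mathcal M_t,
\]
where $R_s$ is the symmetric correction from \eqref{e.AdvectionEqn}, which becomes a lower-order term after folding it against its reflected counterpart via $\psi_{-k} = \psi_k$ (Lemma \ref{l.MaxPrin}), and $\mathcal M_t$ is the martingale arising from the stochastic integrand.

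Each contribution is estimated on the rigidity event $\A_\eps$ from Lemma \ref{l.Rigidity}. The transported initial term satisfies the required bound directly from Lemma \ref{l.InitialCondition} applied at $z_t$, whose imaginary part $\sim \eta + t\xi(E)^{1/2}$ naturally produces the split $t \lesssim \xi(E)^{1/2}$ versus $t \gtrsim \xi(E)^{1/2}$ in the claim. For the local-law error, \eqref{eq:Local_SC_Law} gives $|S_s - m_\sc| \prec (N\im w_s)^{-1}$, and the Ward-type identity $|\partial_z \tilde{\frakS}_s(w_s)| \leq \im \tilde{\frakS}_s(w_s)/\im w_s$ (valid since $\psi_k \geq 0$ by Lemma \ref{l.MaxPrin}) reduces this contribution to a time integral of $\im \tilde{\frakS}_s/(N(\im w_s)^2)$; the Laplacian piece loses a further factor of $(N\im w_s)^{-1}$. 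For the martingale bracket
\[
[\mathcal M]_t = \int_0^t \frac{e^{-s}}{N}\sum_k \frac{\psi_k^2}{|s_k - w_s|^4}\,ds,
\]
bound one factor of $\psi_k$ by $\max_k \psi_k \prec N^{-2/3}$ (Lemma \ref{l.MaxPrin} and rigidity), use $|s_k - w_s|^{-4} \leq (\im w_s)^{-2}|s_k - w_s|^{-2}$, and invoke Ward once more to obtain $[\mathcal M]_t \prec \int_0^t N^{-5/3}\,\im \tilde{\frakS}_s(w_s)/(\im w_s)^3\,ds$; BDG then upgrades this to high-probability control of $|\mathcal M_t|$.

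The closing step is a bootstrap. Starting from the coarse a priori bound $\im \tilde{\frakS}_s(w) \prec N^{1/3}$, which follows from $\max_k \psi_k \prec N^{-2/3}$ together with $\sum_k \im w/|s_k-w|^2 \lesssim N$, substitute into the error estimates to extract an improved bound of the target form $N^{2\eps}\xi(E)^{1/2}/(\xi(E)^{1/2} \vee (t-s))$. Iterate on a polynomial grid in $(s,z)$ covered by a union bound plus Lipschitz continuity in $z$ of $\tilde{\frakS}_s$. The main obstacle is the circularity of the martingale variance in the quantity to be controlled; this is overcome by the characteristic geometry, since $\im w_s$ strictly exceeds $\im z$ for $s<t$, so each cycle of the bootstrap feeds on the previous iterate at a strictly better spectral scale, and the estimate self-improves to the sharp exponent in $O(\log N)$ rounds.
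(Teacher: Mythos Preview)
Your overall architecture---evaluate $\tilde{\frakS}$ along the backward characteristic $w_s=z_{t-s}$, so that the advection part cancels and only the fluctuation terms survive---matches the paper's. The drift terms (local-law error, Laplacian, symmetric correction) are handled essentially as you describe, using $|\partial_z\tilde{\frakS}_s|\le \im\tilde{\frakS}_s/\im w_s$ and the stopping-time hypothesis.

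The gap is in your martingale estimate. Pulling out one factor of $\psi_k$ by the global maximum $\max_k\psi_k\prec N^{-2/3}$ gives
\[
[\mathcal M]_t \;\prec\; N^{-5/3}\int_0^t \frac{\im\tilde{\frakS}_s(w_s)}{(\im w_s)^3}\,ds,
\]
and with $\im w_s\sim \eta+(t-s)\xi(E)^{1/2}$, $\eta=N^{-1+4\eps}\xi(E)^{-1/2}$, this integral is $\sim N^{2-8\eps}\xi(E)^{1/2}\cdot(\text{current bound on }\im\tilde{\frakS})$. Even after BDG and iteration this leaves a residual power $N^{1/3}$ that never disappears; the bootstrap you describe does not converge to the target. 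Concretely, substituting the target bound itself, $[\mathcal M]_t\prec N^{1/3-6\eps}\xi(E)/(\xi(E)^{1/2}\vee t)$, which is larger than $N^{4\eps}\xi(E)/(\xi(E)^{1/2}\vee t)^2$ whenever $\xi(E)^{1/2}\vee t\gg N^{-1/3}$, i.e.\ anywhere away from the soft edge.

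What the paper does instead is close the loop through the \emph{pointwise} $\psi_k$ bound, not the global maximum. The stopping time is taken simultaneously over all grid points $z^{(m)}\in\S_\eps$ and all terminal times $t_\ell$, so before $\tau$ one has $\im\tilde{\frakS}_u(z')\lesssim N^{2\eps}\xi(E')^{1/2}/(\xi(E')^{1/2}\vee u)$ for every $z'=E'+\i\eta'\in\S_\eps$ and every $u$. Inverting this as in Lemma~\ref{lem:phi_decay_rough} gives $\psi_k(u)\lesssim N^{-1+C\eps}/(\xi(\gamma_k)^{1/2}\vee u)$, and the martingale bracket is then controlled by a chopping argument: partition $\llbracket -N,N\rrbracket$ into windows of length $N^{4\eps}$, bound $\sum_{k\in I_j}\psi_k(u)$ and $\max_{k\in I_j}\psi_k(u)$ using the above, and reduce to an integral of $|z_{t-u}-x|^{-4}(\xi(x)\vee u^2)^{-1}$ against $\rho_\sc$. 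This yields $[\mathcal M]_t\lesssim N^{\eps}\xi(E)/(\xi(E)\vee t^2)$, exactly the square of the target. Your proposal is missing this feedback step from $\im\tilde{\frakS}$ back to the pointwise $\psi_k$ scale; without it the quadratic variation is too large by a fixed power of $N$ and the argument does not close.
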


This estimate yields a rough control for the decay of $ \varphi_k(t) $, which will be an important input for more refined estimates.

\begin{lemma}\label{lem:phi_decay_rough}


For all $ -N \leq k \leq N $ and $ 0\leq t \leq 1 $, we have
\begin{equation}\label{eq:phi_decay_rough}
|\varphi_k(t)| \prec \frac{1}{N} \dfrac{1}{\left((\frac{N+1-|k|}{N})^{1/3} \vee t\right)}
\end{equation}
\end{lemma}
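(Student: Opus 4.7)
The plan is to convert the integral estimate of Proposition \ref{p.aPriori} on $\im \tilde{\frakS}_t$ into a pointwise estimate on $\psi_k(t)$ (and hence on $\varphi_k(t)$ via Lemma \ref{l.MaxPrin}). The key mechanism is the positivity $\psi_\ell(t) \geq 0$ from Lemma \ref{l.MaxPrin}, which makes the spectral representation
\[
\im \tilde{\frakS}_t(E+\i\eta) = e^{-t/2} \sum_{\ell} \frac{\psi_\ell(t)\,\eta}{(s_\ell(t) - E)^2 + \eta^2}
\]
a sum of nonnegative terms. Retaining only the $\ell = k$ contribution yields the one-sided bound
\[
\psi_k(t) \leq e^{t/2}\cdot \frac{(s_k(t) - E)^2 + \eta^2}{\eta}\,\im \tilde{\frakS}_t(E+\i\eta).
\]

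First I would apply this with $E = \gamma_k$ and $\eta = N^{-1+4\eps}\xi(\gamma_k)^{-1/2}$, so that $\gamma_k + \i\eta \in \S_\eps$; this is legitimate as long as $N+1-|k| \geq N^{6\eps}$. Since $\xi(\gamma_k)^{1/2} \sim ((N+1-|k|)/N)^{1/3}$, one computes $\eta \sim N^{-2/3+4\eps}(N+1-|k|)^{-1/3}$, which matches (up to the $\eps$ loss) the rigidity scale $|s_k(t) - \gamma_k| \lesssim N^{-2/3+\eps}(N+1-|k|)^{-1/3}$ guaranteed by Lemma \ref{l.Rigidity} on the event $\A_\eps$. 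Hence $(s_k(t) - \gamma_k)^2 + \eta^2 \lesssim \eta^2$, and plugging in Proposition \ref{p.aPriori} yields
\[
\psi_k(t) \lesssim \eta\, N^{2\eps}\,\frac{\xi(\gamma_k)^{1/2}}{\xi(\gamma_k)^{1/2} \vee t} \sim \frac{N^{6\eps}}{N}\,\frac{1}{((N+1-|k|)/N)^{1/3} \vee t},
\]
which is the desired bound modulo the arbitrary $N^{6\eps}$ factor absorbed by $\prec$ after relabelling $\eps$.

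For the remaining indices with $N+1-|k| < N^{6\eps}$ (the very edge, where $\gamma_k$ is too close to $\pm 2$ to lie in $\S_\eps$), I would invoke the second part of Lemma \ref{l.MaxPrin}, namely $\psi_k(t) \leq \max_\ell \psi_\ell(0) = \max_\ell |\sigma_\ell(H) - \sigma_\ell(G)|$, and bound the right-hand side by $\lesssim N^{-2/3+\eps}$ using the triangle inequality and Lemma \ref{l.Rigidity} applied separately to $H$ and to $G$. In this range of $k$ the target $\tfrac{1}{N}(((N+1-|k|)/N)^{1/3} \vee t)^{-1}$ is already $\gtrsim N^{-2/3-O(\eps)}$, so this trivial control is enough.

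The main obstacle has really been absorbed by Proposition \ref{p.aPriori}, so the present lemma is essentially a conversion argument. The delicate point is that the a priori bound holds only on $\S_\eps$, which forces the split above into a bulk--soft-edge regime (handled via the Stieltjes transform inversion) and a tiny boundary regime (handled via the maximum principle). Keeping track of the various $\eps$'s and shrinking them at the end is what ensures the final loss stays within the tolerance of stochastic domination.
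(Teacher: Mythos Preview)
Your proposal is correct and follows essentially the same approach as the paper: both use the positivity of $\psi_\ell(t)$ to invert $\im \tilde{\frakS}_t$ into a pointwise bound on $\psi_k(t)$, then plug in $z=\gamma_k+\i N^{-1+4\eps}\xi(\gamma_k)^{-1/2}\in\S_\eps$ together with rigidity and Proposition~\ref{p.aPriori}. The only minor difference is in the boundary regime $N+1-|k|\lesssim N^{O(\eps)}$: the paper keeps the same Stieltjes-inversion argument but shifts to a nearby anchor point $\gamma_{k_0}$ still lying in $\S_\eps$, whereas you invoke the maximum principle $\psi_k(t)\le\max_\ell\psi_\ell(0)\lesssim N^{-2/3+\eps}$ directly; both are valid and equally short.
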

\begin{proof}
By Lemma \ref{l.MaxPrin}, it suffices to control $ \psi_k(t) $. 
By the nonnegativity of $ \psi_k(t) $, we have
$$ \im \tilde{\frakS}_t(z) = \sum_{-N \leq k \leq N} \frac{\psi_k(t) \im z}{|s_k(t)-z|^2} \geq \psi_k(t) \frac{\im z}{|s_k(t)-z|^2}, $$
which implies
$$ \psi_k(t) \leq \im \tilde{\frakS}_t(z)\dfrac{|s_k(t)-z|^2}{\im z}. $$
Let $ \eps>0 $. For $ (N+1-|k|)>N^{10 \eps} $, pick the point $ z=\gamma_k + \i {N^{-1+4\eps} \xi(\gamma_k)^{-1/2}} \in \S_\eps $. In this case we have $ {\xi(\gamma_k)}^{1/2} \sim (\tfrac{N+1-|k|}{N})^{1/3} $. Therefore, in the set $ \A_\eps $, by Proposition \ref{p.aPriori}, uniformly for all $ -N+N^{10\eps} \leq k \leq N-N^{10\eps} $ and $ 0 \leq t \leq 1 $, with overwhelming probability we have
\begin{equation*}
|\psi_k(t)| < \dfrac{N^{8\eps}}{N}\dfrac{1}{\left((\frac{N+1-|k|}{N})^{1/3} \vee t\right)}.
\end{equation*}
For $ (N+1-|k|) \leq N^{10 \eps} $, without loss of generality we consider $ N+1-k \leq N^{10 \eps} $. In this case, let $ k_0=N-N^{10\eps}+1 $ and consider $ z=\gamma_{k_0} + \i {N^{-1+4\eps} \xi(\gamma_{k_0})^{-1/2}} $. The same argument results in a similar bound with a larger $ N^{20 \eps} $ factor. By the arbitrariness of $ \eps $, this completes the proof.
\end{proof}


\subsection{Local relaxation at the hard edge}
In this subsection we prove a quantitative estimate for the local relaxation flow \eqref{e.SymDBM} at the hard edge. The main estimate in this section is the following. We remark that Theorem \ref{t.Relaxation} is equivalent to Theorem \ref{thm:Smoothed_Singular} using the rescaling \eqref{eq:Correspondence}.

%

\begin{theorem}\label{t.Relaxation}
For $ \eps_0>0 $ arbitrarily small and any $ N^{-1+\eps_0}<t<1 $, we have
\begin{equation}\label{e.Relaxation}
|\sigma_1(H,t)-\sigma_1(G,t)| \prec \dfrac{1}{N^2 t}.
\end{equation}
\end{theorem}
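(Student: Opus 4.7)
The plan is to combine the method of characteristics for the weighted Stieltjes transform $\tilde{\frakS}_t$ introduced in \eqref{eq:Weighted_Stieltjes} with an iterative bootstrap that sharpens the rough decay estimate in Lemma \ref{lem:phi_decay_rough} by an additional factor of $N$ at the hard edge. First I would reduce the problem to a pointwise bound on $\psi_1^{(\nu)}(t)$: since $s_1^{(0)}(t) = \sigma_1(H,t)$ and $s_1^{(1)}(t) = \sigma_1(G,t)$, the fundamental theorem of calculus gives
$$|\sigma_1(H,t)-\sigma_1(G,t)| \leq e^{-t/2} \int_0^1 |\varphi_1^{(\nu)}(t)|\, d\nu \leq \sup_\nu \psi_1^{(\nu)}(t),$$
where the last inequality is Lemma \ref{l.MaxPrin}. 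Hence it suffices to prove $\psi_1^{(\nu)}(t) \prec 1/(N^2 t)$ uniformly in $\nu$, and all arguments below are uniform in $\nu$.

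To access $\psi_1$ through $\tilde{\frakS}_t$, I would use the one-term lower bound
$$\psi_1(t) \leq e^{t/2}\, \frac{|s_1(t)-w|^2}{\im w}\, \im \tilde{\frakS}_t(w)$$
valid for any $w$ in the upper half-plane. Choosing $w = \gamma_1 + \i \eta$ with $\eta$ slightly above $N^{-1}$, the rigidity estimate in Lemma \ref{l.Rigidity} gives $|s_1(t)-w|^2/\im w \prec N^{-1}$, so the target reduces to $\im \tilde{\frakS}_t(w) \prec 1/(Nt)$. To obtain this I would integrate \eqref{e.AdvectionEqn} along the characteristic $z_s$ from \eqref{e.Characteristic}, replace $S_s(z)+z/2$ by $m_\sc(z)+z/2 = \sqrt{z^2-4}/2$ using the local semicircle law \eqref{eq:Local_SC_Law}, and arrive at
$$\tilde{\frakS}_t(z_t) = \tilde{\frakS}_0(z_0) + \int_0^t \mathcal{E}_s(z_s)\, ds - \int_0^t \frac{e^{-s/2}}{\sqrt N} \sum_{k} \frac{\psi_k(s)}{(s_k(s)-z_s)^2}\, dB_k(s),$$
where $\mathcal{E}_s$ collects the error from the local law together with the lower-order drifts in \eqref{e.AdvectionEqn}. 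The initial value is bounded by Lemma \ref{l.InitialCondition}, the deterministic integral is controlled via Proposition \ref{p.aPriori}, and the martingale has quadratic variation of order $N^{-1}\int_0^t \sum_k \psi_k(s)^2 / |s_k(s)-z_s|^4\, ds$.

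The bootstrap then closes as follows. Starting from the rough input $\psi_k(s) \prec 1/(N\,(((N+1-|k|)/N)^{1/3}\vee s))$ of Lemma \ref{lem:phi_decay_rough}, substituting into the martingale quadratic variation and combining with the deterministic estimates yields an improved bound on $\im \tilde{\frakS}_t(w)$, which through the extraction inequality translates into a strictly smaller bound on $\psi_k$ for every $k$ simultaneously. A bounded number of iterations then drives $\psi_1(t)$ down to the target scale $1/(N^2 t)$, the minimal scale set by the martingale fluctuations at the hard edge balanced against the relaxation time $t$. The hardest step is executing this bootstrap at the smallest admissible evaluation scale $\eta \sim N^{-1}$, which lies below the a priori domain $\mathbf{S}_\omega$ used in Proposition \ref{p.aPriori}; this requires inverting the characteristic flow so that the backward trajectory from $w$ stays in the admissible regime throughout $0 \leq s \leq t$ by the geometric estimates of Lemma \ref{l.Geometry}, and verifying that each bootstrap iteration is genuinely contractive at the hard edge rather than merely tightening polylogarithmic factors.
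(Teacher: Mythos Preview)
There is a genuine gap. Your reduction passes through the inequality $|\varphi_1^{(\nu)}(t)|\le\psi_1^{(\nu)}(t)$ and then aims to prove $\psi_1^{(\nu)}(t)\prec 1/(N^2 t)$. This target is not attainable: the parabolic flow \eqref{e.Parabolic} conserves $\sum_k\psi_k$, and by rigidity $\sum_k\psi_k(0)=\sum_k|\sigma_k(H)-\sigma_k(G)|$ is of order one with overwhelming probability. Since each $\psi_k(t)\ge 0$ and the flow homogenizes, the long-time pointwise scale for $\psi_k$ is $N^{-1}$, not $N^{-2}$. Equivalently, the quantity you aim for, $\im\tilde{\frakS}_t(w)\prec 1/(Nt)$ at the hard edge, is false: along the backward characteristic you have $\im\tilde{\frakS}_t(w)\approx\im\tilde{\frakS}_0(z_t)$ with $\im z_t\sim t$, and Lemma \ref{l.InitialCondition} together with the positivity of the summands shows this initial value is of order $N^{o(1)}$, not $1/(Nt)$. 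No bootstrap on $\psi_k$ or $\tilde{\frakS}$ can beat the initial data.

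What your proposal misses is the cancellation mechanism specific to the hard edge. The symmetrization forces $\varphi_{-k}=-\varphi_k$, but $\psi_{-k}=\psi_k$; passing to $\psi$ destroys exactly the sign structure that produces the extra factor of $N$. The paper exploits this by introducing the deterministic proxy $\uu_k(t)$ in \eqref{eq:Def_hatphi} and running the bootstrap on the \emph{difference} $\varphi_k-\uu_k$ (Hypothesis $\calH_\alpha$ and Lemma \ref{l.Bootstrap}), proving $|\varphi_k-\uu_k|\prec 1/(N^2 t)$ via short-range approximation and finite-speed-of-propagation estimates. The smallness of $\uu_1$ itself then comes for free from the antisymmetry $\uu_{-k}=-\uu_k$ together with the Lipschitz bound \eqref{e.GapUbar}, giving $|\uu_1|=\tfrac12|\uu_1-\uu_{-1}|\prec 1/(N^2 t)$; see Lemma \ref{l.SizeUbar}. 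Neither this proxy nor the symmetry argument appears in your outline, and without them the extra factor of $N$ cannot be obtained.
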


To estimate $ \varphi_k(t) $ near the hard edge, we introduce the following quantity to approximate it. Let $ \gamma_k^t=(\gamma_k)_t $ with the convention $ \gamma^t=(\gamma+\i 0^+)_t $, and define
\begin{equation}\label{eq:Def_hatphi}
\uu_k(t) := \dfrac{1}{2N \im m_{\sc}(\gamma_k^t)} \sum_{-N \leq j \leq N} \im \left( \dfrac{1}{\gamma_j - \gamma_k^t} \right) (\sigma_j(H)-\sigma_j(G)).
\end{equation}

Our goal is to prove the following estimates

\begin{proposition}\label{p.Homogenization}
Let $ 0<c<1 $ be a fixed small constant. For $ \eps_0>0 $ arbitrarily small with any $ N^{-1+\eps_0}<t<1 $ and $ k \in \llbracket (c-1)N,(1-c)N \rrbracket $, we have
$$ |\sigma_k(H,t)-\sigma_k(G,t)-\uu_k(t)| \prec \dfrac{1}{N^2 t}. $$
\end{proposition}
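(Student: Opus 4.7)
The plan is to compare the weighted Stieltjes transform $\frakS_t(z)$ to the solution of the deterministic advection equation \eqref{e.DeterministicPDE} along its characteristics \eqref{e.Characteristic}, invert the result at the microscopic scale to recover the individual $\varphi_k^{(\nu)}(t)$, and identify the outcome with $-e^{t/2}\uu_k(t)$ uniformly in $\nu \in [0,1]$ up to error $\prec 1/(N^2 t)$. Since $\varphi_j^{(\nu)}(0) = \sigma_j(G) - \sigma_j(H)$ is independent of $\nu$ and $\sigma_k(G,t) - \sigma_k(H,t) = e^{-t/2}\int_0^1 \varphi_k^{(\nu)}(t)\,d\nu$, integrating the pointwise-in-$\nu$ estimate will yield the proposition.

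For the transport estimate, write the drift in \eqref{e.AdvectionEqn} as
\begin{equation*}
\big(S_t(z) + \tfrac{z}{2}\big)\partial_z\frakS_t = \tfrac{1}{2}\sqrt{z^2-4}\,\partial_z\frakS_t + (S_t - m_\sc)\partial_z\frakS_t.
\end{equation*}
The first term generates the transport along $z_s$, while the second is an error controlled by the local semicircle law \eqref{eq:Local_SC_Law}. Applying It\^o's formula along the characteristic gives
\begin{equation*}
\frakS_t(z_t) - \frakS_0(z) = \int_0^t \mathcal{E}_s(z_s)\,ds + M_t,
\end{equation*}
where $\mathcal{E}_s$ collects $(S_s - m_\sc)\partial_z\frakS_s$, the regularization $\tfrac{1}{4N}\partial_{zz}\frakS_s$, and the lower-order sum from \eqref{e.AdvectionEqn}, and $M_t$ is the resulting It\^o martingale. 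For $z = \gamma_k + i\eta$ with $k$ in the bulk range and $\eta$ slightly above $N^{-1}$, Lemma \ref{l.Geometry} yields $\im z_s \sim s$, so combining \eqref{eq:Local_SC_Law}, Proposition \ref{p.aPriori}, and the rigidity $\A_\eps$ bounds the integrated deterministic error at the required scale; the martingale variance is controlled by BDG with the same inputs.

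Next, invert the Stieltjes transform at $\eta \sim N^{-1+\eps_0}$: the kernel $\eta/|s_j(t)-\gamma_k|^2$ concentrates near $j=k$ and averages nearby $\varphi_j$'s in a window of width $\sim 1/N$. Since the candidate $\uu_k(t)$ is Lipschitz in $k$ with constant $\lesssim 1/(Nt)$---a regularity inherited from $\im \gamma_k^t \sim t$ (Lemma \ref{l.Geometry}), which makes the Poisson-type kernel $\im(1/(\gamma_j-\gamma_k^t))$ smooth in $k$---this averaging equals $\pi\rho_\sc(\gamma_k) e^{-t/2}\varphi_k^{(\nu)}(t)$ up to error $\prec 1/(N^2 t)$. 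Substituting the transport estimate, expanding $\frakS_0(\gamma_k^t) = \sum_j \varphi_j(0)/(s_j^{(\nu)}(0) - \gamma_k^t)$, and replacing $s_j^{(\nu)}(0)$ by $\gamma_j$ via rigidity yields exactly the formula \eqref{eq:Def_hatphi} after cancelling the normalization $\pi \rho_\sc(\gamma_k)/\im m_\sc(\gamma_k^t)$. A bootstrap iterates this scheme: each refined estimate of $|\varphi_k(t)|$ sharpens the a priori bound on $\tilde\frakS_t$, which in turn improves both the deterministic and stochastic errors of the transport estimate, and after a bounded number of iterations the bound saturates at $1/(N^2 t)$. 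The main obstacle is this inversion step---individual $\varphi_k(t)$ is not directly visible to $\frakS_t$ at scale $\eta \sim N^{-1}$, so one must quantitatively exploit the microscopic smoothness of $\uu_k$ in $k$, leveraging the geometry of the characteristic flow $\gamma_k \mapsto \gamma_k^t$ throughout.
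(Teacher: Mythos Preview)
Your proposal captures the broad architecture correctly---the transport estimate along characteristics (which is exactly the paper's Lemma~\ref{l.ImprovedLocalAverage}) and the bootstrap structure (the paper's $\calH_\alpha \Rightarrow \calH_{3\alpha/4}$)---but the inversion step as you describe it does not close. Under the hypothesis $\calH_\alpha$ you only know $|\varphi_j^{(\nu)}(t)-\uu_j(t)| \prec (Nt)^\alpha/(N^2 t)$, so the best you can say about the local average at scale $\eta$ is
\[
\frac{\im \frakS_t(z)}{2N\,\im S_t(z)} \;=\; \varphi_k^{(\nu)}(t) \;+\; O\!\Big(\frac{(Nt)^\alpha}{N^2 t}\Big) \;+\; O\!\Big(\frac{\eta}{Nt}\Big),
\]
where the first error comes from replacing each $\varphi_j$ by $\uu_j$ and back, and the second from the Lipschitz bound on $\uu_j$. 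The $(Nt)^\alpha/(N^2 t)$ term is exactly the input hypothesis, so combining with the transport estimate returns $|\varphi_k-\uu_k| \prec (Nt)^\alpha/(N^2 t)$ again with no gain. The smoothness of $\uu_k$ in $k$ helps only with the part of the averaging error coming from $\uu_j-\uu_k$; it does nothing for the part coming from $\varphi_j-\uu_j$, which is the quantity you are trying to improve.

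The paper overcomes this by abandoning direct Stieltjes inversion and instead working with the parabolic equation \eqref{e.Parabolic} for $\varphi_k$. It introduces a short-range/long-range splitting $\calP=\calP_{\mathrm{short}}+\calP_{\mathrm{long}}$ with cutoff $l$, proves finite speed of propagation for $\Us$, and replaces $\varphi$ by a flattened/averaged version $\Gamma=\Us(u,t)\,\av\varphi(u)$ that agrees with $\varphi$ near $k$ but equals the constant $\uu_k(t)$ far away. A maximum principle applied to $M(v)=\max_i(\Gamma_i(v)-\uu_k(t))$ then produces a genuine dissipative term $-\eta^{-1}M(v)$, and it is this dissipation---not smoothness of $\uu$---that forces the improvement. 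After optimizing over the four scales $r,l,\eta,t-u$ one gets $\calH_{3\alpha/4}$. Your sketch would need to supply an analogous mechanism; as written, the bootstrap stalls at the first iterate.
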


To obtain the optimal control for the local relaxation flow, we need to carefully estimate $ \uu_k $ near the hard edge. A first step towards such estimates is given in the following lemma.

\begin{lemma}\label{l.ApproxUbar}
Let $ \eps>0 $ and $ 0<c<1 $. For any $ (k,\ell) \in \llbracket (c-1)N,(1-c)N \rrbracket ^2 $, $ |E|<2-c $, and $ s,t,\eta \in [N^{-1+4\eps},1] $, in the set $ \A_\eps $, for $ z=E+\i\eta $ we have
\begin{equation}\label{e.GapUbar}
\left| \uu_k(t) - \uu_\ell(s) \right| \lesssim N^{\eps} \left( \dfrac{|k-\ell|}{N^2 (s \wedge t)}+\dfrac{|s-t|}{N (s \wedge t)} \right),
\end{equation}
\begin{equation}\label{e.ApproxUbar}
\left| \dfrac{\im \frakS_0(z_t)}{2N \im S_0(z_t)} - \uu_\ell(s) \right| \lesssim N^{\eps} \left( \dfrac{|E-\gamma_\ell|}{N (s \wedge (\eta+t))} + \dfrac{|\eta+t-s|}{N (s \wedge (\eta+t))} \right).
\end{equation}
\end{lemma}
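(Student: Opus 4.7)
The plan is to regard $\uu_k(t)$ as the evaluation $u(\gamma_k^t)$ of a single scalar function
\begin{equation*}
u(z) := \frac{1}{2N\,\im m_{\sc}(z)} \sum_{j=-N}^{N} \im\!\left(\frac{1}{\gamma_j - z}\right) d_j, \qquad d_j := \sigma_j(H) - \sigma_j(G),
\end{equation*}
at the characteristic image of the typical location $\gamma_k$, and to view the quotient $\im\frakS_0(z_t)/(2N\,\im S_0(z_t))$ in \eqref{e.ApproxUbar} as the natural perturbation of $u(z_t)$ in which the quantiles $\gamma_j$ have been replaced by the empirical positions $s_j(0)$ (and $m_{\sc}$ by $S_0$). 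With these identifications both inequalities reduce to three ingredients: (a) a Lipschitz-type bound $|\nabla u(z)| \lesssim N^\eps/(N\,\im z)$ in the strict bulk; (b) the geometric comparison of $\gamma_k^t-\gamma_\ell^s$ and $z_t-\gamma_\ell^s$ provided by Lemma \ref{l.Geometry}; and, for \eqref{e.ApproxUbar}, (c) a rigidity-based substitution $s_j(0)\mapsto\gamma_j$ combined with the local semicircle law \eqref{eq:Local_SC_Law} to pass from $S_0$ to $m_{\sc}$ in the denominator.

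For the Lipschitz estimate, with $z=E+\i\eta$, I would compute $\partial_E$ and $\partial_\eta$ of the numerator $\sum_j d_j\,\eta/((\gamma_j-E)^2+\eta^2)$. Using $2|a|\eta\le a^2+\eta^2$ to absorb the extra $(\gamma_j-E)$ produced by $\partial_E$, both partials are dominated by $\sum_j |d_j|/((\gamma_j-E)^2+\eta^2)$. On $\A_\eps$ the rigidity \eqref{e.Rigidity} yields $|d_j|\le 2N^{-2/3+\eps}(N+1-|j|)^{-1/3}$; splitting the sum into bulk indices (where $|d_j|\lesssim N^{-1+\eps}$ and a Riemann-sum comparison gives $\sum_j 1/((\gamma_j-E)^2+\eta^2)\sim N/\eta$) and edge indices (where $|\gamma_j-z|\sim 1$ since $z$ lies in the strict bulk, so that $\sum_{\textrm{edge}} |d_j|\lesssim N^\eps$) produces the bound $N^\eps/\eta$ on the numerator gradient. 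Since $\im m_{\sc}(z)\sim 1$ with bounded derivatives for $|E|<2-c/2$, dividing by $2N\,\im m_{\sc}(z)\sim N$ gives $|\nabla u(z)|\lesssim N^\eps/(N\eta)$. Inequality \eqref{e.GapUbar} now follows by integrating along the line segment from $\gamma_k^t$ to $\gamma_\ell^s$: by Lemma \ref{l.Geometry} the characteristic flow is essentially a vertical translation in the bulk, so $\im\gamma_k^t\sim t$, the segment stays in $\{\im z\gtrsim s\wedge t\}$, and its length is $\lesssim |k-\ell|/N+|s-t|$.

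For \eqref{e.ApproxUbar} I would split the left-hand side as
\begin{equation*}
\left(\frac{\im\frakS_0(z_t)}{2N\,\im S_0(z_t)}-u(z_t)\right)+\bigl(u(z_t)-u(\gamma_\ell^s)\bigr).
\end{equation*}
The second difference is handled by the Lipschitz bound: Lemma \ref{l.Geometry} gives $\im z_t\sim\eta+t$ and $|z_t-\gamma_\ell^s|\lesssim |E-\gamma_\ell|+|(\eta+t)-s|$ in the strict bulk, and the segment stays in $\{\im z\gtrsim s\wedge(\eta+t)\}$, reproducing exactly the right-hand side of \eqref{e.ApproxUbar}. For the first difference I would replace $s_j(0)$ by $\gamma_j$ in $\frakS_0(z_t)$ using $|s_j(0)-\gamma_j|\prec N^{-2/3}(N+1-|j|)^{-1/3}$ (with the error controlled by the same bulk/edge splitting applied to $\sum_j|s_j(0)-\gamma_j|/|(s_j(0)-z_t)(\gamma_j-z_t)|$), and replace $2N\,\im S_0(z_t)$ by $2N\,\im m_{\sc}(z_t)$ via \eqref{eq:Local_SC_Law}; both corrections are $O(N^\eps/(N(\eta+t)))$ and are absorbed into the right-hand side.

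The main technical obstacle is the bulk/edge split in step (a): a naive application of rigidity produces an unacceptable $N^{1/3+\eps}/\eta$ bound because the factor $(N+1-|j|)^{-1/3}$ is largest at the edges, and one must crucially exploit that at a strict-bulk point $z$ the denominator $|\gamma_j-z|$ is of order $1$ (rather than $\eta$) whenever $\gamma_j$ is near the edge, providing the required $(N+1-|j|)^{2/3}$ saving. A secondary care point is verifying that the auxiliary points $z_t, \gamma_k^t, \gamma_\ell^s$ and the interpolating segments all remain in the strict bulk where Lemma \ref{l.Geometry} yields the simple $\im\cdot\sim t$ behavior; this follows cleanly from the hypotheses $|E|<2-c$, $k,\ell\in\llbracket (c-1)N,(1-c)N\rrbracket$, and $s,t,\eta\in[N^{-1+4\eps},1]$.
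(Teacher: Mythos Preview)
Your proposal is correct and follows essentially the same strategy as the paper's proof: both rely on rigidity to bound $|d_j|$, the bulk lower bound $\im m_{\sc}\gtrsim 1$, the characteristic geometry $\im\gamma_k^t\sim t$, and a Poisson-kernel sum $\sum_j|d_j|/|\gamma_j-z|^2\lesssim N^\eps/\im z$. The only real difference is packaging: you phrase things as a gradient bound $|\nabla u(z)|\lesssim N^\eps/(N\im z)$ and integrate along a segment, whereas the paper writes the difference directly as $I_1+I_2$ (prefactor change plus kernel change) and estimates each; for \eqref{e.ApproxUbar} the paper simply says ``same arguments'' while you spell out the $s_j(0)\mapsto\gamma_j$, $S_0\mapsto m_{\sc}$ substitutions. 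One small efficiency worth noting: the paper avoids your explicit bulk/edge split by observing that when the sum is turned into an integral against $\d\rho_{\sc}$, the density $\rho_{\sc}(x)\sim\xi(x)^{1/2}$ exactly cancels the rigidity factor $\xi(x)^{-1/2}$, reducing directly to $\int(\,(x-E)^2+\eta^2)^{-1}\,\d x\lesssim\eta^{-1}$ without a case distinction.
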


\begin{proof}
By the properties of the Stieltjes transform $ m_\sc(z) $ (see e.g. \cite[Section 6]{erdos2017book}) and direct computation, we have
$$ \im m_\sc(\gamma_k^t) \gtrsim 1,\ \  \im m_\sc(\gamma_\ell^s) \gtrsim 1, $$
and
\begin{equation}\label{eq:Diff_Im_msc}
\abs{\im m_\sc(\gamma_k^t) - \im m_\sc(\gamma_\ell^s)} + \abs{\gamma_k^t - \gamma_\ell^s} \lesssim \frac{|k-\ell|}{N} + |s-t|.
\end{equation}
In the set $ \A_\eps $, the rigidity estimates imply that
$$ |\sigma_j(H) - \sigma_j(G)| \leq N^{-\frac{2}{3}+\eps} (N+1-|j|)^{-\frac{1}{3}}. $$
Then we have
\begin{equation}\label{eq:Im_Int_Approx1}
\begin{aligned}
\abs{\frac{1}{2N} \im \pth{\sum_{-N \leq j \leq N} \frac{\sigma_j(H) - \sigma_j(G)}{\gamma_j - \gamma_k^t}}} &\lesssim \frac{1}{2N} (\im \gamma_k^t) \sum_{-j \leq j \leq N} \frac{N^{-\frac{2}{3}+\eps} (N+1-|j|)^{-\frac{1}{3}}}{(\gamma_j - \re \gamma_k^t)^2 + (\im \gamma_k^t)^2}\\
&\lesssim N^{-1+\eps} (\im \gamma_k^t) \int_{-2}^2 \frac{\xi(x)^{-\frac{1}{2}}}{(x-\re \gamma_k^t)^2+(\im \gamma_k^t)^2} \d \rho_\sc(x)\\
&\lesssim N^{-1+\eps} (\im \gamma_k^t) \int_{-2}^2 \frac{1}{(x-\re \gamma_k^t)^2+(\im \gamma_k^t)^2} \d x\\
&\lesssim N^{-1+\eps}.
\end{aligned}
\end{equation}
By triangle inequality, we have
\begin{align*}
\abs{\uu_k(t) - \uu_\ell(s)} & \leq \frac{1}{2N} \abs{ \pth{ \frac{1}{\im m_\sc(\gamma_k^t)} - \frac{1}{\im m_\sc(\gamma_\ell^s)} } \im \pth{\sum_{-N \leq j \leq N} \frac{\sigma_j(H) - \sigma_j(G)}{\gamma_j - \gamma_k^t}} }\\
&\quad + \frac{1}{2N \im m_\sc(\gamma_\ell^s)} \sum_{-N \leq j \leq N} \abs{ \im \pth{\frac{1}{\gamma_j - \gamma_k^t}} - \im \pth{\frac{1}{\gamma_j - \gamma_\ell^s}} } \abs{\sigma_j(H) - \sigma_j(G)}\\
&=: I_1+I_2.
\end{align*}
Using \eqref{eq:Diff_Im_msc} and \eqref{eq:Im_Int_Approx1}, we obtain
$$ I_1 \lesssim N^\eps \pth{\frac{|k-\ell|}{N^2} + \frac{|s-t|}{N}}. $$
For the second term $ I_2 $, in the set $ \A_\eps $, the rigidity and \eqref{eq:Diff_Im_msc} imply that
\begin{align*}
I_2 &\lesssim N^{-1+\eps} \sum_{-N \leq j \leq N} N^{-\frac{2}{3}} (N+1-|j|)^{-\frac{1}{3}} \abs{ \frac{\gamma_k^t - \gamma_\ell^s}{(\gamma_j - \gamma_k^t)(\gamma_j - \gamma_\ell^s)} }\\
&\lesssim N^{-1+\eps} \pth{\frac{|k-\ell|}{N} + |s-t|} \sum_{-N \leq j \leq N} N^{-\frac{2}{3}} (N+1-|j|)^{-\frac{1}{3}} \pth{\frac{1}{|\gamma_j-\gamma_k^t|^2} + \frac{1}{|\gamma_j - \gamma_\ell^s|^2}}.
\end{align*}
Recall from Lemma \ref{l.Geometry} that $ \im \gamma_k^t \sim t $ and $ \im \gamma_\ell^s \sim s $. Using a similar argument as in \eqref{eq:Im_Int_Approx1} we obtain
$$ I_2 \lesssim N^{-1+\eps} \pth{\frac{|k-\ell|}{N} + |s-t|} \pth{\frac{1}{t} + \frac{1}{s}} \lesssim N^{\eps} \pth{\frac{|k-\ell|}{N^2 (s \wedge t)} + \frac{|s-t|}{N(s \wedge t)}}.  $$
Hence we have proved \eqref{e.GapUbar}. For the other part \eqref{e.ApproxUbar}, it can be proved via the same arguments.
\end{proof}

As a consequence, we have a good control for the size of $ \uu_k(t) $ away from the soft edge. This is based on the symmetric structure of $ \{\uu_k\} $.

\begin{lemma}\label{l.SizeUbar}
Let $ \eps>0 $ and $ 0<c<1 $. For any $ t \in [N^{-1+4\eps},1] $ and $ k \in \llbracket (c-1)N,(1-c)N \rrbracket $, with overwhelming probability we have
\begin{equation}\label{e.SizeUbar}
|\uu_k(t)| \lesssim N^\eps \dfrac{k}{N^2 t}.
\end{equation}
\end{lemma}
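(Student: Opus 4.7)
The plan is to exploit the antisymmetry built into the symmetrization: $\Delta_j := \sigma_j(H) - \sigma_j(G)$ satisfies $\Delta_{-j} = -\Delta_j$ while $s_{-j}(0) = -s_j(0)$ for every $\nu$. This forces the imaginary part of $\frakS_0$ to vanish \emph{exactly} on the positive imaginary axis, and the bound on $|\uu_k(t)|$ then follows from the approximation \eqref{e.ApproxUbar} of Lemma \ref{l.ApproxUbar} applied at a reference point whose characteristic lies on that axis.

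Concretely, for any $b > 0$, pairing $j$ with $-j$ in the sum defining $\frakS_0$ gives
\begin{equation*}
\im \frakS_0(ib) \,=\, -\sum_{j \neq 0} \Delta_j \,\frac{b}{s_j(0)^2 + b^2} \,=\, 0,
\end{equation*}
since $s_j(0)^2 = s_{-j}(0)^2$ and $\Delta_{-j} = -\Delta_j$, while $\im S_0(ib) > 0$ strictly. This cancellation is exact and independent of $\nu$ and of any rigidity event. Now fix $\eps' \in (0, \eps)$ and apply \eqref{e.ApproxUbar} (with $\eps$ there replaced by $\eps'$) taking $\ell = k$, $s = t$, spectral parameter $z = i\eta$ (so $E = 0$), and characteristic-time parameter (the ``$t$'' in the lemma, which we rename $\tau$) equal to $t/2$ with $\eta = t/2$. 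From \eqref{e.Characteristic} together with $\sqrt{z^2 - 4} = i\sqrt{\eta^2 + 4}$ one checks that
\begin{equation*}
z_\tau \,=\, i\qth{\eta\cosh(\tau/2) + \sqrt{\eta^2+4}\,\sinh(\tau/2)}
\end{equation*}
is again purely imaginary, so $\im \frakS_0(z_\tau) = 0$ by the previous display while $\im S_0(z_\tau) > 0$. Hence $\frac{\im \frakS_0(z_\tau)}{2N\im S_0(z_\tau)} = 0$, and since $\eta + \tau = t$ annihilates the second term in \eqref{e.ApproxUbar} while $|E - \gamma_k| = |\gamma_k|$ gives the first, we obtain
\begin{equation*}
|\uu_k(t)| \,\lesssim\, N^{\eps'}\,\frac{|\gamma_k|}{N t}.
\end{equation*}
The parameter constraint $\eta, \tau \in [N^{-1+4\eps'}, 1]$ holds over the full range $t \in [N^{-1+4\eps}, 1]$ once $N$ is large, because $\eps' < \eps$.

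For $k \in \llbracket(c-1)N, (1-c)N\rrbracket$, the typical position $\gamma_k$ lies in a compact subinterval of $(-2, 2)$ on which $\rho_\sc$ is bounded below by a positive constant $c_0(c)$. The defining identity $\int_0^{\gamma_k}\rho_\sc(x)\,dx = |k|/(2N)$ then yields $|\gamma_k| \lesssim |k|/N$ with constant depending only on $c$. Substituting into the previous display and absorbing $N^{\eps'}$ into $N^\eps$ gives the claim; the overwhelming-probability qualifier is inherited from $\A_\eps$ via Lemma \ref{l.Rigidity}.

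The only delicate point is recognizing that the real part of $z$ must be chosen \emph{exactly} to zero so that Step 1 produces exact (rather than approximate) cancellation, and simultaneously balancing $\eta + \tau = t$ so that only a single error term in \eqref{e.ApproxUbar} survives; once those parameters are locked in, that surviving term already delivers the required factor $|\gamma_k|/(Nt) \lesssim |k|/(N^2 t)$, and the rest is bookkeeping.
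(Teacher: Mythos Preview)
Your proof is correct and rests on the same underlying symmetry as the paper's, but you route it differently. The paper shows directly that $\uu_{-k}(t)=-\uu_k(t)$ (using $\gamma_{-k}^t=-\overline{\gamma_k^t}$ and the oddness of $\sigma_j(H)-\sigma_j(G)$ in $j$), then applies the gap bound \eqref{e.GapUbar} with $(\ell,s)=(-k,t)$ to get $2|\uu_k(t)|\lesssim N^\eps\,2|k|/(N^2t)$. You instead express the symmetry as the vanishing of $\im\frakS_0$ on the imaginary axis and invoke the other half \eqref{e.ApproxUbar} of the same lemma at $E=0$ with $\eta+\tau=t$, producing $|\uu_k(t)|\lesssim N^{\eps'}|\gamma_k|/(Nt)$ and then converting $|\gamma_k|\lesssim |k|/N$. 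Both arguments are one symmetry observation plus one call to Lemma~\ref{l.ApproxUbar}; the paper's is slightly cleaner in that it avoids tracking the characteristic and the auxiliary parameter balance, while yours makes the role of the imaginary axis as the ``center of symmetry'' for $\frakS_0$ more explicit.
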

\begin{proof}
A key observation is the following
$$ \re \gamma_{-k}^t=-\re\gamma_k^t,\ \ \im \gamma_{-k}^t=\im \gamma_k^t,\ \ \re m_{\sc}(\gamma_{-k}^t) =-\re m_{\sc}(\gamma_{k}^t) ,\ \ \im m_{\sc}(\gamma_{-k}^t) =\im m_{\sc}(\gamma_{k}^t).  $$
Therefore, we have
\begin{align*}
\uu_{-k}(t) &= \dfrac{1}{2N \im m_{\sc}(\gamma_{-k}^t)} \sum_{-N \leq j \leq N} \im \left( \dfrac{1}{\gamma_j - \gamma_{-k}^t} \right) (\sigma_j(H) - \sigma_j(G))\\
&= \dfrac{1}{2N \im m_{\sc}(\gamma_k^t)} \sum_{-N \leq j \leq N} \dfrac{\im(\gamma_{-k}^t)}{\left(\gamma_j - \re(\gamma_{-k}^t) \right)^2+(\im \gamma_{-k}^t)^2} (\sigma_j(H) - \sigma_j(G))\\
&= \dfrac{1}{2N \im m_{\sc}(\gamma_k^t)} \sum_{-N \leq j \leq N} \dfrac{\im(\gamma_{k}^t)}{\left(\gamma_j + \re(\gamma_{k}^t) \right)^2+(\im \gamma_{k}^t)^2} (\sigma_j(H) - \sigma_j(G))
\end{align*}
Using the symmtrization of the singular values, we further have
\begin{align*}
\uu_{-k}(t) &= -\dfrac{1}{2N \im m_{\sc}(\gamma_k^t)} \sum_{-N \leq j \leq N} \dfrac{\im(\gamma_{k}^t)}{\left(\gamma_{-j} - \re(\gamma_{k}^t) \right)^2+(\im \gamma_{k}^t)^2} (\sigma_{-j}(H)-\sigma_{-j}(G))\\
&= -\dfrac{1}{2N \im m_{\sc}(\gamma_k^t)} \sum_{-N \leq j \leq N} \im \left( \dfrac{1}{\gamma_{-j} - \gamma_{k}^t} \right) (\sigma_{-j}(H)-\sigma_{-j}(G))\\
&= -\dfrac{1}{2N \im m_{\sc}(\gamma_k^t)} \sum_{-N \leq j \leq N} \im \left( \dfrac{1}{\gamma_j - \gamma_k^t} \right) (\sigma_j(H) - \sigma_j(G))\\
&= - \uu_k(t)
\end{align*}
Consequently, by \eqref{e.GapUbar} we have
$$ |\uu_k(t) - \uu_{-k}(t)| = 2|\uu_k(t)| \lesssim N^\eps \dfrac{2k}{N^2 t}. $$
This shows the desired result.
\end{proof}

Finally, it's straightfoward to derive Theorem \ref{t.Relaxation} from Proposition \ref{p.Homogenization} and Lemma \ref{l.SizeUbar}. Thus, our primary goal is to prove Proposition \ref{p.Homogenization}.

\subsection{Bootstrap arguments}
We will prove the main technical estimate Proposition \ref{p.Homogenization} via a bootstrap argument. 

\begin{definition}[Hypothesis $ \calH_{\alpha} $]
Consider the following hypothesis: For any fixed small $ 0<c<1 $, the following holds for $ \eps_0>0 $ arbitrarily small. For any $ N^{-1+\eps_0}<t<1 $, $ k \in \llbracket (c-1)N,(1-c)N \rrbracket $ and $ \nu \in [0,1] $, we have
\begin{equation}\label{e.Pa}
\left| \varphi_k^{(\nu)}(t) - \uu_k(t) \right| \prec \dfrac{(Nt)^\alpha}{N^2 t}.
\end{equation}
\end{definition}

Proposition \ref{p.Homogenization} is derived via a bootstrap of the hypothesis $ \calH_{\alpha} $. Specifically, we have the following two lemmas.

\begin{lemma}\label{l.InitialIteration}
The hypothesis $ \calH_1 $ is true.
\end{lemma}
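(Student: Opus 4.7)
The plan is to prove $\calH_1$, i.e.\ $|\varphi_k^{(\nu)}(t)-\uu_k(t)|\prec 1/N$ for $k\in\llbracket(c-1)N,(1-c)N\rrbracket$, $t\in[N^{-1+\eps_0},1]$, and $\nu\in[0,1]$, by bounding $|\varphi_k^{(\nu)}(t)|$ and $|\uu_k(t)|$ each separately by $1/N$ (up to $N^\eps$ factors) and then applying the triangle inequality. At this coarsest level of the bootstrap no cancellation between the two terms is needed; both pieces follow directly from inputs already in hand, namely the rough decay estimate Lemma \ref{lem:phi_decay_rough} and the rigidity event $\A_\eps$ from Lemma \ref{l.Rigidity}.

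The first bound is immediate from Lemma \ref{lem:phi_decay_rough}, which gives $|\varphi_k^{(\nu)}(t)|\prec N^{-1}((\tfrac{N+1-|k|}{N})^{1/3}\vee t)^{-1}$. Since $k$ lies in the bulk we have $N+1-|k|\geq cN$, so the denominator is bounded below by $c^{1/3}$, yielding $|\varphi_k^{(\nu)}(t)|\prec 1/N$ uniformly over $\nu\in[0,1]$ and $t\leq 1$.

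For the second bound, I would plug the rigidity estimate $|\sigma_j(H)-\sigma_j(G)|\prec N^{-2/3}(N+1-|j|)^{-1/3}$ (valid on $\A_\eps$) directly into the definition \eqref{eq:Def_hatphi}. Since $\im m_\sc(\gamma_k^t)\gtrsim 1$ for bulk $k$, $\im\gamma_k^t\sim t$ by Lemma \ref{l.Geometry}, and $\im(1/(\gamma_j-\gamma_k^t))=\im\gamma_k^t/|\gamma_j-\gamma_k^t|^2$, this gives
\begin{equation*}
|\uu_k(t)|\prec\frac{t\,N^{-2/3}}{N}\sum_{-N\leq j\leq N}\frac{(N+1-|j|)^{-1/3}}{(\gamma_j-\re\gamma_k^t)^2+t^2}.
\end{equation*}
I then split the sum into a bulk part ($|\gamma_j|\leq 2-c/2$) and an edge part. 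The bulk part uses $(N+1-|j|)^{-1/3}\lesssim N^{-1/3}$ together with the Riemann-sum comparison $\sum_{j\text{ bulk}}((\gamma_j-\re\gamma_k^t)^2+t^2)^{-1}\lesssim N/t$, contributing $\lesssim N^{2/3}/t$. The edge part uses $|\gamma_j-\re\gamma_k^t|\gtrsim 1$ (since $\re\gamma_k^t$ stays in the bulk), so the denominator is of order one, and the sum of $(N+1-|j|)^{-1/3}$ over edge indices is $\lesssim \sum_{m=1}^{N}m^{-1/3}\lesssim N^{2/3}$. Hence the full sum is $\lesssim N^{2/3}/t$, and $|\uu_k(t)|\prec (t\,N^{-2/3}/N)\cdot(N^{2/3}/t)=1/N$.

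Combining by the triangle inequality gives $\calH_1$. I do not anticipate any serious obstacle here: this initial hypothesis sits at the top of the bootstrap ladder and is proved from the crudest available inputs, essentially rigidity and the a priori estimate already packaged in Lemma \ref{lem:phi_decay_rough}. The real work of the bootstrap --- extracting cancellation between $\varphi_k$ and $\uu_k$ and driving the error down to $(Nt)^\alpha/(N^2 t)$ for smaller $\alpha$ --- will be handled in the contraction step, where the stochastic advection equation of Lemma \ref{l.Dynamics} and its deterministic approximation \eqref{e.DeterministicPDE} play the central role.
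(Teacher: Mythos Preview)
Your proposal is correct and follows essentially the same approach as the paper: bound $|\varphi_k^{(\nu)}(t)|$ and $|\uu_k(t)|$ separately by $N^{-1}$ using Lemma \ref{lem:phi_decay_rough} and rigidity respectively, then apply the triangle inequality. The only cosmetic difference is that the paper invokes the already-established estimate \eqref{eq:Im_Int_Approx1} (from the proof of Lemma \ref{l.ApproxUbar}) for the $\uu_k$ bound, whereas you reprove that estimate directly via a bulk/edge split; the underlying computations are the same.
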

\begin{proof}
Recall from Lemma \ref{lem:phi_decay_rough} that \eqref{eq:phi_decay_rough} implies that $ \varphi_k^{(\nu)}(t) \prec N^{-1} $. On the other hand, from the definition \eqref{eq:Def_hatphi} of $ \uu_k $, using the rigidity and \eqref{eq:Im_Int_Approx1} we obtain $ \uu_k(t) \prec N^{-1} $ thanks to the arbitrariness of $ \eps_0 $. Therefore, the triangle inequality yields $ |\varphi_k^{(\nu)}(t) - \uu_k(t)| \prec N^{-1} $, which completes the proof.
\end{proof}

\begin{lemma}\label{l.Bootstrap}
If $ \calH_{\alpha} $ is true, then $ \calH_{3\alpha/4} $ is true, i.e.
$$ \left| \varphi_k^{(\nu)}(t) - \uu_k(t) \right| \prec \dfrac{(Nt)^{\frac{3\alpha}{4}}}{N^2 t}. $$
\end{lemma}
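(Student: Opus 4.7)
The strategy is to analyze the stochastic advection equation \eqref{e.AdvectionEqn} for $\frakS_t$ along the characteristics \eqref{e.Characteristic} and exploit the a priori bound $\calH_\alpha$ to improve the control on the martingale term, then extract the desired pointwise bound by inverting the Stieltjes transform.

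Concretely, let $z_\tau$ denote the characteristic issued from $z$ run backward from time $t$ to time $0$. Applying It\^o's formula to $\tau\mapsto\frakS_\tau(z_{t-\tau})$, the leading transport contribution $(m_\sc(z)+z/2)\partial_z\frakS_\tau$ in \eqref{e.AdvectionEqn} cancels exactly against the drift of the characteristic, leaving a remainder that splits as (i) a deterministic error proportional to $(S_\tau(z)-m_\sc(z))\,\partial_z\frakS_\tau$, controlled via the local semicircle law \eqref{eq:Local_SC_Law}; (ii) the subleading drift $\frac{1}{4N}\partial_{zz}\frakS_\tau$ together with the symmetrization contribution involving $(s_k+z)^{-1}$; and (iii) an It\^o martingale $M_\tau$ whose integrand is $\varphi_k/(s_k-z_{t-\tau})^2$. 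Under $\calH_\alpha$, combined with the bound $|\uu_k(\tau)|\lesssim N^\eps k/(N^2\tau)$ from Lemma \ref{l.SizeUbar}, we obtain sharpened pointwise control on $\varphi_k^{(\nu)}(\tau)$, which in turn bounds the quadratic variation of $M_\tau$ via an estimate of the schematic form $\langle M\rangle_t \lesssim \max_{k,\tau}|\varphi_k(\tau)|^2 \cdot t/\eta^3$. Applying Burkholder--Davis--Gundy at spectral parameter $z=E+i\eta$ then yields a martingale bound scaling favorably with $\eta$.

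One extracts $\varphi_k(t)-\uu_k(t)$ from the imaginary part of $\frakS_t(z)-\frakS_0(z_t)$ at a carefully chosen scale $\eta$ via a contour / Helffer--Sj\"ostrand argument. Balancing the deterministic error against the martingale contribution selects an optimal $\eta$ and produces an improvement from $(Nt)^\alpha$ to $(Nt)^{3\alpha/4}$ in the numerator, which is precisely the statement of $\calH_{3\alpha/4}$. The main obstacle will be uniformity in the interpolation parameter $\nu\in[0,1]$ and in the index $k$ close to the hard edge $z=0$: the antisymmetry $\uu_{-k}=-\uu_k$ established in Lemma \ref{l.SizeUbar}, together with the matching antisymmetries of $\varphi_k$ and $\psi_k$ inherited from \eqref{e.SymDBM}, is essential to prevent the drift $\sum_k\varphi_k/((s_k-z)^2(s_k+z))$ from being singular near $z=0$ and to keep the characteristic analysis uniform across the whole spectrum.
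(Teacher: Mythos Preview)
Your proposal captures roughly the content of Lemma~\ref{l.ImprovedLocalAverage} in the paper --- running the stochastic advection equation along characteristics, using $\calH_\alpha$ to sharpen the martingale bound, and obtaining control on $\im\frakS_t(z)$ at scale $\eta$. That part is correct in spirit. The genuine gap is in the step you describe as ``extract $\varphi_k(t)-\uu_k(t)$ from the imaginary part of $\frakS_t(z)-\frakS_0(z_t)$ at a carefully chosen scale $\eta$ via a contour / Helffer--Sj\"ostrand argument.'' This does not work: $\frakS_t(z)$ is a \emph{weighted} Stieltjes transform with weights $\varphi_k(t)$, so knowing $\im\frakS_t(z)$ at scale $\eta$ only gives you an average of $\varphi_k$ over $\sim N\eta$ indices, not the individual $\varphi_k$. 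There is no functional calculus that inverts this to a pointwise bound, and a single-parameter optimization in $\eta$ cannot produce the exponent $3\alpha/4$.

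The paper bridges this averaged-to-pointwise gap by an entirely different mechanism that your plan omits. It decomposes the parabolic generator $\calP$ in \eqref{e.Parabolic} into a short-range part $\calP_{\mathrm{short}}$ (interactions with $|j-k|\le l$) and a long-range part, proves finite speed of propagation for the short-range semigroup (Lemma~\ref{l.FiniteSpeed}), and shows via Duhamel that the long-range contribution is small under $\calH_\alpha$ (Lemma~\ref{l.ShortRange}). It then introduces a regularized flow $\Gamma$ obtained by flattening $\varphi$ to the constant $\uu_k(t)$ outside a window of width $\sim Nr$ and running the short-range dynamics; a parabolic maximum-principle argument on $M(v)=\max_i(\Gamma_i(v)-\uu_k(t))$ converts the averaged estimate (your Lemma~\ref{l.ImprovedLocalAverage}-type bound, appearing as Lemma~\ref{l.ModificationDynamics}) into a differential inequality for $M$. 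The exponent $3\alpha/4$ then emerges from balancing \emph{three} scales --- the averaging window $r$, the short-range cutoff $l$, and the time increment $t-u$ --- chosen as $r=(Nt)^{3\alpha/4}/N$, $l=(Nt)^{\alpha/2}$, $t-u=(Nt)^{\alpha/4}/N$. None of this machinery is present in your outline, and without it the argument stalls at the averaged estimate.
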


The self-improving property of the hypothesis $ \calH_\alpha $ stated in Lemma \ref{l.Bootstrap} is the main technical part of the proof for Proposition \ref{p.Homogenization}. We defer its proof to Appendix \ref{app:Bootstrap}.

Finally, the optimal control \eqref{e.Relaxation} for the local relaxation flow at the hard edge follows from these two lemma together with Lemma \ref{l.SizeUbar}.
\begin{proof}[Proof of Proposition \ref{p.Homogenization}]
Note that
\begin{equation}\label{e.S_kR_kU_k}
\sigma_k(H,t)-\sigma_k(G,t) - \uu_k(t)= \int_0^1 \varphi_k^{(\nu)}(t) \d \nu - \uu_k(t) = \int_0^1 \left( \varphi_k^{(\nu)}(t) - \uu_k(t) \right) \d \nu
\end{equation}
Consider an arbitrarily fixed $ \delta>0 $, based on Lemma \ref{l.InitialIteration} and Lemma \ref{l.Bootstrap}, after a finite time of iterations, with overwhelming probability we have
$$ \left| \varphi_k(t) - \uu_k(t) \right| \leq \dfrac{N^\delta}{N^2 t} $$
This shows that for any fixed $ \tilde{D} $ and $ p $, and for large enough $ N $, we have
$$ \E\left( \left| \varphi_k(t) - \uu_k(t) \right|^{2p} \right) \leq \left(  \dfrac{N^{\delta}}{N^2 t} \right)^{2p} + N^{-\tilde{D}}. $$
By \eqref{e.S_kR_kU_k} we obtain
\begin{equation*}
\E \left( \left| \sigma_k(H,t)-\sigma_k(G,t) - \uu_k(t) \right|^{2p} \right) \leq \int_0^1 \E\left( \left| \varphi_k(t) - \uu_k(t) \right|^{2p} \right) \d \nu\\
\leq \left( \dfrac{N^{\delta}}{N^2 t} \right)^{2p} + N^{-\tilde{D}}.
\end{equation*}
We choose $ p=D/\delta $ and $ \tilde{D}=D+100p $, and then the Markov inequality yields
$$ \P\left( \left| \sigma_k(H,t)-\sigma_k(G,t) - \uu_k(t) \right| \leq \dfrac{N^{2\delta}}{N^2 t} \right) >1-N^{-D}, $$
which completes the proof thanks to the arbitrariness of $ \delta $ and $ D $.
\end{proof}

\section{Quantitative Universality}\label{s.Green}

\subsection{Quantitative resolvent comparison}
In classical random matrix theory, the spectral universality is proved by comparison of the resolvent for matrices with some moment matching conditions. To obtain a quantitative universality for the smallest singular value, we need a quantitative version of the resolvent comparison theorem.

For a fixed constant $ a \in (1,2) $, let $ \rho=\rho(N) \in [N^{-a},N^{-1}] $ be a cutoff scaling. Let $ r>0 $ and consider two symmetric functions $ f_1(x),f_2(x)  $ that are non-increasing in $ |x| $, given by
\begin{equation*}
f_1(x) :=
\begin{cases}
0 & \textrm{if } |x|>rN^{-1}\\
1 & \textrm{if } |x|<rN^{-1}-\rho
\end{cases}
,\ \ \ f_2(x) :=
\begin{cases}
0 & \textrm{if } |x|>rN^{-1}+\rho\\
1 & \textrm{if } |x|<rN^{-1}
\end{cases}
.
\end{equation*}
Also, consider a fixed non-increasing smooth function $ F $ such that $ F(x)=1 $ for $ x \leq 0 $ and $ F(x)=0 $ for $ x \geq 1 $.

A key observation is that the functions $ f_1,f_2 $ and $ F $ can bound the distribution of the smallest singular value $ \sigma_1(H) $. For any function $ f: \R \to \R $, we denote $ \Tr f(H) := \sum_{i=-N}^N f(\sigma_i(H)) $.

\begin{lemma}\label{lem:Distribution_Bound}
We have
\begin{equation}\label{eq:Distribution_Bound}
\E \qth{F \pth{\Tr f_2(H)}} \leq \P \pth{\sigma_1(H)>rN^{-1}} \leq \E \qth{F \pth{\Tr f_1(H)}},
\end{equation}
\end{lemma}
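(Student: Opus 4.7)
The plan is to establish the sandwich pointwise and then take expectation; there is essentially no probabilistic content in this lemma, only a careful unpacking of how the cutoff functions $f_1, f_2$ and the smooth function $F$ are designed to approximate $\mathbf{1}_{\sigma_1(H) > rN^{-1}}$. First I would record that, since $f_1$ and $f_2$ are symmetric, the symmetrization convention gives $\Tr f_j(H) = 2\sum_{i=1}^N f_j(\sigma_i(H))$, and the event $\{\sigma_1(H) > rN^{-1}\}$ is equivalent to saying no $\sigma_i(H)$ lies in $[0, rN^{-1}]$.

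For the upper bound, I would argue pointwise that $\mathbf{1}_{\sigma_1(H) > rN^{-1}} \leq F(\Tr f_1(H))$. When $\sigma_1(H) > rN^{-1}$, every $\sigma_i(H)$ lies outside the support of $f_1$, so $\Tr f_1(H) = 0$ and $F(0) = 1$ matches the indicator; when $\sigma_1(H) \leq rN^{-1}$, the indicator is $0$ while $F(\Tr f_1(H)) \in [0,1]$, so the inequality is trivial. (In particular if $\sigma_1(H) < rN^{-1} - \rho$ then $f_1(\pm \sigma_1(H)) = 1$, so $\Tr f_1(H) \geq 2 \geq 1$ and $F(\Tr f_1(H)) = 0$, showing the bound is tight on that side.)

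For the lower bound, I would show pointwise that $F(\Tr f_2(H)) \leq \mathbf{1}_{\sigma_1(H) > rN^{-1}}$. If $\sigma_1(H) \leq rN^{-1}$, then $|\sigma_1(H)|$ lies in the region where $f_2 \equiv 1$, so $\Tr f_2(H) \geq 2$ and $F(\Tr f_2(H)) = 0$, matching the indicator. If $\sigma_1(H) > rN^{-1}$, the indicator equals $1$ and $F(\Tr f_2(H)) \in [0,1]$, so the inequality is automatic. Taking expectation of the two pointwise bounds yields \eqref{eq:Distribution_Bound}.

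There is no real obstacle here; the lemma is a deterministic setup statement whose only role is to replace the sharp indicator $\mathbf{1}_{\sigma_1(H) > rN^{-1}}$ by the smooth proxies $F(\Tr f_1(H))$ and $F(\Tr f_2(H))$ that are amenable to the resolvent comparison via the Lindeberg exchange method used later. The mild subtlety is to verify the inequalities in the transition regions $|\sigma_1(H)| \in [rN^{-1} - \rho, rN^{-1}]$ and $[rN^{-1}, rN^{-1} + \rho]$, which is handled by the monotonicity $F \in [0,1]$.
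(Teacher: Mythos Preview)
Your proof is correct and follows essentially the same approach as the paper: establish the pointwise inequalities $\mathbf{1}\{\sigma_1(H)>rN^{-1}\} \leq F(\Tr f_1(H))$ and $F(\Tr f_2(H)) \leq \mathbf{1}\{\sigma_1(H)>rN^{-1}\}$ by case analysis on whether $\sigma_1(H)$ exceeds $rN^{-1}$, then take expectations. The paper's proof is slightly terser but identical in substance.
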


\begin{proof}
For the right-hand side, assume $ \sigma_1(H) >rN^{-1} $. By definition of the function $ f_1 $, we have $ \sum_{i=-N}^N f_1(\sigma_i(H))=0 $, which implies $ F(\Tr f_1(H))=1 $. Also note that $ F \geq 0 $. Therefore, we conclude $ \1\sth{\sigma_1(H)>rN^{-1}} \leq F(\Tr f_1(H)) $ and this yields
$$ \P \pth{\sigma_1(H) > rN^{-1}} = \E \qth{\1\sth{\sigma_1(H)>rN^{-1}}} \leq \E \qth{F(\Tr f_1(H))}. $$
The left-hand side can be proved similarly.
\end{proof}

When estimating the distribution $ \P \pth{\sigma_1(H) > rN^{-1}} $, thanks to the rigidity of singular values, we can assume $ r<N^\eps $ without loss of generality, where $ \eps>0 $ is a constant that can be arbitrarily small. Based on Lemma \ref{lem:Distribution_Bound}, to compare the distribution of the smallest singular values of different random matrices, it suffices to compare the functions $ \Tr f_1 $ and $ \Tr f_2 $. In the remaining part of this section, we provide a systematic treatment of such a comparison.

Pick a point $ E \in \R $ with $ 0<E< N^{-1+\eps} $. Let $ f(x) $ be a smooth symmetric function that is non-increasing in $ |x| $ satisfying
\begin{equation}
f(x)=
\begin{cases}
0 & \textrm{if } |x|>E\\
1 & \textrm{if } |x|<E-\rho
\end{cases}
,\ \ \ \textrm{and }\  \|f^{(k)}\|_\infty \lesssim \rho^{-k}\ \textrm{for } k=1,2.
\end{equation}
For the test functions $ f $ and $ F $ defined as above, we have the following quantitative comparison of the resolvents, whose proof is deferred to Appendix \ref{app:Comparison}.

\begin{proposition}\label{prop:Resolvent_Comparison}
Let $ X $ and $ Y $ be two independent random matrices satisfying \eqref{e.Assumption1} and \eqref{e.Assumption2}. Assume the first three moments of the entries $  $ are identical, i.e. $ \E[X_{ij}^k]=\E[Y_{ij}^k] $ for all $ 1 \leq i,j \leq N $ and $ 1 \leq k \leq 3 $. Suppose also that for some parameter $ t=t(N) $ we have
\begin{equation}\label{eq:Fourth_Moment_Diff}
\left| \E [(\sqrt{N}X_{ij})^4] - \E [(\sqrt{N}Y_{ij})^4] \right| \leq t,\ \ \textrm{for all } 1 \leq i,j \leq N.
\end{equation}
With the test functions $ f $ and $ F $ defined as above, there exists a constant $ C>0 $ such that the following is true for any $ \eps>0 $
\begin{equation}\label{eq:Resolvent_Comparison}
\left| \E [F(\Tr f(X))] - \E [F(\Tr f(Y))] \right| \leq  N^{C\eps} \pth{\frac{1}{\rho N^2} + \frac{\pth{\rho N^a}^5}{\sqrt{N}} + t \rho N^a}.  
\end{equation}
\end{proposition}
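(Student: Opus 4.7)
The plan is to combine the Helffer--Sj\"ostrand representation of $\Tr f(\mathcal H)$ with an entry-by-entry Lindeberg replacement. Define the Hermitization
\[
\mathcal H=\begin{pmatrix} 0 & H\\ H^\top & 0\end{pmatrix},
\]
so that $\Tr f(H)=\Tr f(\mathcal H)$ and the limiting bulk of $\mathcal H$ is the symmetric semicircle law; in particular $E\lesssim N^{-1+\eps}$ sits deep in the bulk, so the isotropic local law $|R_{ab}(z)-m_\sc(z)\delta_{ab}|\prec (N\im z)^{-1/2}$ holds for $R(z):=(\mathcal H-z)^{-1}$ on $\im z\ge N^{-1+\omega}$, uniformly across all matrices obtained from $X$ or $Y$ by swapping a single entry. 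With an almost-analytic extension $\widetilde f(x+iy):=(f(x)+iyf'(x))\chi(y)$ for a smooth cutoff $\chi$ on $|y|\le 1$, the HS formula yields
\[
\Tr f(\mathcal H)=\frac{1}{\pi}\int \bar\partial\widetilde f(z)\,\Tr R(z)\,dA(z),\qquad |\bar\partial\widetilde f(x+iy)|\lesssim |y|\|f''\|_\infty\chi(y)+(|f(x)|+|y|\|f'\|_\infty)|\chi'(y)|.
\]

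For the Lindeberg step, I will enumerate the $N^2$ entries of $H$ and build an interpolating sequence $W_0=X,W_1,\dots,W_{N^2}=Y$ with $W_k,W_{k-1}$ agreeing except at the $k$th entry. Writing both in the form $Q^{(k)}+\xi V^{(k)}/\sqrt N$, where $Q^{(k)}$ is the common base, $V^{(k)}$ is the rank-two Hermitization-level perturbation, and $\xi\in\{X^{(k)},Y^{(k)}\}$ is the unnormalized entry ($\xi=O(1)$ by \eqref{e.Assumption2}), I Taylor-expand $\Phi_k(\xi):=F(\Tr f(\mathcal H(Q^{(k)}+\xi V^{(k)}/\sqrt N)))$ to fourth order:
\[
\E\bigl[\Phi_k(X^{(k)})-\Phi_k(Y^{(k)})\bigr]=\sum_{\ell=1}^{4}\frac{\E(X^{(k)})^\ell-\E(Y^{(k)})^\ell}{\ell!}\,\Phi_k^{(\ell)}(0)+\E[\mathcal R_k].
\]
By the three-moment matching hypothesis the $\ell=1,2,3$ contributions vanish identically, the $\ell=4$ term is controlled by $\tfrac{t}{24}|\Phi_k^{(4)}(0)|$ via \eqref{eq:Fourth_Moment_Diff}, and the fifth-order remainder is bounded by $\tfrac{1}{5!}\E|X^{(k)}|^5\sup_\xi|\Phi_k^{(5)}(\xi)|\lesssim\sup_\xi|\Phi_k^{(5)}(\xi)|$ using the sub-exponential tails.

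The technical heart is bounding $\Phi_k^{(4)}(0)$ and $\sup_\xi|\Phi_k^{(5)}(\xi)|$ sharply. Using $\partial_\xi^m R=(-1)^m m!(RV^{(k)}/\sqrt N)^m R$ and the Fa\`a di Bruno expansion of $F\circ\Tr f$, each $\Phi_k^{(m)}$ decomposes into a sum of products of traces $\Tr[(RV^{(k)}/\sqrt N)^{j}R]$ weighted by derivatives of $F$; the $2$-sparse structure of $V^{(k)}$ reduces each trace to a bounded-length product of resolvent entries $R_{ab}(z)$ indexed by the two positions carrying $V^{(k)}$, which by the isotropic local law and the block structure of $\mathcal H$ are uniformly $\prec N^\eps$ (with off-diagonal entries gaining an extra $(N\im z)^{-1/2}$). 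Substituting into the HS representation together with $\|f^{(m)}\|_\infty\le\rho^{-m}\le N^{am}$, $|\supp f^{(m)}|=O(\rho)$, and the fact that $\supp f$ contains only $O(N\rho+N^\eps)$ bulk eigenvalues of $\mathcal H$, the $N^2$ per-entry contributions sum to yield the three terms of \eqref{eq:Resolvent_Comparison}: $t\rho N^a$ from the fourth-order residues, $(\rho N^a)^5/\sqrt N$ from the fifth-order remainders (each factor $\rho N^a$ arising when a derivative of $f$ pairs against the $|y|$-weight in $\bar\partial\widetilde f$), and $1/(\rho N^2)$ from the residual local-law error in replacing $\Tr R(z)$ by $2Nm_\sc(z)$. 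The principal obstacle is precisely this sharp exponent tracking: since $\rho$ can be as small as $N^{-a}$ with $a>1$, each differentiation of $f$ costs a potentially large factor $\rho^{-1}\le N^a$, so to avoid ruinous accumulation one must split the $z$-integration into the regimes $\im z\lesssim\rho$ and $\im z\gtrsim\rho$, apply the isotropic local law separately in each, and carefully distribute the $f$-derivatives arising from the Fa\`a di Bruno expansion through the HS integral.
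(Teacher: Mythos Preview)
Your overall architecture---Hermitization, Helffer--Sj\"ostrand, entrywise Lindeberg, Taylor/resolvent expansion, moment matching for orders $1$--$3$---is exactly the paper's. But there is a concrete gap in your treatment of the small-$\im z$ region, and a related misidentification of where the $1/(\rho N^2)$ term comes from.

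You invoke the isotropic local law $|R_{ab}(z)-m_\sc(z)\delta_{ab}|\prec (N\im z)^{-1/2}$ on the domain $\im z\ge N^{-1+\omega}$, and then say you will ``split the $z$-integration into the regimes $\im z\lesssim\rho$ and $\im z\gtrsim\rho$, apply the isotropic local law separately in each''. But $\rho\in[N^{-a},N^{-1}]$ with $a\in(1,2)$, so $\rho$ can be far below the local-law scale $N^{-1+\omega}$; the standard local law is simply unavailable there, and the $f''$ part of $\bar\partial\widetilde f$ is supported precisely on this strip $|y|\lesssim\rho$. The paper handles this in two moves you are missing: (i) it chooses the HS cutoff $\chi$ at scale $N^{-a}$ (not at scale $1$), so the $z$-integral lives in $|y|\le 2N^{-a}$; (ii) it truncates the HS integral at $|y|>N^{-2}$, defines $\calT(H)$ as the truncated functional, and bounds $|\Tr f(H)-\calT(H)|$ deterministically by $N^{C\eps}/(\rho N^2)$ using rigidity---\emph{this} is the origin of the $1/(\rho N^2)$ term, not any ``residual local-law error in replacing $\Tr R(z)$ by $2Nm_\sc(z)$''. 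On the remaining strip $N^{-2}<|y|<2N^{-a}$ the paper then appeals to a \emph{suboptimal} entrywise local law valid below the usual scale (with the weaker control parameter $N^{-1/2}+\Psi(z)$), which is what makes the resolvent bounds $\max_{k\ne\ell}|R_{k\ell}|\lesssim N^{C\eps}(Ny)^{-1/2}$ legitimate all the way down to $|y|\sim N^{-2}$ and yields the $t\rho N^a$ and $(\rho N^a)^5/\sqrt N$ terms after the resolvent expansion. Without either the truncation step or an explicit below-scale local law, your integrals over $\im z\lesssim\rho$ are not controlled.
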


\subsection{Proof of Theorem \ref{t.Real}}
Using the quantitative comparison theorem (Proposition \ref{prop:Resolvent_Comparison}) and the smoothed analysis (Theorem \ref{t.Relaxation}), we now prove the quantitative universality.

For a general random matrix $ H $ satisfying Assumptions \eqref{e.Assumption1} and \eqref{e.Assumption2}, there exists another matrix $ H_0' $ that also satisfies the same assumptions such that the matrix $ H_t' := e^{-t/2}H_0' + (1-e^{-t})^{1/2} G $ has the same first three moments as $ H $ and the difference between the fourth moments (in the sense of \eqref{eq:Fourth_Moment_Diff}) is $ O(t) $. This is guaranteed by \cite[Lemma 3.4]{erdos2011universality}.

Lemma \ref{lem:Distribution_Bound} and Proposition \ref{prop:Resolvent_Comparison} yields
\begin{multline}
\E \qth{ F(\Tr f_2(H_t')) } - N^{C\eps} \pth{\frac{1}{\rho N^2} + \frac{\pth{\rho N^a}^5}{\sqrt{N}} + t \rho N^a} \leq \P \pth{\sigma_1(H)>rN^{-1}}\\
\leq \E \qth{ F(\Tr f_1(H_t')) } + N^{C\eps} \pth{\frac{1}{\rho N^2} + \frac{\pth{\rho N^a}^5}{\sqrt{N}} + t \rho N^a}.
\end{multline}
Using Lemma \ref{lem:Distribution_Bound} for $ H_t' $ with $ f_1 $ and $ f_2 $ shifted by $ \pm \rho $, we have
\begin{multline}
\P \pth{\sigma_1(H_t') > \frac{r}{N} + \rho} - N^{C\eps} \pth{\frac{1}{\rho N^2} + \frac{\pth{\rho N^a}^5}{\sqrt{N}} + t \rho N^a} \leq \P \pth{\sigma_1(H)>\frac{r}{N}}\\
\leq \P \pth{\sigma_1(H_t') > \frac{r}{N} - \rho} + N^{C\eps} \pth{\frac{1}{\rho N^2} + \frac{\pth{\rho N^a}^5}{\sqrt{N}} + t \rho N^a}.
\end{multline}
Using smoothed analysis Theorem \ref{t.Relaxation}, we have
\begin{multline}
\P \pth{\sigma_1(G) > \frac{r}{N} + \rho + \frac{1}{N^2 t}} - N^{C\eps} \pth{\frac{1}{\rho N^2} + \frac{\pth{\rho N^a}^5}{\sqrt{N}} + t \rho N^a} \leq \P \pth{\sigma_1(H)>\frac{r}{N}}\\
\leq \P \pth{\sigma_1(G) > \frac{r}{N} - \rho - \frac{1}{N^2 t}} + N^{C\eps} \pth{\frac{1}{\rho N^2} + \frac{\pth{\rho N^a}^5}{\sqrt{N}} + t \rho N^a}.
\end{multline}
Taking $ \rho=N^{-a} $, $ t=N^{a-2} $ and setting $ a=1+\delta $, we obtain the optimal bounds
\begin{multline}\label{eq:Universality_Smallest_Parameter}
\P \pth{ N\sigma_1(G) > r + N^{-\delta} } - N^{C\eps} \pth{N^{-1+\delta} \vee N^{-\frac{1}{2}} } \leq \P \pth{N \sigma_1(H)>r}\\
\leq \P \pth{ N\sigma_1(G) > r - N^{-\delta} } + N^{C\eps} \pth{N^{-1+\delta} \vee N^{-\frac{1}{2}} }.
\end{multline}
Hence, thanks to the arbitrariness of $ \eps $, we have proved Theorem \ref{t.Real}.

\smallskip

Finally, for the complex case, using the exact formula for the distribution of $ \sigma_1(G_\C) $, we obtain a rate of convergence to the limiting law. Recall that
$$ \P(N \sigma_1(G_\C) \leq r) =\int_0^r e^{-x} \d x=1-e^{-r}. $$
\begin{proof}[Proof of Corollary \ref{c.Complex}]
For the complex case, the previous arguments are still valid. Therefore, we still have \eqref{eq:Universality_Smallest_Parameter}. Since $ N \sigma_1(G_\C) $ has a bounded density, we have
\begin{multline} 
\P \pth{N \sigma_1(G_\C) \leq r} - N^\eps \pth{N^{-\delta} + (N^{-1+\delta} \vee N^{-1/2})} \leq \P \pth{ N \sigma_1(H_\C) \leq r }\\
\P \pth{N \sigma_1(G_\C) \leq r} + N^\eps \pth{N^{-\delta} + (N^{-1+\delta} \vee N^{-1/2})}
\end{multline}
Choosing $ \delta=1/2 $, we obtain the optimal estimate
$$ \P \pth{N \sigma_1(H_\C)  \leq r} = 1-e^{-r} + N^{-\frac{1}{2}+\eps}, $$
which proves the desired result.
\end{proof}

\section{Condition Number}\label{sec:Condition}
\subsection{Smoothed analysis}
Note that the condition number $ \kappa(H) $ is scaling invariant in the sense that $ \kappa(a H) = \kappa(H) $ for any $ a>0 $. Therefore, in the smoothed analysis, it suffices to consider $ H_t = e^{-t/2}H + (1-e^{t})^{1/2} G $, whose singular values satisfies the stochastic differential equation \eqref{e.SingularValueDBM}.

Recall from Theorem \ref{t.Relaxation}, we have shown that
$$ |\sigma_1(H_t) - \sigma_1(G)| \prec \frac{1}{N^2 t}. $$
Note that in Lemma \ref{lem:phi_decay_rough}, using the same arguments as in Proposition \ref{p.Homogenization}, we can derive that
$$ |\sigma_N(H_t) - \sigma_N(G)| \prec \frac{1}{Nt}. $$
Then for any large $ D>0 $ and small $ \eps_1,\eps_2>0 $, there exists $ N_0(\eps_1,\eps_2,D) $ such that the following holds with probability at least $ 1-N^{-D} $.
$$ \sigma_1(G) - \frac{N^{\eps_1}}{N^2t} \leq \sigma_1(H_t) \leq \sigma_1(G) + \frac{N^{\eps_1}}{N^2t},\ \ \ \sigma_N(G) - \frac{N^{\eps_2}}{Nt} \leq \sigma_N(H_t) \leq \sigma_N(G) + \frac{N^{\eps_2}}{Nt}. $$
Without loss of generality, we assume that $ \eps_1 \sim \eps_2 \sim \eps $ for some $ \eps>0 $ that can be arbitrarily small. Then we have
$$ \frac{\kappa(H_t)}{N} = \frac{\sigma_N(H_t)}{N \sigma_1(H_t)} \leq \frac{\sigma_N(G) + \frac{N^{\eps_2}}{Nt}}{N \sigma_1(G) - \frac{N^{\eps_1}}{Nt}} \leq \frac{\kappa(G)}{N} + \frac{N^{C\eps}}{Nt}, $$
where in the last inequality we use that $ N \sigma_1(G) $ is of order constant with overwhelming probability. By the arbitrariness of $ \eps>0 $, we can relabel the parameter and then obtain
$$ \kappa(H_t) \leq \kappa(G) + \frac{N^{\eps}}{t}. $$
Similarly, we can also prove a lower bound and conclude that
$$ |\kappa(H_t) - \kappa(G)| \prec \frac{1}{t}. $$
For the matrix $ H+\lambda G $, we can write the matrix as
$$ H+\lambda G = \sqrt{1+\lambda^2} \pth{\frac{1}{\sqrt{1+\lambda^2}} H + \frac{\lambda}{\sqrt{1+\lambda^2}}G  } = \sqrt{1+\lambda^2} H_{\log (1+\lambda^2)}. $$
Therefore we have that $ \kappa(H+\lambda G) = \kappa(H_{\log(1+\lambda^2)}) $. As a consequence, we obtain that
$$ |\kappa(H+\lambda G) - \kappa(G)| \prec \frac{1}{\log (1+\lambda^2)}, $$
which completes the proof for Theorem \ref{thm:Smoothed_Condition}.

\subsection{Quantitative universality}
In this section, we prove Theorem \ref{thm:Condition}, which establishes the quantitative universality for the condition number. We will use the universality for both the smallest singular value and the largest singular values. In particular, since the smallest singular values is typically of order $ O(N^{-1}) $, a quantitative control is necessary. The largest singular value is of order constant, and therefore it is easier to deal with. Specifically, for the largest singular value, due to the Tracy-Widom limit for the largest eigenvalue of the sample covariance matrix, we know that $ |\sigma_N(H)-2| \leq  N^{-2/3+\eps}  $ for any $ \eps>0 $ with overwhelming probability.

Let $ \eps>0 $ be an arbitrarily small constant. For any large $ D>0 $ and sufficiently large $ N $, we have
$$ \P \pth{ \frac{\kappa(H)}{N} >r } = \P \pth{N \sigma_1(H) < r^{-1} \sigma_N(H)} \leq \P \pth{ N \sigma_1(H) < r^{-1}(2 +  N^{-\frac{2}{3}+\eps}) } +N^{-D} $$
Using Theorem \ref{t.Real}, we obtain
\begin{align*}
\P \pth{ N^{-1} \kappa(H) >r } &\leq  \P \pth{rN \sigma_1(G) < 2} + N^{-\frac{1}{3}-\eps} +N^{-D}\\
&\leq \P \pth{ rN \sigma_1(G) < \sigma_N(G)+  N^{-\frac{2}{3}+\frac{\eps}{2}} } + N^{-\frac{1}{3}-\eps}+N^{-D}\\
&\leq \P \pth{ \frac{\kappa(G)}{N}>r - \frac{N^{\eps/2}}{N^{2/3}} \frac{1}{N \sigma_1(G)} } + N^{-\frac{1}{3}-\eps} + N^{-D}\\
&\leq \P \pth{ \frac{\kappa(G)}{N} > r - N^{-\frac{2}{3}+\eps} } + N^{-\frac{1}{3}-\eps} + N^{-D}
\end{align*}
where the third inequality follows from that $ N \sigma_1(G) $ is of order constant with overwhelming probability. Taking large enough $ D $, we have
$$ \P \pth{ \frac{\kappa(H)}{N} > r } \leq \P \pth{ \frac{\kappa(G)}{N} >r - N^{-\frac{2}{3}+\eps}} + O \pth{ N^{-\frac{1}{3}-\eps} }. $$
This yields the upper bound in \eqref{eq:Condition_Number}, and the lower bound can be proved similarly. Hence, we have proved Theorem \ref{thm:Condition}.

\section{Beyond Strictly Square Matrices}\label{sec:Non_Square}

As mentioned in the Introduction, the optimal local law for an $ M \times N $ random matrix with general $ \lim N/M =1 $ is a notoriously hard problem. In particular, the optimal rigidity estimates Lemma \ref{l.Rigidity} is unknown unless we restrict $ M \equiv N $. In this section, we discuss a slight extension of the strictly-square case. We show that in the regime $ M=N+O(N^{o(1)}) $, all of our theorems still hold.



This claim is based on the following important observation. All proofs of our paper only relies on the local law (as well as its consequences), and therefore it suffices to show that such a local law is still valid for a general $ M \times N $ matrix. More specifically, the main task is to show that the optimal local semicircle law \eqref{eq:Local_SC_Law} still holds for the Girko symmetrization of an $ M \times N $ random matrix. Modulo the optimal local law, the optimal rigidity will still be valid as a by-product via standard approaches in random matrix theory.

For an $ M \times N $ matrix $ H $, we consider the augmented matrix $ H_\sfA $, which is an $ M \times M $ matrix by adding $ M-N $ columns to $ H $ with independent entries satisfying \eqref{e.Assumption1} and \eqref{e.Assumption2}. Without loss of generality, we may assume that the added columns are the first $ M-N $ columns in $ H_\sfA $. Since $ H_\sfA $ is a square matrix, the local semicircle law (see \cite[Theorem 1.1]{erdos2014imprimitive}) is still true. Specifically, for any fixed $ \omega>0 $, define the spectral domain
$$ \mathbf{S}=\mathbf{S}_\omega := \sth{z=x+\i y: |x| \leq \omega^{-1},M^{-1+\omega} \leq y \leq \omega^{-1}}. $$
Then for any $ z=x+\i y \in \mathbf{S} $, define the resolvent $ G^\sfA(z) := (\tH_\sfA - z)^{-1} $ and we have
$$ \abs{ \frac{1}{2M} \Tr G^\sfA(z) - m_\sc(z) } \prec \frac{1}{M y},  $$
and
$$ \max_{i,j}\abs{ G_{ij}^\sfA(z) - \delta_{ij} m_\sc(z) } \prec \pth{\frac{1}{My} + \sqrt{\frac{\im m_{sc}(z)}{My}}}. $$

For an $ n \times n $ matrix $ X $ and a subset $ 1 \leq k \leq n $, we define $ X^{(k)} $ as the $ (n -k ) \times (n -k) $ matrix
$$ X^{(k)} = (X_{ij})_{k+1 \leq i,j \leq n}. $$
Recall the Girko symmetrization $ \tH $ of a matrix $ H $. Then we have $ \tH = \tH_\sfA^{(M-N)} $.

For $ z=x+\i y $ with $ y>0 $, let $ G^{(k)}(z) := (\tH_\sfA^{(k)} - z)^{-1} $. In particular we have $ G^{(0)}(z) = G^\sfA(z) $ and $ G^{(M-N)}(z) = G(z) := (\tH -z)^{-1} $. Then we have the following resolvent identity (see e.g. \cite[Lemma 3.5]{benaych2017lectures})
$$ G_{ij}^\sfA(z) = G_{ij}^{(1)}(z) + \frac{G_{i1}^\sfA(z) G_{1j}^\sfA(z)}{G_{11}^\sfA(z)},\ \ \mbox{for all } 2 \leq i,j \leq 2M. $$
From the local semicircle law for the square matrix $ H_\sfA $, with overwhelming probability we have
$$ |G_{i,1}^\sfA(z)|, |G_{1,j}^\sfA(z)| \leq M^{\eps} \pth{\frac{1}{My} + \sqrt{\frac{\im m_{sc}(z)}{My}}},\ \ \mbox{and }\  |G_{1,1}^\sfA(z)| \sim 1.  $$
Using local law for $ G_{ij}^\sfA $ again, this implies that
$$  |G_{ij}^{(1)}(z) - G_{ij}^\sfA(z)| \lesssim  \frac{M^{2\eps}}{My}. $$
More generally, we have
$$ G_{ij}^{(k)}(z) = G_{ij}^{(k+1)}(z) + \frac{G_{i,k+1}^{(k)}(z) G_{k+1,j}^{(k)}(z)}{G_{k+1,k+1}^{(k)}(z)},\ \ \mbox{for all } k+2 \leq i,j \leq 2M. $$
By the same arguments as above, for any fixed $ k $, we can derive that
$$ |G_{ij}^{(k+1)}(z) - G_{ij}^{(k)}(z)| \lesssim  \frac{M^{2\eps}}{My}. $$
By a telescoping summation, we derive
\begin{align*}
|G_{ij}(z) - \delta_{ij} m_\sc(z)| &\leq \sum_{k=0}^{M-N-1} |G_{ij}^{(k+1)}(z) - G_{ij}^{(k)}(z)| + |G_{ij}^\sfA(z) - \delta_{ij} m_{\sc}(z)|\\
& \leq M^{\eps} \pth{\frac{1}{My} + \sqrt{\frac{\im m_{sc}(z)}{My}}} + (M-N) \frac{M^{2\eps}}{My}
\end{align*}
Since $ M-N=N^{o(1)} $, the second term can be absorb into the first term with a larger factor $ N^{3\eps} $. Thanks to the arbitrariness of $ \eps $, we have
\begin{equation}\label{eq:LSC_General_1}
|G_{ij}(z) - \delta_{ij} m_\sc(z)| \prec \pth{\frac{1}{Ny} + \sqrt{\frac{\im m_{sc}(z)}{Ny}}}.
\end{equation}

Moreover, the resolvent identity also yields
$$ \sum_{i=k+2}^{2M} G_{ii}^{(k)} (z) = \Tr G^{(k+1)}(z) + \frac{1}{G_{k+1,k+1}^{(k)}(z)} \sum_{i=k+2}^{2M} G_{i,k+1}^{(k)}(z) G_{k+1,i}^{(k)}(z).   $$
This yields
$$ \Tr G^{(k)}(z) - \Tr G^{(k+1)}(z) = \frac{1}{G_{k+1,k+1}^{(k)}(z)} \sum_{i=k+1}^{2M} G_{i,k+1}^{(k)}(z) G_{k+1,i}^{(k)}(z). $$
Using the Ward identity, this implies that
\begin{align*}
\abs{\Tr G^{(k)}(z) - \Tr G^{(k+1)}(z)} &\leq \frac{1}{|G_{k+1,k+1}^{(k)}(z)|} \sum_{i=k+1}^{2M} \abs{G_{i,k+1}^{(k)}(z)}^2\\
&\leq \frac{1}{|G_{k+1,k+1}^{(k)}(z)|} \frac{\im G_{k+1,k+1}^{(k)}(z)}{y} \leq \frac{1}{y}.
\end{align*}
From the local law for $ G^{\sfA} $, with overwhelming probability we have
$$ \abs{\frac{1}{2M} \Tr G^{\sfA}(z) -m_\sc(z)} \leq \frac{M^{\eps}}{My}  $$
Again, using $ M-N=N^{o(1)} $, the telescoping sum yields
$$ \abs{\frac{1}{M+N} \Tr G(z) - m_\sc(z)} \leq \frac{N^{\eps}}{Ny} + \frac{N^{o(1)}}{Ny}. $$
The second term can be absorbed into the first term for any fixed $ \eps>0 $. The arbitrariness of $ \eps $ concludes that
\begin{equation}\label{eq:LSC_General_2}
\abs{\frac{1}{M+N} \Tr G(z) - m_\sc(z)} \prec \frac{1}{Ny}.
\end{equation}

Hence, \eqref{eq:LSC_General_1} and \eqref{eq:LSC_General_2} have shown that the local law also holds for $ H $. The rigidity estimates Lemma \ref{l.Rigidity} also follow from a classical argument in random matrix theory (see e.g. \cite{benaych2017lectures}). As a consequence, Theorem \ref{thm:Smoothed_Singular}, Theorem \ref{t.Real}, Theorem \ref{thm:Smoothed_Condition} and Theorem \ref{thm:Condition} are all still valid.

\section*{Acknowledgment}
The author thanks Paul Bourgade for suggesting this problem and for insightful comments on an early version of this manuscript.

\appendix

\section{Auxiliary Results}

To make this paper self-contained, we collect some well-known results that are used in the paper.


The first result is about controlling the size of a martingale, which is used in Proposition \ref{p.aPriori} to bound the martingale term in the stochastic dynamics \eqref{eq:Dynamics_df}, and also in Lemma \ref{l.ImprovedLocalAverage} to bound \eqref{eq:Dynamics_dg}. This is from \cite[Appendix B.6, Equation (18)]{shorack2009empirical}.
\begin{lemma}\label{lem:Martingale}
For any continuous martingale $ M $ and any $ \lambda,\mu>0 $, we have
$$ \P \pth{\sup_{0 \leq u \leq t}|M_u| \geq \lambda, \langle M \rangle_t \leq \mu} \leq 2 e^{-\frac{\lambda^2}{2\mu}}. $$
\end{lemma}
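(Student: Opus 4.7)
The plan is to reduce this to a Doob-type maximal inequality applied to an exponential martingale. The proof is by the standard exponential tilting argument. First I would stop the martingale $M$ at the first time its quadratic variation exceeds $\mu$: set $\tau = \inf\{u \geq 0 : \langle M \rangle_u \geq \mu\}$ and work with the stopped martingale $\widetilde{M}_u := M_{u \wedge \tau}$, whose bracket satisfies $\langle \widetilde{M} \rangle_u \leq \mu$ for all $u$. On the event in question, $\langle M \rangle_t \leq \mu$ implies $\tau \geq t$, so $\widetilde{M}$ agrees with $M$ on $[0,t]$ and supremum bounds for $\widetilde{M}$ transfer directly back to $M$.

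Next, for each $\theta > 0$, I would introduce the exponential process
\begin{equation*}
Z^\theta_u := \exp\pth{\theta \widetilde{M}_u - \tfrac{\theta^2}{2}\langle \widetilde{M}\rangle_u}.
\end{equation*}
By It\^o's formula this is a nonnegative continuous local martingale with $Z^\theta_0 = 1$, hence a nonnegative supermartingale. On the event $\{\sup_{u \le t}\widetilde M_u \ge \lambda\} \cap \{\langle M\rangle_t \le \mu\}$, at the first time $u^* \le t$ when $\widetilde{M}_{u^*} \ge \lambda$ we have $\langle \widetilde M\rangle_{u^*} \le \mu$, so $Z^\theta_{u^*} \ge \exp(\theta\lambda - \tfrac{\theta^2}{2}\mu)$. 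Doob's maximal inequality for nonnegative supermartingales therefore yields
\begin{equation*}
\P\pth{\sup_{0 \leq u \leq t} \widetilde{M}_u \geq \lambda,\ \langle M\rangle_t \leq \mu} \leq \exp\pth{-\theta\lambda + \tfrac{\theta^2}{2}\mu}.
\end{equation*}
Optimizing in $\theta$ by taking $\theta = \lambda/\mu$ gives the bound $\exp(-\lambda^2/(2\mu))$ for the one-sided supremum.

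Finally, applying the same argument to the continuous martingale $-M$ (which has the same bracket) controls $\sup_{u \le t}(-\widetilde M_u) \ge \lambda$ by the same quantity, and a union bound combining the two one-sided estimates produces the factor of $2$ in the statement. The only mildly subtle point is that $Z^\theta$ is a priori only a local martingale, so one needs to invoke the supermartingale property (via Fatou) rather than equality of expectations; this is standard and is what makes the stopping at $\tau$ necessary to keep the bracket bounded uniformly. No integrability hypothesis on $M$ itself is required, which is why the statement applies to arbitrary continuous martingales.
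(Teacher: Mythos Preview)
Your proof is correct and is the standard argument for this inequality. Note that the paper does not actually prove this lemma: it is stated in the appendix as an auxiliary result and simply attributed to \cite[Appendix B.6, Equation (18)]{shorack2009empirical}, so there is no ``paper's own proof'' to compare against. Your exponential-tilting argument via the Dol\'eans--Dade exponential and Doob's maximal inequality for nonnegative supermartingales is exactly the canonical derivation one finds in the cited reference, and your handling of the local-martingale issue (Fatou gives the supermartingale property, and stopping at $\tau$ keeps the bracket uniformly bounded so the exponent is controlled) is the right way to make it rigorous.
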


The second result is the Helffer-Sj\H{o}strand formula, which is a classical result in functional calculus. This formula is used in Proposition \ref{prop:Resolvent_Comparison} to compute the trace of functions via the Stieltjes transform. We are using the version in \cite[Section 11.2]{erdos2017book}.
\begin{lemma}[Helffer-Sj\H{o}strand formula]\label{lem:HS_Formula}
Let $ f \in C^1(\R) $ with compact support and let $ \chi(y) $ be a smooth cutoff function with support in $ [-1,1] $, with $ \chi(y) =1 $ for $ |y| \leq \frac{1}{2} $ and with bounded derivatives. Then
$$ f(\lambda) = \frac{1}{\pi} \int_{\R^2} \frac{\i y f''(x) \chi(y) + \i (f(x) + \i f'(x)) \chi'(y)}{\lambda -x -\i y} \d x \d y. $$
\end{lemma}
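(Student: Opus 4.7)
The plan is to prove this via the classical almost-analytic extension technique, combined with the generalized Cauchy integral formula (the Cauchy--Pompeiu formula).

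First, I would define the almost-analytic extension of $f$ by setting
\[
\tilde{f}(z) := \bigl( f(x) + \i y f'(x) \bigr)\chi(y), \qquad z = x + \i y \in \mathbb{C}.
\]
Because $f$ has compact support in $x$ and $\chi$ has compact support in $y$, the function $\tilde{f}$ is compactly supported in $\mathbb{C}$; moreover $\tilde{f}(\lambda) = f(\lambda)$ for every real $\lambda$ since $\chi(0)=1$. The crucial feature of this construction is that $\tilde{f}$ is ``almost holomorphic'' along the real axis. A direct computation using $\partial_{\bar{z}} = \partial_x + \i \partial_y$ gives, after the two $f'(x)\chi(y)$ contributions cancel,
\[
\partial_{\bar{z}} \tilde{f}(z) = \i y f''(x)\chi(y) + \i\bigl(f(x) + \i y f'(x)\bigr)\chi'(y).
\]
In particular $\partial_{\bar{z}} \tilde{f}(x,0) = 0$, so this density vanishes on the real axis, which makes the kernel $(\lambda - z)^{-1}$ integrable against it.

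Next, I would invoke the Cauchy--Pompeiu representation: for any compactly supported $C^1$ function $F$ on $\mathbb{C}$,
\[
F(w) = \frac{1}{2\pi} \int_{\mathbb{R}^2} \frac{\partial_{\bar{z}} F(z)}{w - z}\, \d x\, \d y,
\]
which follows from Stokes' theorem applied to the $(1,0)$-form $F(z)\,\d z / (w - z)$ on $\mathbb{C} \setminus B_{\varepsilon}(w)$ and then sending $\varepsilon \downarrow 0$, together with the fact that $1/(w - z)$ is locally integrable and $\partial_{\bar{z}} (1/(w - z)) = -\pi\, \delta_{w}$ distributionally. Substituting $F = \tilde{f}$ and $w = \lambda$, and using $\tilde{f}(\lambda) = f(\lambda)$, produces precisely the claimed identity, modulo the choice of convention for $\partial_{\bar z}$ (the constant $\frac{1}{\pi}$ in the statement corresponds to the convention without the factor $\tfrac{1}{2}$ in $\partial_{\bar z}$).

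The main technical obstacle is the careful handling of the singularity at $z = \lambda$ when applying Stokes' theorem: one must excise a small disk $B_{\varepsilon}(\lambda)$, check that the boundary integral over $\partial B_{\varepsilon}(\lambda)$ tends to $2\pi \i\, \tilde{f}(\lambda)$, and verify that the contribution from $\partial_{\bar z}\tilde{f}$ near $\lambda$ vanishes in the limit --- it is here that the almost-analyticity $\partial_{\bar z} \tilde f (x,0)=0$ is essential, since it gives $|\partial_{\bar z} \tilde f(x+\i y)| \lesssim |y|$ locally and hence makes $\partial_{\bar z}\tilde f/(\lambda - z)$ integrable near $\lambda$. A minor remark is that the appearance of $f''$ in the formula requires $f \in C^2$ for a pointwise interpretation; under the weaker $C^1$ hypothesis the identity should be understood distributionally, or obtained by mollifying $f$, applying the formula to $f*\phi_\delta$, and passing to the limit $\delta \to 0$.
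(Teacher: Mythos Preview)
Your proof is correct and follows the standard almost-analytic extension argument for the Helffer--Sj\"{o}strand formula. The paper, however, does not prove this lemma at all: it is listed in the appendix of auxiliary results as a classical fact in functional calculus, with a citation to \cite[Section 11.2]{erdos2017book}, and no argument is given. So there is nothing to compare against; you have supplied a self-contained proof where the paper simply quotes the result. Your handling of the regularity issue (the formula involves $f''$ while the hypothesis is only $f\in C^1$) and of the normalization constant is appropriate, and the remark that $\partial_{\bar z}\tilde f$ vanishes on the real axis, making the integrand locally integrable near $z=\lambda$, is exactly the point that makes the Cauchy--Pompeiu step go through.
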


We also have the following resolvent expansion identity. This is a well-known result in linear algebra, and it is used in Proposition \ref{prop:Resolvent_Comparison} to compare the resolvents of two matrices.

\begin{lemma}[Resolvent expansion]\label{lem:Resolvent_Expansion}
For any two matrices $ A $ and $ B $, we have
$$ (A+B)^{-1} = A^{-1} - (A+B)^{-1} B A^{-1} $$
provided that all the matrix inverses exist.
\end{lemma}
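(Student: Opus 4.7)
The plan is to verify the identity by a direct algebraic manipulation, using only the standing assumptions that both $A^{-1}$ and $(A+B)^{-1}$ exist. First I would decompose the identity matrix by inserting $(A+B)^{-1}(A+B)$ and distributing:
\[
I = (A+B)^{-1}(A+B) = (A+B)^{-1} A + (A+B)^{-1} B.
\]
Multiplying this equation on the right by $A^{-1}$ (which exists by hypothesis) gives
\[
A^{-1} = (A+B)^{-1} + (A+B)^{-1} B A^{-1},
\]
and rearranging recovers the claimed identity $(A+B)^{-1} = A^{-1} - (A+B)^{-1} B A^{-1}$.

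As a sanity check, I would also verify the identity in the reverse direction by multiplying both sides on the left by $A+B$: the right-hand side becomes $(A+B) A^{-1} - B A^{-1} = (A A^{-1} + B A^{-1}) - B A^{-1} = I$, matching the left-hand side. Since the statement is a one-line consequence of associativity and distributivity of matrix multiplication, there is no real obstacle; the only point requiring care is to keep track of left versus right multiplication and to invoke only the two inverses that the hypothesis guarantees, without implicitly assuming invertibility of $I + A^{-1} B$ or of $B$ itself.
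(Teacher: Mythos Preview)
Your proof is correct; the paper itself does not give a proof of this lemma, treating it as a well-known identity in linear algebra. Your direct algebraic verification is exactly the standard one-line argument, so there is nothing to add.
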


Finally, we have some estimates for the Stieltjes transform of the semicircle law. For $ z=E+\i \eta $ with $ \eta>0 $, recall that $ m_\sc(z) $ denotes the Stieltjes transform of the semicircle distribution. The following estimates are well known in random matrix theory (see e.g. \cite[Lemma 6.2]{erdos2017book}).

\begin{lemma}\label{lem:m_sc}
We have for all $ z=E+\i\eta $ with $ \eta $ that
$$ |m_\sc(z)| = |m_\sc(z) + z|^{-1} \leq 1. $$
Furthermore, there is a constant $ c>0 $ such that for $ E \in [-10,10] $ and $ \eta \in (0,10] $ we have
$$ c \leq |m_\sc(z)| \leq 1-c\eta,\ \ |1-m_\sc^2(z)| \sim \sqrt{\xi(E) + \eta}, $$
as well as
\begin{equation*}
\im m_\sc(z) \sim 
\begin{cases}
\sqrt{\xi(E) + \eta} & \mbox{if } |E| \leq 2,\\
\frac{\eta}{\sqrt{\xi(E)+\eta}} & \mbox{if } |E| \geq 2,
\end{cases}
\end{equation*}
where $ \xi(E) := ||E|-2| $ is the distance of $ E $ to the spectral edge.
\end{lemma}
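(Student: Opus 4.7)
The plan is to work directly from the characterizing quadratic equation $m_\sc(z)^2 + z m_\sc(z) + 1 = 0$, or equivalently from the explicit formula $m_\sc(z) = \tfrac{1}{2}(-z + \sqrt{z^2-4})$, where the branch of the square root is chosen so that $\im m_\sc(z) > 0$ whenever $\im z > 0$. Everything follows from elementary analysis of this formula; no sophisticated input is needed, and the four conclusions are standard facts recorded for convenience. I would organize the argument in four steps.

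First, I would establish the identity and the global bound. Rewriting the quadratic as $m_\sc(z)(m_\sc(z)+z) = -1$ and taking moduli immediately yields $|m_\sc(z)| = |m_\sc(z)+z|^{-1}$. For the bound $|m_\sc(z)| \leq 1$, parametrize $m_\sc(z) = r e^{\ii\theta}$ with $r>0$ and $\theta\in(0,\pi)$ (using $\im m_\sc>0$) and invert via $z = -(re^{\ii\theta} + r^{-1}e^{-\ii\theta})$; a direct computation $\im z = (r - r^{-1})\sin\theta$ shows that $\im z > 0$ forces $r \leq 1$. The refined lower bound $c \leq |m_\sc(z)|$ on the compact set $E\in[-10,10]$, $\eta\in(0,10]$ follows from continuity and the nonvanishing of the continuous extension of $m_\sc$ to the closed upper half-plane intersected with this compact set; the refined upper bound $|m_\sc(z)| \leq 1-c\eta$ is obtained by quantifying the previous computation, using that the Stieltjes representation $\im m_\sc(z) = \eta \int \rho_\sc(x)\,\d x/|x-z|^2$ extracts a factor of $\eta$ from the distance of $r$ to $1$.

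Second, I would deduce the algebraic estimate $|1 - m_\sc^2(z)| \sim \sqrt{\xi(E)+\eta}$. From $m_\sc^2 + z m_\sc + 1 = 0$ one gets $1 - m_\sc^2 = -m_\sc(2m_\sc+z)$, and from the explicit formula $2m_\sc + z = \sqrt{z^2-4}$. Hence $|1-m_\sc^2| = |m_\sc| \cdot |\sqrt{z^2-4}|$. Since $|m_\sc| \sim 1$ on the compact set by the previous step, it suffices to show $|\sqrt{z^2-4}| \sim \sqrt{\xi(E)+\eta}$. This follows from $|z^2-4| = |z-2||z+2|$ by observing that the factor corresponding to the nearer edge has modulus comparable to $\xi(E) + \eta$ while the other is of order $1$.

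Third, I would compute $\im m_\sc(z)$ by splitting into cases. Writing $\im m_\sc = \tfrac{1}{2}(\im\sqrt{z^2-4} - \eta)$ and expanding $z^2 - 4 = (E^2-\eta^2-4) + 2\ii E\eta$, the behavior depends on the sign of the real part. For $|E| \leq 2$, the real part is non-positive of magnitude $\gtrsim \xi(E) + \eta^2$, so $\sqrt{z^2-4}$ lies close to the positive imaginary axis with modulus $\sim \sqrt{\xi(E)+\eta}$, which dominates $\eta$ and yields $\im m_\sc \sim \sqrt{\xi(E) + \eta}$. For $|E| \geq 2$, the real part is positive of order $\xi(E) + \eta^2$, so $\sqrt{z^2-4}$ lies close to the positive real axis, and a careful Taylor expansion of the square root near a positive real base point gives $\im\sqrt{z^2-4} = \eta + \eta \cdot \xi(E)/(\xi(E)+\eta) + \text{lower order}$, after which the cancellation with the $-\eta$ term produces $\im m_\sc \sim \eta/\sqrt{\xi(E)+\eta}$.

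The main obstacle is purely bookkeeping: one must track the behavior of $\sqrt{z^2-4}$ uniformly across the transition regime $||E| - 2| \sim \eta$, where the splitting between the $|E|\leq 2$ and $|E|\geq 2$ cases must match continuously, and the cancellation in $\im\sqrt{z^2-4} - \eta$ for $|E|\geq 2$ must be executed carefully to extract the correct leading order. Once the branch is fixed and the asymptotics of $\sqrt{z^2-4}$ are made uniform in the three regimes (well inside, well outside, and near the edge), all four conclusions follow by direct manipulation.
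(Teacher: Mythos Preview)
Your proposal is correct and follows the standard direct verification from the quadratic $m_\sc^2+zm_\sc+1=0$; this is exactly the argument one finds in the reference the paper cites. Note, however, that the paper does not give its own proof of this lemma: it is recorded in the appendix as a well-known auxiliary fact and simply referred to \cite[Lemma 6.2]{erdos2017book}. So there is no in-paper argument to compare against; your sketch essentially reproduces the textbook proof.

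One small slip worth fixing: in your parametrization $m_\sc=re^{\ii\theta}$ with $\theta\in(0,\pi)$, the relation $z=-m_\sc-m_\sc^{-1}$ gives $\im z=(r^{-1}-r)\sin\theta$, not $(r-r^{-1})\sin\theta$. With the corrected sign, $\im z>0$ and $\sin\theta>0$ indeed force $r<1$, so your conclusion $|m_\sc|\leq 1$ stands; just make sure the written computation matches.
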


\section{Proofs for Smoothed Analysis}
\subsection{Proof of Proposition \ref{p.aPriori}}\label{app:Apriori}

The proof is essentially the same as \cite[Proposition 3.8]{wang2019quantitative}, and we briefly describe the key steps here for completeness. For any $ 1 \leq \ell,m \leq N^{10} $, we define $ t_\ell=\ell N^{-10} $ and $ z^{(m)}=E_m+\i\eta_m=E_m+\i N^{-1+4\eps} \xi(E_m)^{-1/2} $, where $ \int_{-\infty}^{E_m} \d \rho_{\sc}=m N^{-10} $. Consider the stopping times
\begin{align*}
\tau_0 &=\inf\left\{ 0 \leq t \leq 1: \exists -N \leq k \leq N\ s.t.\  |s_k(t)-\gamma_k|> N^{-\frac{2}{3}+\eps}(N+1-|k|)^{-\frac{1}{3}} \right\},\\
\tau_{\ell,m} &=\inf\left\{ 0 \leq s \leq t_\ell : \im \tilde{\frakS}_s\left( z_{t_\ell - s}^{(m)} \right) >\dfrac{N^{2\eps}}{2}\dfrac{\xi(E_m)^{1/2}}{\left( \xi(E_m)^{1/2} \vee t_\ell \right)} \right\},\\
\tau &=\min\left\{ \tau_0,\tau_{\ell,m} : 0 \leq \ell,m\leq N^{10},\xi(E_m) >  N^{-\frac{2}{3}+4\eps} \right\},
\end{align*}
with the convention $ \inf \emptyset =\infty $. We claim that it suffices to show that $ \tau=\infty $ with overwhelming probability.

To prove this claim, for any $ z \in \S_\eps $ and $ 0 \leq t \leq 1 $, we pick $ t_\ell \leq t \leq t_{\ell+1} $ and $ |z-z^{(m)}| < N^{-5} $. Note that the maximum principle Lemma \ref{l.MaxPrin} implies $ |\psi_k(t)| \lesssim 1 $ for all $ k $ and $ t $. Then we have $ |\tilde{\frakS}_t(z) - \tilde{\frakS}_t(z^{(m)})| \lesssim N^{-2} $. Also, note that for $ z=E+i \eta $ we have $ |S_t(z)| \leq \eta^{-1} $, and
$$ \abs{\partial_z \tilde{\frakS}_t(z)} \lesssim N \max_k |\psi_k(0)| \eta^{-2} \lesssim N \eta^{-2},\ \ \abs{\partial_{zz} \tilde{\frakS}_t(z)} \lesssim N \eta^{-3}. $$
Consider the events
$$ \calE_{\ell,m,k} := \sth{ \sup_{t_\ell \leq u \leq t_{\ell+1}} \abs{\int_{t_\ell}^u \frac{e^{-\frac{v}{2}} \psi_k(v) } {(s_k(v) - z^{(m)})^2} \d B_k(v) } < N^{-4} }. $$
On the event $ \bigcap_k \calE_{\ell,m,k} $, the above estimates imply that $ |\tilde{\frakS}_t(z^{(m)}) - \tilde{\frakS}_{t_\ell}(z^{(m)})| < N^{-2} $. It further shows that
$$ \abs{\tilde{\frakS}_t(z) - \tilde{\frakS}_{t_\ell}(z^{(m)}) } < N^{-2}. $$
Since this holds for all $ z $ and $ t $, we have shown that
\begin{equation}\label{eq:Set_Inclusion}
\sth{\tau=\infty} \bigcap \pth{ \bigcap_{1 \leq \ell,m \leq N^{10},-N \leq k \leq N} \calE_{\ell,m,k} }  \subset \bigcap_{z \in \S_\eps, 0\leq t \leq 1} \sth{ \im \tilde{\frakS}_t(z) \leq N^{2\eps} \frac{\xi(E)^{1/2}}{ \pth{\xi(E)^{1/2} \vee t}} } . 
\end{equation}
Moreover, note that
\begin{multline*}
\left\langle \int_{t_\ell}^{t_{\ell+1}} \frac{e^{-\frac{v}{2}} \psi_k(v) } {(s_k(v) - z^{(m)})^2} \d B_k(v) \right\rangle_{t_{\ell+1}}\\
\leq \int_{t_\ell}^{t_{\ell+1}} \frac{e^{-v} |\psi_k(v)|^2}{(s_k(v) - z^{(m)})^4} \d v \leq N^{-10} \pth{N^{-1+4\eps}}^{-4} \pth{\max_k |\psi_k(0)|}^2 \leq N^{-6+16\eps}.
\end{multline*}
Using Lemma \ref{lem:Martingale}, we conclude that the event $ \calE_{\ell,m,k} $ happens with overwhelming probability. By a union bound, we further have that $ \bigcap_{l,m,k} \calE_{\ell,m,k} $ happens with overwhelming probability. Together with the set inclusion \eqref{eq:Set_Inclusion}, we conclude that the claim is true, i.e. it suffices to prove $ \tau=\infty $ with overwhelming probability.

To prove $ \tau=\infty $ with overwhelming probability, consider some fixed $ t=t_\ell $ and $ z=z^{(m)} = E+\i\eta $, and define the function $ f_u(z):=\tilde{\frakS}_u(z_{t-u}) $. By Lemma \ref{l.InitialCondition}, the initial condition is well controlled $ \im \tilde{\frakS}_0(z) \lesssim N^{2\eps} \tfrac{\xi(E_m)^{1/2}}{(\xi(E_m)^{1/2} \vee t)} $. To bound the increments, note that the dynamics \eqref{e.AdvectionEqn} yields
\begin{equation}\label{eq:Dynamics_df}
\d f_{u \wedge \tau}(z) = \ep_u(z_{t-u})\d (u \wedge \tau) - \dfrac{e^{-\frac{u}{2}}}{\sqrt{N}}\sum_{-N \leq k \leq N}\dfrac{\psi_k(u)}{(z_{t-u}-s_k(u))^2}\d B_k(u \wedge \tau),
\end{equation}
where
\begin{equation*}
\ep_u(z) := (S_u(z)-m_{\sc}(z))\partial_z \tilde{\frakS}_u + \dfrac{1}{4N} (\partial_{zz} \tilde{\frakS}_u) + \dfrac{e^{-\frac{u}{2}}}{2N} \sum_{-N \leq k \leq N} \dfrac{\psi_k(u)}{(s_k -z)^2 (s_k+z)}
\end{equation*}
By the local semicircle law \eqref{eq:Local_SC_Law}, we have
\begin{equation}\label{eq:Modify.Apriori_SCL}
\begin{aligned}
&\quad \sup_{0 \leq s \leq t} \left| \int_0^s  \left( S_u(z_{t-u}) - m_{\sc}(z_{t-u}) \right) \partial_z \tilde{\frakS}_u(z_{t-u}) \d (u \wedge \tau) \right|\\
&\leq  \int_0^t \dfrac{N^\eps}{{N \im (z_{t-u})}}  \sum_{-N \leq k \leq N} \dfrac{\psi_k(u)}{|s_k(u) - z_{t-u}|^2} \d (u \wedge \tau)\\
&\leq  \int_0^t \dfrac{N^\eps \im \tilde{\frakS}_u(z_{t-u})}{{N} (\im z_{t-u})^{2}} \d(u \wedge \tau)\\
& \leq \int_0^t \dfrac{N^{2\eps} \d u}{{N}\left( \eta + (t-u)\frac{b(z)}{\xi(z)^{1/2}} \right)^{2}} \dfrac{\xi(E)^{\frac{1}{2}}}{(\xi(E)^{\frac{1}{2}} \vee t)}\\
& \lesssim \dfrac{\xi(E)^{\frac{1}{2}}}{(\xi(E)^{\frac{1}{2}} \vee t)}.
\end{aligned}
\end{equation}
Also, we have
$$ \sup_{0 \leq s \leq t} \left| \int_0^s \dfrac{1}{4N}(\partial_{zz} \tilde{\frakS}_u(z_{t-u})) \d (u \wedge \tau) \right| \lesssim \int_0^t \dfrac{\im \tilde{\frakS}_u(z_{t-u})}{N \left( \im (z_{t-u}) \right)^2}\d (u \wedge \tau)  \lesssim N^{-\eps} \dfrac{\xi(E)^{\frac{1}{2}}}{ (\xi(E)^{\frac{1}{2}} \vee t)}, $$
and
\begin{multline*}
\sup_{0 \leq s \leq t} \left| \int_0^s \dfrac{e^{-\frac{u}{2}}}{2N} \sum_{-N \leq k \leq N} \dfrac{\psi_k(u)}{(s_k - z_{t-u})^2(s_k+z_{t-u})} \d (u \wedge \tau) \right|\\
\lesssim \dfrac{1}{N} \int_0^t \dfrac{1}{\im (z_{t-u})} \sum_{-N \leq k \leq N}\dfrac{\psi_k(u)}{|s_k(u) - z_{t-u}|^2} \d (u \wedge \tau)\\
\lesssim \int_0^t  \dfrac{\im f_u(z_{t-u})}{N \left( \im (z_{t-u}) \right)^2} \d (u \wedge \tau) \lesssim N^{-\eps} \dfrac{\xi(E)^{\frac{1}{2}}}{ (\xi(E)^{\frac{1}{2}} \vee t)}.
\end{multline*}
For the martingale part
$$ M_s := \int_0^s \dfrac{e^{-\frac{u}{2}}}{\sqrt{N}} \sum_{-N \leq k \leq N} \dfrac{\psi_k(u)}{(z_{t-u}-s_k(u))^2}\d B_k(u \wedge \tau), $$
using the rigidity of singular values (Lemma \ref{l.Rigidity}), with overwhelming probability we have
$$ \sup_{0 \leq s \leq t} |M_s|^2 \lesssim N^{\frac{\eps}{2}} \int_0^t \frac{1}{N} \sum_{-N \leq k \leq N} \frac{|\psi_k(u)|^2}{|z_{t-u} - \gamma_k|^4} \d (u \wedge \tau). $$
To estimate this integral, we chop the interval $ [-N,N] $ into $ 2N^{1-4\eps} $ subintervals $ I_j=\llbracket k_j,k_{j+1} \rrbracket $ where $ k_j = -N+\lfloor j N^{4\eps} \rfloor $. We can bound the summation in the integral in the following way
$$ \frac{1}{N} \sum_{-N \leq k \leq N} \frac{|\psi_k(u)|^2}{|z_{t-u} - \gamma_k|^4} \leq \frac{1}{N} \sum_{0 \leq j \leq 2N^{1-4\eps}} \pth{\max_{k \in I_j} \psi_k(u)} \pth{\max_{k \in I_j} \frac{1}{|z_{t-u}-\gamma_k|^4}} \pth{\sum_{k \in I_j}\psi_k(u)}. $$
Using similar discretization arguments as above, we can derive
$$ \max_{k \in I_j} \psi_k(u) \leq \sum_{k \in I_j} \psi_k(u) \leq \frac{N^{6\eps}}{ N (\xi(\gamma_{k_j})^{1/2} \vee u)}, $$
and
$$ \max_{k \in I_j} \frac{1}{|z_{t-u}-\gamma_k|^4} \leq N^{-4\eps} \sum_{k \in I_j} \frac{1}{|z_{t-u}-\gamma_k|^4}. $$
Therefore, we obtain
$$ \sup_{0 \leq s \leq t} |M_s|^2 \leq N^{-2+9 \eps} \int_0^t \int_{-2}^2 \frac{1}{|z_{t-u}-x|^4 (\xi(x) \vee u^2) } d \rho_\sc(x) \d u \lesssim N^{\eps} \frac{\xi(E)}{(\xi(E) \vee t^2)}. $$


Combining this estimate for the martingale term with previous estimates, a union bound shows that with overwhelming probability we have
$$ \sup_{\ell,m,0\leq s \leq t_\ell,\xi(E_m)>\varphi^2 N^{-2/3}} \im \tilde{f}_{s \wedge \tau}(z_{t_\ell - s \wedge \tau}^{(m)}) \lesssim N^{2 \eps} \dfrac{\xi(E)^{1/2}}{\left( \xi(E)^{1/2} \vee t \right)}.  $$
Now we have proved $ \tau=\infty $ with overwhelming probability and hence the desired result is true.

\subsection{Proof of Lemma \ref{l.Bootstrap}}\label{app:Bootstrap}

The proof of Lemma \ref{l.Bootstrap} is a delicate task. The key part of the proof is a careful analysis of the dynamics. The main idea is to approximate the dynamics with a short-range version, which will be easier to control. To do this, we show the finite speed of propagation estimate for the short-range kernel of the parabolic-type equation \eqref{e.Parabolic} satisfied by $ \{\varphi_k\} $. Then we prove a short-range approximation of the original dynamics and introduce a regularized equation. Finally, we show that, with a well-behaved initial condition, the regularized equation gives us the desired good approximation.

To begin with, the core input of the bootstrap argument is the following technical lemma, which states that the estimate of the local average will improve along with the induction hypothesis $ \calH_{\alpha} $.

\begin{lemma}\label{l.ImprovedLocalAverage}
Assume $ \calH_{\alpha} $. Let $ \xi>0 $ be any fixed small constant. For any $ 0<t<1 $, any $ \eps_0>0 $ arbitrarily small and $ z=E+\i\eta $ satisfying $  N^{-1+\eps_0}<\eta<1 $, $ |E|<2-\xi $, we have
\begin{equation}\label{e.ImprovedLocalAverage}
\left| \im \frakS_t(z) - e^{-\frac{t}{2}}\dfrac{\im S_t(z)}{\im S_0(z_t)} \im \frakS_0(z_t) \right| \prec \left( \dfrac{(Nt)^\alpha}{N^2 t \eta} + \dfrac{1}{Nt} \right)
\end{equation}
\end{lemma}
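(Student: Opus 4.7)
The plan is to integrate the stochastic advection equation \eqref{e.AdvectionEqn} (with $\varphi_k$ in place of $\psi_k$) along the deterministic characteristic $s \mapsto z_{t-s}$ from \eqref{e.Characteristic}. By construction $\tfrac{d}{ds} z_{t-s} = -(m_\sc(z_{t-s}) + z_{t-s}/2)$, so moving along this curve exactly cancels the $(m_\sc + z/2)\partial_z \frakS$ part of the drift. An Itô computation for $f_s := \frakS_s(z_{t-s})$ and integration from $s=0$ to $s=t$ then produce the identity
\begin{equation*}
\frakS_t(z) - \frakS_0(z_t) = \int_0^t \big(S_s(z_{t-s})-m_\sc(z_{t-s})\big)\,\partial_z \frakS_s(z_{t-s})\,ds + \mathcal{E}_t + M_t,
\end{equation*}
where $\mathcal{E}_t$ collects the $\tfrac{1}{4N}\partial_{zz}\frakS$ and principal-value contributions of \eqref{e.AdvectionEqn}, and $M_t$ is the associated martingale.

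Taking imaginary parts reduces the proof to bounding three pieces. The martingale $M_t$ has quadratic variation $\int_0^t \tfrac{e^{-s}}{N}\sum_k \tfrac{|\varphi_k(s)|^2}{|s_k(s)-z_{t-s}|^4}\,ds$; using rigidity (Lemma \ref{l.Rigidity}), the rough decay estimate Lemma \ref{lem:phi_decay_rough}, and $\im z_{t-s}\sim (t-s)+\eta$ from Lemma \ref{l.Geometry}, this is $\prec 1/(Nt)^2$, so Lemma \ref{lem:Martingale} yields $|M_t|\prec 1/(Nt)$ with overwhelming probability. The lower-order drift $\mathcal{E}_t$ is controlled by the same ingredients and contributes another term of order $1/(Nt)$.

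The main obstacle, and the point at which the hypothesis $\calH_\alpha$ becomes essential, is to bound the local-law integral by $(Nt)^\alpha/(N^2 t\eta)$ while simultaneously recognizing its leading piece as the correction $e^{-t/2}\tfrac{\im S_t(z)}{\im S_0(z_t)}\im \frakS_0(z_t)$. The natural approach is to split $\varphi_k(s) = \uu_k(s) + (\varphi_k(s) - \uu_k(s))$ inside $\partial_z\frakS_s$. By its definition \eqref{eq:Def_hatphi}, $\uu_k(s)$ is a deterministic linear functional of the initial data $\{\sigma_j(H)-\sigma_j(G)\}$ built precisely so that the associated weighted Stieltjes transform obeys pure advection; using Lemma \ref{l.ApproxUbar} to replace discrete sums by integrals against $\rho_\sc$, one identifies its contribution with $e^{-t/2}\tfrac{\im S_t(z)}{\im S_0(z_t)}\im \frakS_0(z_t)$ up to errors of order $1/(Nt)$. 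For the fluctuation part, the pointwise bound $|\varphi_k(s)-\uu_k(s)|\prec (Nt)^\alpha/(N^2 t)$ from $\calH_\alpha$ combined with $\sum_k (\im z_{t-s})/|s_k-z_{t-s}|^2 = 2N\,\im S_s(z_{t-s}) \lesssim N$ in the bulk yields the fluctuation derivative bound $\lesssim (Nt)^\alpha/(Nt\,\im z_{t-s})$; multiplying by the local-law estimate $|S_s - m_\sc|\prec 1/(N\,\im z_{t-s})$ from \eqref{eq:Local_SC_Law} and integrating $\int_0^t ds/(\im z_{t-s})^2 \lesssim 1/\eta$ using Lemma \ref{l.Geometry} produces exactly the claimed $(Nt)^\alpha/(N^2 t\eta)$ contribution. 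The most delicate step in execution will be carrying out the identification of the $\uu_k$-contribution with the correction factor and making the rigidity, local-law, and sum-to-integral approximations line up cleanly so that the discarded errors are absorbed into the $1/(Nt)$ term.
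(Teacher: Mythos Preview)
Your overall plan—run the characteristic method on $f_s=\frakS_s(z_{t-s})$ and bound the resulting drift, principal-value, and martingale terms—is exactly the right framework, and your estimates for $\mathcal{E}_t$ and $M_t$ via Lemma~\ref{lem:phi_decay_rough}, rigidity, and Lemma~\ref{l.Geometry} are correct and match the paper.

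The gap is in how you produce the correction factor $e^{-t/2}\tfrac{\im S_t(z)}{\im S_0(z_t)}\im\frakS_0(z_t)$. You claim that splitting $\varphi_k(s)=\uu_k(s)+(\varphi_k(s)-\uu_k(s))$ \emph{inside the local-law integral} $\int_0^t(S_s-m_\sc)\partial_z\frakS_s\,ds$ lets you ``identify'' the $\uu_k$-contribution with this correction. This is not how the correction arises: that integral is entirely an error term (it carries the small factor $S_s-m_\sc$), and the $\uu_k$ are not constructed to make a weighted Stieltjes transform obey pure advection—they are Poisson-type smoothings of the initial gaps, approximating $\varphi_k$ pointwise. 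If you actually try to match the $\uu_k$-piece of the integral to $c\bigl(e^{-t/2}\im S_t(z)-\im S_0(z_t)\bigr)$, you are implicitly also running the full SDE for $e^{-s/2}S_s(z_{t-s})$ along the characteristic, and you would need to control \emph{its} martingale and lower-order terms as well, none of which you mention.

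The paper sidesteps this by subtracting the reference flow up front: set $c=\tfrac{\im\frakS_0(z_t)}{\im S_0(z_t)}$ and study
\[
g_u(z)=\frakS_u(z_{t-u})-c\,e^{-u/2}S_u(z_{t-u}).
\]
Since $e^{-u/2}S_u$ satisfies the \emph{same} stochastic advection equation \eqref{e.AdvectionEqn} with weights $\tfrac{1}{2N}$, the combined quantity $g_u$ obeys that equation with weights $\theta_k(u)=\varphi_k(u)-\tfrac{c}{2N}$; moreover $\im g_0=0$ by construction and $\im g_t$ is exactly the left-hand side of \eqref{e.ImprovedLocalAverage}. Now your decomposition enters, but at the level of $\theta_k$ rather than inside the integral: one writes $\theta_k=(\varphi_k-\uu_k)+(\uu_k-\tfrac{c}{2N})$, applies $\calH_\alpha$ to the first piece and Lemma~\ref{l.ApproxUbar} (equation \eqref{e.ApproxUbar}) to the second. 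Feeding these bounds into the drift and martingale integrals (and using the crude bound \eqref{e.AprioriDelta} near the soft edge and for $u<N^{-1+\eps_0}$ where $\calH_\alpha$ is unavailable) then gives \eqref{e.ImprovedLocalAverage}. The fluctuation bound you wrote for the $(\varphi_k-\uu_k)$ part is correct and appears verbatim in this argument.
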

\begin{proof}
Fix $ t $ and consider the function
$$ g_u(z) := \frakS_u(z_{t-u})-e^{-\frac{u}{2}}\dfrac{\im \frakS_0(z_t)}{\im S_0(z_t)} S_u(z_{t-u}),\ \ 0 \leq u \leq t. $$
An observation is that $ e^{-u/2}S_u(z) $ satisfy the same stochastic advection equation \eqref{e.AdvectionEqn} with $ \varphi_k $ replaced by $ \tfrac{1}{2N} $. Therefore, we have
\begin{equation}\label{eq:Dynamics_dg}
\begin{aligned}
\d g_u &= \left( S_u(z_{t-u}) - m_{\sc}(z_{t-u}) \right)  \left( \partial_z \frakS_u(z_{t-u}) - e^{-\frac{u}{2}}\dfrac{\im \frakS_0(z_t)}{\im S_0(z_t)} \partial_z S_u(z_{t-u}) \right) \d u\\
&\quad + \dfrac{1}{4N}\left( \partial_{zz} \frakS_u(z_{t-u}) - e^{-\frac{u}{2}}\dfrac{\im \frakS_0(z_t)}{\im S_0(z_t)} \partial_{zz} S_u(z_{t-u}) \right)\d u\\
&\quad + \dfrac{e^{-\frac{u}{2}}}{2N} \sum_{-N \leq k \leq N} \dfrac{\theta_k(u)}{(s_k(u)-z_{t-u})^2 (s_k(u) + z_{t-u})} \d u\\
&\quad - \dfrac{e^{-\frac{u}{2}}}{\sqrt{N}} \sum_{-N \leq k \leq N} \dfrac{\theta_k(u)}{(s_k(u) - z_{t-u})^2} \d B_k,
\end{aligned}
\end{equation}
where
$$ \theta_k(u) = \varphi_k(u) - \frac{\im \frakS_0(z_t)}{2N \im S_0(z_t)}. $$
Similarly as in the proof of Proposition \ref{p.aPriori}, for $ \eps>0 $ and $ 0\leq \ell,m,p \leq N^{10} $, define $ t_\ell=\ell N^{-10} $ and $ z^{(m,p)} = E_m + \i \eta_p $ where $ \int_{-\infty}^{E_m} \d \rho = mN^{-10} $ and $ \eta_p = N^{-1+4\eps}+pN^{-10} $. We also pick $ c>0 $ such that $ \lfloor (1-c) N \rfloor =\arg\min_{k} |\gamma_k - (2-\tfrac{\xi}{10})| $. Assuming $ \calH_\alpha $, let $ \eps_0>0 $ be the arbitrarily small scale in the hypothesis. Let $ C>0 $ be some suitably large constant. Recall the stopping times
\begin{align*}
\tau_0 &=\inf\left\{ 0 \leq u \leq 1: \exists -N \leq k \leq N\ s.t.\  |s_k(u)-\gamma_k|>N^{-\frac{2}{3}+\eps}(N+1-|k|)^{-\frac{1}{3}} \right\},\\
\tau_1 &=\inf\left\{ N^{-1+\eps_0} \leq u \leq 1: \exists -N \leq k \leq N\ s.t.\ |\varphi_k(u)| > \dfrac{N^{C\eps}}{N}\dfrac{1}{\left((\frac{N+1-|k|}{N})^{1/3} \vee u\right)} \right\},
\end{align*}
and consider the new stopping times
\begin{align*}
\tau_2 &=\inf \left\{ N^{-1+\eps_0} \leq u \leq 1 : \exists k \in \llbracket (c-1)N,(1-c)N \rrbracket\ s.t.\ |\varphi_k(u) - \uu_k(u)| > N^{C\eps} \dfrac{(Nt)^\alpha}{N^2 t} \right\},\\
\tau_{\ell,m,p} &=\inf\left\{0 \leq u \leq t_\ell : \left| \im g_u^{(t_\ell)}(z^{(m,p)}) \right|> N^{C\eps} \left( \dfrac{(Nt)^\alpha}{N^2 t \eta} + \dfrac{1}{Nt} \right) \right\}\\
\tau &= \min\{\tau_0,\tau_1,\tau_2,\tau_{\ell,m,p} : 0 \leq \ell,m,p \leq N^{10},|E_m|<2-\xi\}.
\end{align*}
Recall the convention $ \inf \emptyset =\infty $. As shown in the proof of Proposition \ref{p.aPriori}, it suffices to show that $ \tau=\infty $ with overwhelming probability.

A key ingredient for the analysis of the dynamics of $ g_u $ is the following estimates on $ \theta_k(u) $. To do this, we fix some $ t=t_\ell $ and $ z=z^{(m,p)} $ with $ |E_m|<2-\xi $, and let $ N^{-1+\eps_0} \leq u \leq t \wedge \tau $ and $ k \in \llbracket (c-1)N,(1-c)N \rrbracket $.

On the one hand, we have a direct a priori estimate. Since $ u \leq \tau_1 $, we have
$$ |\varphi_k(u)| \lesssim  N^{-\frac{2}{3}+C\eps}(N+1-|k|)^{-\frac{1}{3}}. $$
Moreover, note that for $ z=E+i\eta $ with $ E $ in the bulk and $ t<1 $, uniformly we have $ \im S_0(z_t) \gtrsim 1 $. By Lemma \ref{l.InitialCondition}, this shows
$$ \dfrac{\im \frakS_0(z_t)}{2N \im S_0(z_t)} \lesssim N^{-1+\eps} \lesssim N^{-\frac{2}{3}+C\eps}(N+1-|k|)^{-\frac{1}{3}}. $$
As a consequence, we have
\begin{equation}\label{e.AprioriDelta}
|\theta_k(u)| \lesssim  N^{-\frac{2}{3}+C\eps}(N+1-|k|)^{-\frac{1}{3}}.
\end{equation}

On the other hand, the estimate can also be obtained via approximation
$$ |\theta_k(u)| \leq |\varphi_k(u) - \uu_k(u)|+|\uu_k(u) - \uu_j(t)|+\left| \uu_j(t) - \frac{\im f_0(z_t)}{2N \im S_0(z_t)} \right|. $$
For the first term, since $ u \leq \tau_2 $ we have
$$ |\varphi_k(u) - \uu_k(u)| \leq N^{C\eps} \dfrac{(Nu)^a}{N^2 u}. $$
Choosing $ |\gamma_j -E|\leq N^{-1+2\eps} $, the remaining two terms are controlled by Lemma \ref{l.SizeUbar}
\begin{align*}
|\uu_k(u) - \uu_j(t)|+\left| \uu_j(t) - \frac{\im f_0(z_t)}{2N \im S_0(z_t)} \right| &\lesssim N^\eps \left( \dfrac{|k-j|}{N^2 u}+\dfrac{|t-u|}{Nu} + \dfrac{|E-\gamma_j|}{Nt} + \dfrac{\eta}{Nt} \right)\\
& \lesssim N^{C\eps} \left( \dfrac{|\gamma_k -E|}{Nu}+\dfrac{t-u}{Nu}+\dfrac{\eta}{Nt} \right).
\end{align*}
Together, we decompose the error terms into two parts and obtain the following
\begin{equation}\label{e.EstimateDelta}
|\theta_k(u)| \leq N^{C\eps} \left( \dfrac{|\gamma_k -E|}{Nu}+\dfrac{t-u}{Nu}+\dfrac{\eta}{Nt} +\dfrac{(Nu)^\alpha}{N^2 u} \right) =: \varphi^C \left( \dfrac{|\gamma_k -E|}{Nu} + \La(a,N,t,\eta,u) \right)
\end{equation}

With the above control on $ \theta_k(u) $, the dynamics \eqref{eq:Dynamics_dg} can be used to bound $ \im(g_t-g_0) $ similarly as in Proposition \ref{p.aPriori}. For the first term, we have
\begin{equation}\label{eq:Modify.LocalAverage_SCL}
\begin{aligned}
&\quad \int_0^{t\wedge\tau} |S_u(z_{t-u})-m_{\sc}(z_{t-u})|\left| \partial_z \frakS_u(z_{t-u}) - e^{-\frac{u}{2}}\dfrac{\im \frakS_0(z_t)}{\im S_0(z_t)} \partial_z S_u(z_{t-u}) \right| \d u\\
&\leq \int_0^{t\wedge\tau} \dfrac{N^{C\eps}}{{N \im (z_{t-u})}}  \sum_{-N \leq k \leq N} \dfrac{|\theta_k(u)|}{|s_k(u)-z_{t-u}|^2} \d u\\
&\leq \int_0^{t\wedge\tau} \dfrac{N^{C\eps}}{{N \im (z_{t-u})}} \left( \sum_{|k| \geq (1-c)N}\dfrac{|\theta_k(u)|}{|\gamma_k-z_{t-u}|^2} + \sum_{|k| < (1-c)N}\dfrac{|\theta_k(u)|}{|\gamma_k-z_{t-u}|^2} \right) \d u\\
&=: I_1 + I_2.
\end{aligned}
\end{equation}
For the soft edge part $ |k| \geq (1-c)N $, using \eqref{e.AprioriDelta} we obtain
\begin{equation}\label{eq:Modify.LocalAverage_SoftEdge}
\begin{aligned}
I_1 &\leq N^{C\eps} \int_0^{t \wedge \tau}\dfrac{1}{{N \im (z_{t-u})}} \sum_{|k| \geq (1-\alpha)N}  N^{-\frac{2}{3}+C\eps}(N+1-|k|)^{-\frac{1}{3}} \d u\\
& \leq  N^{C\eps} \int_0^{t \wedge \tau} \dfrac{1}{{N \im (z_{t-u})}} \d u\\
& \leq N^{C\eps} \dfrac{\log(1+Nt)}{N}.
\end{aligned}
\end{equation}
For $ I_2 $, note that
\begin{align*}
\sum_{|k| < (1-c)N}\dfrac{|\theta_k(u)|}{|\gamma_k - z_{t-u}|^2} &\leq N^{C\eps} \left( \dfrac{1}{Nu} \sum_{|k| < (1-c)N}\dfrac{|\gamma_k-E|}{|\gamma_k - z_{t-u}|^2} + \La \sum_{|k| < (1-c)N}\dfrac{1}{|\gamma_k - z_{t-u}|^2} \right) \\
&\leq N^{C\eps} \left( \dfrac{1}{u}+N\dfrac{\La}{\eta+t-u} \right)
\end{align*}
This yields
\begin{equation}\label{eq:Modify.LocalAverage_Bulk_LongTime}
\begin{aligned}
&\quad \int_{N^{-1+\eps_0}}^{t \wedge \tau} \dfrac{N^{C\eps}}{{N \im (z_{t-u})}} \sum_{|k| < (1-\alpha)N}\dfrac{|\theta_k(u)|}{|\gamma_k - z_{t-u}|^2} \d u\\
& \leq N^{C\eps} \int_{N^{-1+\eps_0}}^{t \wedge \tau} \dfrac{1}{{N (\eta+t-u)}} \left( \dfrac{1}{u}+N\dfrac{\La}{\eta+t-u} \right) \d u\\
& \leq N^{C\eps} \int_{N^{-1+\eps_0}}^{t \wedge \tau} \dfrac{1}{{N (\eta+t-u)}} \left( \dfrac{1}{u}+N\dfrac{1}{\eta+t-u}\left( \dfrac{t-u}{Nu}+\dfrac{\eta}{Nt} +\dfrac{(Nu)^\alpha}{N^2 u} \right) \right) \d u\\
& \leq N^{C\eps} \left( \dfrac{(Nt)^\alpha}{N^2 t\eta} + \dfrac{1}{Nt} \right)
\end{aligned}
\end{equation}
Moreover, without loss of generality, we may assume that $ \eps_0 \sim \eps $. Then, using \eqref{e.AprioriDelta} we obtain 
\begin{equation}\label{eq:Mofity.LocalAverage_Bulk_ShortTime}
\int_0^{N^{-1+\eps_0}} \dfrac{N^{C\eps}}{{N \im (z_{t-u})}} \sum_{|k| < (1-\alpha)N}\dfrac{|\theta_k(u)|}{|\gamma_k - z_{t-u}|^2} \d u \leq \dfrac{N^{C\eps}}{N^{2}(\eta+t)^{2}}
\end{equation}
Together with previous estimates, this shows
\begin{equation}\label{eq:Mofity.LocalAverage_SCL_Bound}
\int_0^{t\wedge\tau} |S_u(z_{t-u})-m_{\sc}(z_{t-u})|\left| \partial_z \frakS_u(z_{t-u}) - e^{-\frac{u}{2}}\dfrac{\im \frakS_0(z_t)}{\im S_0(z_t)} \partial_z S_u(z_{t-u}) \right| \d u
\leq N^{C\eps} \left( \dfrac{(Nt)^\alpha}{N^2 t\eta} + \dfrac{1}{Nt} \right).
\end{equation}
Similarly, we have
$$ \int_0^{t \wedge \tau}\dfrac{1}{4N}\left| \partial_{zz} \frakS_u(z_{t-u}) - e^{-\frac{u}{2}}\dfrac{\im \frakS_0(z_t)}{\im S_0(z_t)} \partial_{zz} S_u(z_{t-u})\right| \d u \leq N^{C\eps} \left( \dfrac{(Nt)^\alpha}{N^2 t \eta}+\dfrac{1}{Nt} \right) $$
and
$$ \int_0^{t \wedge \tau} \left|\dfrac{e^{-\frac{u}{2}}}{2N} \sum_{-N \leq k \leq N} \dfrac{\theta_k(u)}{(s_k(u)-z_{t-u})^2 (s_k(u) + z_{t-u})}\right| \d u \leq N^{C\eps} \left( \dfrac{(Nt)^\alpha}{N^2 t \eta}+\dfrac{1}{Nt} \right) $$

It suffices to bound the martingale term
$$ M_s := \int_0^{s} \dfrac{e^{-\frac{u}{2}}}{\sqrt{N}} \sum_{-N \leq k \leq N} \dfrac{\theta_k(u)}{(s_k(u) - z_{t-u})^2} \d B_k .$$
Again we decompose the integral into two parts
\begin{align*}
\left\langle M \right\rangle_{t \wedge \tau} &\lesssim \dfrac{1}{N}\int_0^{t \wedge \tau} \sum_{|k| \geq (1-c)N}\dfrac{|\theta_k(u)|^2}{|\gamma_k-z_{t-u}|^4} \d u + \dfrac{1}{N}\int_0^{t \wedge \tau} \sum_{|k| < (1-c)N}\dfrac{|\theta_k(u)|^2}{|\gamma_k-z_{t-u}|^4} \d u\\
&=: J_1 + J_2.
\end{align*}
The contribution from the soft edge is easy to control
$$ J_1 \leq \dfrac{N^{C\eps}}{N}\int_0^{t \wedge \tau} \sum_{|k| \geq (1-c)N}\left( N^{-\frac{2}{3}} (N+1-|k|)^{-\frac{1}{3}} \right)^2 \d u \leq N^{C\eps} \dfrac{t}{N^2} $$
For the other term, we use both \eqref{e.AprioriDelta} and \eqref{e.EstimateDelta}
\begin{align*}
J_2 &\lesssim \dfrac{N^{C\eps}}{N} \int_0^{t \wedge \eta} \dfrac{1}{N^2}\sum_{|k| < (1-c)N} \dfrac{1}{|\gamma_k - z_{t-u}|^4} \d u\\
&\quad + \dfrac{N^{C\eps}}{N} \int_{t \wedge \eta}^{t \wedge \tau} \sum_{|k| < (1-c)N} \dfrac{1}{|\gamma_k - z_{t-u}|^4} \qth{ \left( \dfrac{|\gamma_k -E|^2}{N^2 u^2} + \Lambda^2 \right) \wedge \dfrac{1}{N^2} } \d u.
\end{align*}
Note that
$$ \dfrac{N^{C\eps}}{N} \int_0^{t \wedge \eta} \dfrac{1}{N^2}\sum_{|k| < (1-c)N} \dfrac{1}{|\gamma_k - z_{t-u}|^4} \d u \leq \dfrac{N^{C\eps}}{N} \int_0^{t \wedge \eta} \dfrac{1}{N(\eta+t-u)^3} \d u \leq \dfrac{N^{C\eps}}{N^2 t^2}. $$
For the other term, without loss of generality we may assume $ \eta<t $. For the $ \tfrac{|\gamma_k -E|^2}{N^2 u^2} $ term, we have
\begin{align*}
&\quad \dfrac{N^{C\eps}}{N} \int_{\eta}^{t} \sum_{|k| < (1-c)N} \dfrac{1}{|\gamma_k - z_{t-u}|^4}  \left( \dfrac{|\gamma_k -E|^2}{N^2 u^2} \wedge \dfrac{1}{N^2} \right) \d u\\
&\leq \dfrac{N^{C\eps}}{N^2 t^2} + \dfrac{N^{C\eps}}{N} \int_{\eta}^{t} \sum_{k: |\gamma_k-E| \leq u} \dfrac{1}{|\gamma_k - z_{t-u}|^4} \dfrac{|\gamma_k -E|^2}{N^2 u^2} \d u\\
&\leq \dfrac{N^{C\eps}}{N^2 t^2} + N^{C\eps} \int_{\eta}^{t} \dfrac{1}{N^2 u^2} \left( \int_{-u}^u \dfrac{x^2}{x^4+(\eta+t-u)^4} \d x \right) \d u\\
& \leq \dfrac{N^{C\eps}}{N^2 t^2}.
\end{align*}
For the contribution of $ \Lambda(a,N,t,\eta,u) $, we have
\begin{align*}
&\quad \dfrac{N^{C\eps}}{N} \int_{\eta}^{t} \sum_{|k| < (1-c)N} \dfrac{1}{|\gamma_k - z_{t-u}|^4} \left( \Lambda^2 \wedge \dfrac{1}{N^2} \right) \d u\\
&\leq \dfrac{N^{C\eps}}{N} \int_{\eta}^{t} \sum_{|k| < (1-c)N} \dfrac{1}{|\gamma_k - z_{t-u}|^4}  \left[ \left( \dfrac{(t-u)^2}{N^2 u^2} + \dfrac{(Nu)^{2\alpha}}{N^4 u^2} + \dfrac{\eta^2}{N^2 t^2} \right) \wedge \dfrac{1}{N^2} \right] \d u\\
& \leq N^{C\eps}\int_{\eta}^{t} \dfrac{1}{(\eta+t-u)^3}\left[ \dfrac{(Nu)^{2\alpha}}{N^4 u^2} + \dfrac{\eta^2}{N^2 t^2} + \left( \dfrac{(t-u)^2}{N^2 u^2} \wedge \dfrac{1}{N^2} \right) \right] \d u.
\end{align*}
The first two terms in the bracket give us
$$ \int_{\eta}^{t} \dfrac{1}{(\eta+t-u)^3} \left( \dfrac{(Nu)^{2\alpha}}{N^4 u^2} + \dfrac{\eta^2}{N^2 t^2} \right) \d u \leq \dfrac{(Nt)^{2\alpha}}{N^4 t^2 \eta^2}+\dfrac{1}{N^2 t^2}. $$
For the remaining term, we have
\begin{multline*}
\int_{\eta}^{t} \dfrac{1}{(\eta+t-u)^3} \left( \dfrac{(t-u)^2}{N^2 u^2} \wedge \dfrac{1}{N^2} \right) \d u\\
\leq \int_{\eta}^{\frac{t}{2}} \dfrac{1}{(\eta+t-u)^3} \dfrac{1}{N^2} \d u + \int_{\frac{t}{2}}^t \dfrac{1}{(\eta+t-u)^3} \dfrac{(t-u)^2}{N^2 u^2} \d u \leq \dfrac{\log N}{N^2 t^2}
\end{multline*}
Combining these results shows
$$ \left\langle M \right\rangle_{t \wedge \tau} \leq N^{C\eps} \left( \dfrac{(Nt)^{2\alpha}}{N^4 t^2 \eta^2}+\dfrac{1}{N^2 t^2} \right). $$
Using Lemma \ref{lem:Martingale} and a union bound, for any fixed large $ D>0 $ and sufficiently large $ N > N_0(\eps,D) $, we have
$$ \P\left( \sup_{\ell,m,p,0\leq s \leq t \wedge \tau,|E_m|<2-\xi} |M_s| \leq N^{C\eps} \left( \dfrac{(Nt)^{\alpha}}{N^2 t \eta}+\dfrac{1}{N t} \right) \right) > 1-N^{-D}. $$

Together with previous estimates, with overwhelming probability we have
$$  \sup_{\ell,m,p,0\leq s \leq t \wedge \tau,|E_m|<2-\xi} |g_s(z)| \leq N^{C\eps} \left( \dfrac{(Nt)^\alpha}{N^2 t \eta} + \dfrac{1}{Nt} \right).  $$
This implies $ \min_{\ell,m,p} \{\tau_{\ell,m,p}\}=\infty $ with overwhelming probability. Moreover, we have shown in Lemma \ref{l.Rigidity}, Lemma \ref{lem:phi_decay_rough} that $ \tau_0=\infty $ and $ \tau_1=\infty $ with overwhelming probability. Assuming the hypothesis $ \calH_\alpha $, we also have $ \tau_2=\infty $ with overwhelming probability. These imply that $ \tau=\infty $ with overwhelming probability. Hence we complete the proof.
\end{proof}

Now we move on to the short-range approximation of the dynamics. Recall that $ \{\varphi_k\} $ satisfies the parabolic equation \eqref{e.Parabolic}, and we rewrite it as
$$ \dfrac{\d}{\d t}\varphi_k =\left( \calP \varphi \right)_k $$
where the time-dependent operator $ \calP $ is defined in the following way: For $ f : \R \to \R^{2N} $,
$$ \left( \calP f \right)_k := \sum_{j \neq \pm k} c_{jk}(t)(f_j(t)-f_k(t)),\ \ \ c_{jk}(t):=\dfrac{1}{2N(s_j(t) - s_k(t))^2}. $$
Consider some parameter $ l=l(N,\alpha) $ which will be determined later, we decompose the operator $ \calP $ into two parts $ \calP=\calP_{\mathrm{short}}+\calP_{\mathrm{long}} $. The operators $ \calP_{\mathrm{short}} $ and $ \calP_{\mathrm{long}} $ represent the short-range interactions and long-range interactions respectively and are defined as follows
\begin{align*}
\left( \calP_{\mathrm{short}} f \right)_k &=\sum_{|j-k|\leq l} c_{jk}(t)(f_j(t) - f_k(t)),\\
\left( \calP_{\mathrm{long}} f \right)_k &=\sum_{|j-k|> l} c_{jk}(t)(f_j(t) - f_k(t)).
\end{align*}
Note that the operators $ \calP_{\mathrm{short}},\calP_{\mathrm{long}} $ are also time dependent. Let $ \Us(s,t) $ denote the semigroup associated with the operator $ \calP_{\mathrm{short}} $ in the sense
$$ \partial_t \Us(s,t)=\calP_{\mathrm{short}}(t)\Us(s,t),\ \ \Us(s,s)=\Id. $$
Also, let $ \sT $ denote the semigroup associated with $ \calP $.

To prove the short-range approximation, we need the following finite speed of propagation estimate for the semigroup. Such estimates were proved in \cite{che2019universality} with minor changes.

\begin{lemma}\label{l.FiniteSpeed}
For any fixed small $ c>0 $ and large $ D>0 $, there exists $ N_0(c,D) $ such that the following holds with probability at least $ 1-N^{-D} $. For any $ \eps>0 $, $ N>N_0 $, $ 0<u<v<1 $, $ l \geq N|u-v| $, $ |k| \leq (1-c)N $ and $ -N \leq j \leq N $ such that $ |k-j|>N^\eps l $, we have
\begin{equation}\label{e.FiniteSpeed}
\left( \Us (u,v) \de_k \right)(j) < N^{-D}.
\end{equation}
\end{lemma}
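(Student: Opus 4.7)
The plan is to prove the result by the standard finite-speed-of-propagation exponential moment method for short-range parabolic flows, as in \cite{che2019universality}; the only modifications concern the singular value setup and the particular rigidity bounds available here. Fix $k$ with $|k| \leq (1-c)N$ and consider $f(t) := \Us(u,t)\de_k$, which solves $\partial_t f = \calP_{\mathrm{short}}(t) f$ with $f(u) = \de_k$. Since $\calP_{\mathrm{short}}$ is the generator of a continuous-time (sub-)Markov chain (nonnegative off-diagonal entries and vanishing row sums), we have $f_j(t) \geq 0$ and $\sum_j f_j(t) = 1$ throughout $[u,v]$.

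Next, for a parameter $\lambda \in (0,1]$ to be tuned I would introduce the weight $w_j := \exp(\lambda |j-k|/l)$ and the energy $F(t) := \sum_j w_j f_j(t)^2$. A direct symmetrization using $c_{ij}=c_{ji}$ yields
\begin{equation*}
\partial_t F = -\sum_{i,j} c_{ij}(t)\, w_j (f_i - f_j)^2 + \sum_{i,j} c_{ij}(t)(w_i - w_j) f_j^2,
\end{equation*}
and I drop the non-positive first term. Because $c_{ij}\neq 0$ only for $|i-j|\leq l$, on such pairs $|w_i - w_j| \leq C\lambda l^{-1}|i-j|\,w_j$ (using $\lambda |j-k|/l \leq \lambda \leq 1$ to linearize the exponential).

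The third and most delicate step is the kernel estimate. Rigidity (Lemma~\ref{l.Rigidity}) gives $|s_i - \gamma_i| \prec N^{-2/3}(N+1-|i|)^{-1/3}$, but does not by itself lower-bound the gaps $|s_i - s_j|$, which could make $c_{ij}(t) = (2N(s_i-s_j)^2)^{-1}$ very large. I would invoke the bulk gap / level-repulsion estimates for singular values (as in \cite{erdos2012local,che2019universality}) to obtain, uniformly in $t \in [0,1]$ and pairs with $|i|,|j|\leq(1-c/2)N$, the bound $|s_i(t)-s_j(t)| \gtrsim N^{-1-\eps'}|i-j|$ with overwhelming probability for any fixed $\eps'>0$. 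This yields $c_{ij}(t) \lesssim N^{1+2\eps'}(i-j)^{-2}$, hence $\sum_{0<|i-j|\leq l} c_{ij}(t)|i-j|/l \lesssim N^{1+2\eps'} l^{-1} \log l$. Combining with the previous step, $\partial_t F \leq C\lambda N^{1+2\eps'} l^{-1}\log l \cdot F$, and integrating over $[u,v]$ while using $l\geq N(v-u)$ gives $F(v) \leq F(u)\exp(C\lambda N^{3\eps'})$.

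Finally, since $F(u)=1$ and $w_j \geq \exp(\lambda N^\eps)$ for every $j$ with $|j-k|>N^\eps l$, we get $(\Us(u,v)\de_k)(j)^2 \leq F(v)/w_j \leq \exp(C\lambda N^{3\eps'} - \lambda N^\eps)$. Choosing $\eps' = \eps/10$ and $\lambda$ a sufficiently large constant depending on $D$, the bound becomes $\exp(-\lambda N^\eps/2) \ll N^{-D}$ for $N$ large enough, which proves the claim. The main obstacle is step three: obtaining the uniform-in-time lower bound on bulk singular value gaps needed to control the worst-case $c_{ij}(t)$; once that input is in place, the remainder is an essentially standard Gronwall/exponential-moment calculation.
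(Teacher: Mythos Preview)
Your proposal is correct and follows precisely the exponential-moment / Gronwall argument that the paper itself defers to: the paper gives no proof of its own for this lemma, stating only that ``such estimates were proved in \cite{che2019universality} with minor changes.'' The bulk gap lower bound you flag as the main obstacle is indeed the key external input, and it is available for the interpolated process $s^{(\nu)}(t)$ via level repulsion for the DBM \eqref{e.SymDBM}; the remaining cosmetic points are that your parenthetical should read $\lambda|i-j|/l\le\lambda$ (not $\lambda|j-k|/l$), and that any fixed $\lambda\in(0,1]$ already suffices since $\exp(-\lambda N^{\eps}/2)\ll N^{-D}$.
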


With Lemma \ref{l.FiniteSpeed}, we have the following short-range approximation estimate. In particular, this short-range approximation can be improved based on the hypothesis $ \calH_\alpha $.

\begin{lemma}\label{l.ShortRange}
Assume $ \calH_{\alpha} $. Let $ c>0 $ be any fixed small constant. There exists a constant $ C>0 $ such that for any $ \eps>0 $, $ N^{-1+C\eps} <t<1 $, $ \tfrac{t}{2} \leq u<v \leq t $, $ l \geq N^{\eps} $ and $ |k| \leq (1-c)N $, we have
\begin{equation}\label{e.ShortRange}
\left| \left(\left( \sT(u,v) - \Us(u,v) \right)\varphi(u)\right)_k \right| \prec (v-u) \left( \dfrac{N}{l}\dfrac{(Nt)^\alpha}{N^2 t} + \dfrac{1}{Nt} \right).
\end{equation}
\end{lemma}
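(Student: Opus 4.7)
The plan is to use a Duhamel identity and then bound the resulting long-range source using the hypothesis $\calH_\alpha$ together with the smoothness of $\uu$ from Lemma \ref{l.ApproxUbar}. Since $\varphi(w) := \sT(u,w)\varphi(u)$ solves $\partial_w \varphi = \calP(w)\varphi$ and $f(w) := \Us(u,w)\varphi(u)$ solves $\partial_w f = \calP_{\mathrm{short}}(w) f$ with $\varphi(u) = f(u)$, variation of constants yields
\begin{equation*}
\big((\sT(u,v) - \Us(u,v))\varphi(u)\big)_k = \int_u^v \big(\Us(w,v)\,\calP_{\mathrm{long}}(w)\,\varphi(w)\big)_k \,\d w.
\end{equation*}

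I next combine the finite speed of propagation (Lemma \ref{l.FiniteSpeed}) with the $\ell^\infty$-contractivity of the symmetric Markov semigroup $\Us$, splitting $\Us(w,v)$ into factors over subintervals of length at most $l/N$ so that the cutoff hypothesis of Lemma \ref{l.FiniteSpeed} is satisfied on each factor. This shows that $(\Us(w,v)h)_k$ is, up to $N^{-D}$ error, a convex combination of values $\{h_j\}$ over indices with $|j-k| \lesssim N^{\eps} N(v-w)$. For $|k| \leq (1-c)N$ and $v - w \leq t/2 \leq 1/2$, these remaining $j$ indices stay in an enlarged bulk $|j| \leq (1 - c/2)N$. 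It therefore suffices to obtain a uniform $\ell^\infty$-bound on $(\calP_{\mathrm{long}}(w)\varphi(w))_j$ for such bulk $j$ and $w \sim t$.

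To this end, introduce the reference profile $\uu(w)$ and split
\begin{equation*}
\big(\calP_{\mathrm{long}}(w)\varphi(w)\big)_j \;=\; \sum_{|i-j|>l} c_{ij}(\uu_i - \uu_j) \;+\; \sum_{|i-j|>l} c_{ij}\big((\varphi_i - \uu_i) - (\varphi_j - \uu_j)\big).
\end{equation*}
Restricting both sums to bulk indices $|i| \leq (1 - c/4)N$, rigidity gives $c_{ij} \sim N/|i-j|^2$; Lemma \ref{l.ApproxUbar} yields $|\uu_i(w) - \uu_j(w)| \lesssim N^\eps |i-j|/(N^2 w)$, making the first sum contribute $\lesssim N^{\eps}\log N/(Nw)$; and $\calH_\alpha$ (applied with constant $c/4$ in place of $c$) yields $|\varphi_\cdot(w) - \uu_\cdot(w)| \prec (Nw)^\alpha/(N^2 w)$, making the second sum contribute $\lesssim (N/l)\cdot(Nw)^\alpha/(N^2 w)$. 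The edge contributions (summands with $|i| > (1-c/4)N$) are handled by combining the uniform estimate $|\varphi_i(w)| \prec 1/(Nw)$, obtained from both cases of Lemma \ref{lem:phi_decay_rough} together with $((N+1-|i|)/N)^{1/3} \leq 1$, with the bound $c_{ij} \sim 1/N$ valid for bulk $j$ and edge $i$; summing over the $O(N)$ edge indices gives a total edge contribution $\prec 1/(Nw)$.

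Using $w \sim t$ throughout and integrating over $[u,v]$ of length $v-u$ concludes the proof, with the accumulated $N^{O(\eps)}$ losses absorbed by the condition $t > N^{-1+C\eps}$. The main obstacle will be the bookkeeping required to ensure that (i) the finite-speed-of-propagation argument keeps the analysis within the bulk where $\calH_\alpha$ and Lemma \ref{l.ApproxUbar} are available, and (ii) the edge contributions, which are controlled only by the rough rigidity-based bound of Lemma \ref{lem:phi_decay_rough}, still respect the sharp $1/(Nt)$ rate; both are resolved by inserting the intermediate cutoff at $(1 - c/4)N$ and invoking the elementary inequality $1/N \leq 1/(Nt)$ valid under $t \leq 1$.
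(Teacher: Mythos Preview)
Your proposal is correct and follows essentially the same route as the paper's proof: Duhamel's formula, finite speed of propagation plus $L^\infty$-contraction of $\Us$ to reduce to a sup-bound on $(\calP_{\mathrm{long}}\varphi)_j$ for bulk $j$, then a bulk/edge splitting where the bulk part is controlled via $\calH_\alpha$ together with the smoothness of $\uu$ (Lemma~\ref{l.ApproxUbar}) and the edge part via Lemma~\ref{lem:phi_decay_rough}. The only cosmetic differences are that the paper bounds $|\varphi_i-\varphi_j|$ directly by the triangle inequality through $\uu_i,\uu_j$ rather than writing your two-sum decomposition, and handles the edge with the sharper estimate $|\varphi_i|\lesssim N^{-2/3+C\eps}(N+1-|i|)^{-1/3}$ instead of your cruder $1/(Nw)$; both yield the same $1/(Nt)$ contribution after summing.
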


\begin{proof}
The Duhamel's principle implies
$$ \pth{ \pth{ \sT(u,v)-\Us(u,v) } \varphi(u) }_k = \int_u^v \pth{ \Us(s,v) [ \pth{\calP_{\mathrm{long}} \, \varphi}(s) ] }_k \d s. $$
On the event that Lemma \ref{l.FiniteSpeed} holds, for $  |k| \leq (1-3c)N $, the finite speed of propagation yields
$$ \pth{ \Us(s,v) [\pth{\calP_{\mathrm{long}} \, \varphi}(s)] }_k = \pth{ \Us(s,v) [\pth{\calP_{\mathrm{long}} \, \varphi \1_{\llbracket (2c-1)N,(1-2c)N \rrbracket} } (s)]  }_k + N^{-D}, $$
where $ (\varphi \1_{\llbracket (2c-1)N,(1-2c)N \rrbracket})_j = \varphi_j \1_{\llbracket (2c-1)N,(1-2c)N \rrbracket}(j) $.
Moreover, using the property that $ \Us $ is an $ L^\infty $ contraction, we have
\begin{equation}\label{eq:Tshort_Contraction}
\abs{ \pth{ \pth{\sT(u,v) - \Us(u,v)} \varphi(u) }_k } \leq |u-v| \sup_{|j| \leq (1-2c)N,u<s<v} \abs{ \pth{\calP_{\mathrm{long}} \, \varphi}_j(s) } + N^{-D}.
\end{equation}
For $ |i| \leq (1-c)N $, assuming $ \calH_\alpha $ and on the event that Lemma \ref{l.ApproxUbar} holds, there exists a constant $ C>0 $ so that
$$ \abs{ \varphi_i(s) - \varphi_j(s) } \leq \abs{ \varphi_i(s) - \uu_i(s) }+\abs{ \varphi_j(s) - \uu_j(s) } + \abs{\uu_i(s) - \uu_j(s)} \lesssim N^{C\eps} \pth{ \frac{(Nt)^\alpha}{N^2 t} + \frac{|i-j|}{N^2 t} }. $$
For $ |i|>(1-c)N $, Lemma \ref{lem:phi_decay_rough} yields
$$ |\varphi_i(s) - \varphi_j(s)| \leq |\varphi_i(s)| + |\varphi_j(s)| \lesssim N^{-\frac{2}{3}+C\eps} (N+1-|i|)^{-\frac{1}{3}}.  $$
Therefore, using rigidity of singular values, we have
\begin{align*}
\pth{\calP_{\mathrm{long}}\varphi}_j(s) &= \sum_{|i-j|>l} \frac{\varphi_i(s) - \varphi_j(s)}{2N (s_i(s) - s_j(s))^2}\\
&\lesssim N^{1+C\eps} \sum_{|i-j|>l} \frac{1}{(i-j)^2} \pth{\frac{(Nt)^\alpha}{N^2 t} + \frac{|i-j|}{N^2 t}} + N^{-1+C\eps} \sum_{|i|>(1-c)N} N^{-\frac{2}{3}} (N+1-|i|)^{-\frac{1}{3}}  \\
&\lesssim N^{C\eps} \pth{ \frac{N}{l} \frac{(Nt)^\alpha}{N^2 t} + \frac{1}{Nt} }.
\end{align*}
Combined with \eqref{eq:Tshort_Contraction}, this implies the desired claim with $ |k| \leq (1-3c)N $. Note that $ c>0 $ is arbitrary, and thus it concludes the proof.
\end{proof}

Further, we will show that we have nice control for a regularization of the short-range dynamics with a well-behaved initial data. To do this, we follow the techniques developed in \cite{bourgade2017eigenvector}. Consider some fixed times $ u<t $, the short-range parameter $ l $, and define an averaging space window scale $ r $. Throughout the remaining parts of this section, for any fixed arbitrarily small $ \eps>0 $, we make the following assumption on these parameters
\begin{equation}\label{e.ParameterAssumption}
N^{30\eps}(t-u) < N^{20\eps} \dfrac{l}{N} < N^{10\eps} r <t.
\end{equation}
For a fixed index $ k $, as in \cite{bourgade2017eigenvector}, we define the flattening operator with parameter $ a>0 $ by
\begin{equation*}
\left(\flat_a f \right)_j(v) := 
\left\{
\begin{aligned}
& f_j(v) & \ & \mbox{if}\ |j-k|\leq a\\
& \uu_k(t) & \ & \mbox{if}\ |j-k| >a
\end{aligned}
\right.\ \ \ \mbox{for }u \leq v \leq t,
\end{equation*}
and the averaging operator
$$ \left( \av f \right)_j := \dfrac{1}{|\llbracket Nr,2Nr  \rrbracket|} \sum_{a \in \llbracket Nr,2Nr  \rrbracket} \left( \flat_a f \right)_j $$
As shown in \cite[Equation (7.4)]{bourgade2017eigenvector}, the averaging operator can also be represented as a combination of Lipschitz function, i.e. there exists a Lipschitz function $ h $ with $ |h_i-h_j| \leq \tfrac{|i-j|}{Nr} $ such that
\begin{equation}\label{e.AveragingLipschitz}
\left( \av f \right)_j = h_j f_j + (1-h_j) \uu_k(t).
\end{equation}

Finally, for $ u<v<t $, consider the regularized dynamics
\begin{equation*}
\left\{
\begin{aligned}
& \dfrac{\d}{\d v}\m_j(v) = \left( \calP_{\mathrm{short}}(v)\m \right)_j,\\
& \m(u)=\av \varphi(u)
\end{aligned}
\right.
\end{equation*}

The following lemma shows that the averaging the regularized dynamics gives good approximation for $ \uu_k $.

\begin{lemma}\label{l.ModificationDynamics}
Assume $ \calH_{\alpha} $. Let $ c>0 $ be any fixed small constant. There exists a constant $ C>0 $ such that for any $ \eps>0 $, $ N^{-1+C\eps}<\eta,t<1 $, $ \tfrac{t}{2}\leq u<v \leq t $, $ l>N^\eps $, $ j,k \in \llbracket (2c-1)N,(1-2c)N \rrbracket $ such that $ |\gamma_j - \gamma_k|<10r $, and $ z=\gamma_j + \i\eta $, we have
\begin{multline}\label{e.ModificationDynamics}
\left| \dfrac{1}{2N} \im \sum_{|i-j|<l} \dfrac{\m_i(v)}{s_i(v) - z} - \left( \dfrac{1}{2N} \im \sum_{|i-j|<l} \dfrac{1}{s_i(v) - z} \right) \uu_k(v) \right|\\
\prec \left( \dfrac{r}{Nt} + \dfrac{\eta}{Nt} + \dfrac{(Nt)^\alpha}{N^2 t} \left( \dfrac{l}{Nr} + \dfrac{N \eta}{l} + \dfrac{N(t-u)}{l} + \dfrac{1}{N\eta} \right) \right).
\end{multline}
\end{lemma}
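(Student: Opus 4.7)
The plan is to rewrite everything in terms of $\widetilde{\m}_i(v) := \m_i(v) - \uu_k(v)$ and apply Duhamel for the short-range flow $\Us$. Since $\calP_{\mathrm{short}}$ annihilates index-constants, $\partial_v \widetilde{\m} = \calP_{\mathrm{short}}\widetilde{\m} - (\partial_v \uu_k(v))\ones$, and since $\Us$ preserves constants one gets $\widetilde{\m}(v) = \Us(u,v)\widetilde{\m}(u) - (\uu_k(v)-\uu_k(u))\ones$. By Lemma~\ref{l.ApproxUbar}, $|\uu_k(v)-\uu_k(u)| \prec (v-u)/(Nt)$; combined with the trivial bound $\tfrac{1}{2N}\sum_{|i-j|<l}|\im(s_i(v)-z)^{-1}| \lesssim 1$ and the parameter assumption \eqref{e.ParameterAssumption} giving $(v-u)/(Nt) \leq N^{-20\eps} r/(Nt)$, this yields an $O(r/(Nt))$ contribution, absorbed in the target bound.

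Using the Lipschitz representation \eqref{e.AveragingLipschitz}, I would decompose $\widetilde{\m}_i(u) = h_i[\varphi_i(u)-\uu_k(u)] + (1-h_i)[\uu_k(t)-\uu_k(u)]$. The second summand is uniformly $O((t-u)/(Nt))$, so its transport by $\Us(u,v)$ again contributes $O(r/(Nt))$. The main quantity to estimate is therefore
$$\mathcal{I} := \tfrac{1}{2N}\sum_{|i-j|<l}\im\frac{\bigl(\Us(u,v)[h\cdot(\varphi(u)-\uu_k(u)\ones)]\bigr)_i}{s_i(v)-z}.$$
By duality, $\mathcal{I} = \sum_{i'} w_{i'} h_{i'}(\varphi_{i'}(u)-\uu_k(u))$, where $w_{i'}$ denotes the backward-in-time evolution under $\Us(u,v)^\top$ of the Poisson weights $\tfrac{1}{2N}\im(s_i(v)-z)^{-1}\mathbf{1}\{|i-j|<l\}$. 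By Lemma~\ref{l.FiniteSpeed} this backward flow stays within $N^\eps l$ of index $j$, localizing the effective sum.

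On $\mathrm{supp}(h)$, I would split $\varphi_{i'}(u)-\uu_k(u) = [\varphi_{i'}(u)-\uu_{i'}(u)] + [\uu_{i'}(u)-\uu_k(u)]$; the first term is $\prec (Nt)^\alpha/(N^2 t)$ by $\calH_\alpha$, and the second is $\lesssim |i'-k|/(N^2 t) \lesssim r/(Nt)$ on the support. A crude $L^\infty$ bound is too lossy; to recover the sharp geometric prefactors one exploits the Lipschitz profile of $h$ (slope $1/(Nr)$) together with the spread of $w_{i'}$: the amplitude of $w_{i'}$ over its support $\sim l$ times the variation of $h$ per unit index yields the factor $l/(Nr)$ multiplying the hypothesis error. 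The factor $1/(N\eta)$ comes from the local semicircle law error when Lemma~\ref{l.ImprovedLocalAverage} is used to compare $\mathcal{I}$ (with $h$ removed) to $\im\frakS_0(z_v)/(2N\im S_0(z_v))$, which in turn is close to $\uu_k(v)$ by \eqref{e.ApproxUbar}. Finally, $N\eta/l$ measures the leakage of the Poisson window (width $\eta$) outside the index cutoff $l$, and $N(t-u)/l$ measures the leakage from the backward heat flow of the Poisson weights over time $v-u$.

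The main obstacle is the last step, i.e.\ the careful bookkeeping to extract each of the four geometric prefactors $l/(Nr)$, $N\eta/l$, $N(t-u)/l$, and $1/(N\eta)$ without loss. The short-range generator $\calP_{\mathrm{short}}$ is time- and space-inhomogeneous, so the finite speed of propagation Lemma~\ref{l.FiniteSpeed} has to be combined with the rigidity of $\{s_k(v)\}$ and with an explicit decomposition of the Poisson kernel into a bulk piece (handled by Lemma~\ref{l.ImprovedLocalAverage}) and a tail piece (bounded trivially at scale $1/(\eta + l/N)$). This is a quantitative stability analysis of a Dyson-type short-range flow at the hard edge, adapting the strategy of \cite{bourgade2017eigenvector} with explicit $\alpha$-dependent control.
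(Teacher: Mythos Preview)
Your framework (the Lipschitz representation of $\av$, splitting via $\calH_\alpha$ and Lemma~\ref{l.ApproxUbar}, and the rough origin of the prefactors) is in the right spirit, but the duality/backward-flow organization has a genuine gap at the step where you invoke Lemma~\ref{l.ImprovedLocalAverage}. That lemma controls $\im\frakS_v(z)$, i.e.\ the \emph{Poisson-weighted} sum of $\varphi_i(v)$ at time $v$ and spectral parameter $z$. In your scheme the relevant quantity is $\sum_{i'} w_{i'}(\varphi_{i'}(u)-\uu_k(u))$ with weights $w_{i'}=(\Us(u,v)^\top P)_{i'}$ coming from the backward-propagated truncated Poisson kernel. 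These $w_{i'}$ are not Poisson weights at any spectral point, so Lemma~\ref{l.ImprovedLocalAverage} does not apply to them directly, and you have given no mechanism to show that $\Us^\top$ approximately preserves the Poisson structure.

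The paper avoids this by organizing the decomposition in the \emph{forward} direction. It writes the target as $I_1+I_2+I_3$, where $I_1$ is the commutator $[\Us(u,v),\av]$, $I_2$ replaces $\Us(u,v)\varphi(u)$ by $\sT(u,v)\varphi(u)$ via the short-range approximation Lemma~\ref{l.ShortRange}, and $I_3$ involves $\av\,\sT(u,v)\varphi(u)=\av\,\varphi(v)$. The crucial point is that after $I_2$ one lands on $\varphi(v)$ itself, so the Poisson-weighted sum in $I_3$ is exactly $\im\frakS_v(z)$ up to the Lipschitz cutoff $h$, and Lemma~\ref{l.ImprovedLocalAverage} applies directly (this is the paper's $J_1$ term, yielding the $1/(N\eta)$ prefactor). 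The $N(t-u)/l$ prefactor is \emph{not} ``leakage from the backward heat flow'' as you suggest; it is precisely the cost of $I_2$, namely $(v-u)\cdot(N/l)\cdot(Nt)^\alpha/(N^2t)$ from Lemma~\ref{l.ShortRange}. You never invoke Lemma~\ref{l.ShortRange}, so this factor has no source in your argument. The $l/(Nr)$ and $N\eta/l$ prefactors in the paper come respectively from the commutator $I_1$ together with the Lipschitz variation $J_3$, and from the tail sum $J_2$ over $|i-j|\geq l$; your heuristic descriptions for those two are roughly right, but the missing $\Us\to\sT$ step is what makes the whole argument close.
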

\begin{proof}
We decompose the upper line of \eqref{e.ModificationDynamics} into three parts
$$ \dfrac{1}{2N} \im \sum_{|i-j|<l} \dfrac{\m_i(v)}{s_i(v) - z} - \left( \dfrac{1}{2N} \im \sum_{|i-j|<l} \dfrac{1}{s_i(v) - z} \right) \uu_k(v) =:I_1+I_2+I_3, $$
where
\begin{align*}
I_1 &=\dfrac{1}{2N} \im \sum_{|i-j|<l} \dfrac{\left( \Us(u,v) \av \varphi(u) - \av \Us(u,v) \varphi(u) \right)_i}{s_i(v) - z} \\
I_2 &=\dfrac{1}{2N} \im \sum_{|i-j|<l} \dfrac{\left( \av \Us(u,v) \varphi(u) - \av \sT(u,v) \varphi(u) \right)_i}{s_i(v) - z} \\
I_3 &=\dfrac{1}{2N} \im \sum_{|i-j|<l} \dfrac{\left( \av \sT(u,v) \varphi(u) \right)_i - \uu_k(v)}{s_i(v) - z}.
\end{align*}

For the first term $ I_1 $, Note that
\begin{multline*}
\left( \Us(u,v) \av \varphi(u) - \av \Us(u,v) \varphi(u) \right)_i =\\
\dfrac{1}{|\llbracket Nr,2Nr  \rrbracket|} \sum_{a \in \llbracket Nr,2Nr  \rrbracket}  \left( \Us(u,v) \flat_a \varphi(u) - \flat_a \Us(u,v) \varphi(u) \right)_i.
\end{multline*}
When $ |i-k|< a- N^\eps l $, we have
$$ \left( \flat_a \Us(u,v) \varphi(u) \right)_i = \left( \Us(u,v) \varphi(u)  \right)_i, $$
and the finite speed of propagation \eqref{e.FiniteSpeed} yields
$$ \left( \Us(u,v) \flat_a \varphi(u) \right)_i= \left( \Us(u,v) \varphi(u)  \right)_i + N^{-D}. $$
This gives
$$ \left( \Us(u,v) \flat_a \varphi(u) - \flat_a \Us(u,v) \varphi(u) \right)_i < N^{-D} $$
Similarly, this bound also holds in the case $ |i-k|>a+N^\eps l $. Now suppose $ a-N^\eps l \leq |i-k| \leq a+N^\eps l $. Applying \eqref{e.GapUbar} and Hypothesis $ \calH_{\alpha} $, we obtain
\begin{align*}
&\quad \left| \left( \Us(u,v) \flat_a \varphi(u) - \flat_a \Us(u,v) \varphi(u) \right)_i \right|\\
&\leq \max_{m:||m-k|-a| \leq 2 N^\eps l} |\varphi_m(v) - \uu_k(t)|+N^{-D}\\
&\leq \max_{m:||m-k|-a| \leq 2 N^\eps l} |\varphi_m(v) - \uu_m(v)|+ \max_{m:||m-k|-a| \leq 2 N^\eps l} |\uu_m(v) - \uu_k(t)| +N^{-D}\\
&\leq N^{C\eps} \left( \dfrac{(Nt)^\alpha}{N^2 t} + \dfrac{r+(t-u)}{Nt} \right).
\end{align*}
Combined with the estimate above, this implies
\begin{equation}\label{e.LemmaRegularization_I1}
I_1 \leq N^{C\eps} \dfrac{l}{Nr} \left( \dfrac{(Nt)^\alpha}{N^2 t} + \dfrac{r+(t-u)}{Nt} \right).
\end{equation}

For the term $ I_2 $, note that $ |i-j|<l $ implies $ |i| \leq (1-c)N $. Therefore, using the Lipschitz representation of the averaging operator \eqref{e.AveragingLipschitz}, the short-range approximation \eqref{e.ShortRange} gives us
\begin{multline*}
\left| \left( \av \Us(u,v) \varphi(u) - \av \sT(u,v) \varphi(u) \right)_i \right|\\
\leq |\left( \Us(u,v)\varphi(u) - \sT(u,v)\varphi(u) \right)_i| \leq N^{C\eps} (t-u) \left( \dfrac{N}{l}\dfrac{(Nt)^\alpha}{N^2 t} + \dfrac{1}{Nt} \right).
\end{multline*}
This shows
\begin{equation}\label{e.LemmaRegularization_I2}
I_2 \leq N^{C\eps} (t-u) \left( \dfrac{N}{l}\dfrac{(Nt)^\alpha}{N^2 t} + \dfrac{1}{Nt} \right).
\end{equation}

Finally, for the term $ I_3 $, by the Lipschitz representation of the averaging opeator \eqref{e.AveragingLipschitz}, it can be rewritten in the following way,
\begin{align*}
I_3 &= \dfrac{1}{2N} \im\sum_{-N \leq i \leq N} \dfrac{h_j(\varphi_i(v) - \uu_k(v))}{s_i - z} - \dfrac{1}{2N} \im \sum_{|i-j| \geq l} \dfrac{h_j(\varphi_i(v) - \uu_k(v))}{s_i -z}  \\
&\quad +\dfrac{1}{2N} \im \sum_{|i-j|< l}\dfrac{(h_i - h_j)(\varphi_i(v) - \uu_k(v))}{s_i -z} + \dfrac{1}{2N} \im \sum_{|i-j|< l}\dfrac{(1-h_i)(\uu_k(t) - \uu_k(v))}{s_i -z}\\
&=:J_1+J_2+J_3+J_4.
\end{align*}
Using \eqref{e.ApproxUbar} and \eqref{e.ImprovedLocalAverage}, we control $ J_1 $ in the following way
\begin{align*}
J_1 &\leq \dfrac{e^{\frac{v}{2}}}{2N} \left( \im \frakS_v(z) - e^{-\frac{v}{2}}\dfrac{\im S_v(z)}{\im S_0(z_v)}\im \frakS_0(z_v) \right) + \im S_v(z) \left( \dfrac{\im \frakS_0(z_v)}{N \im S_0(z_v)} - \uu_k(v) \right)\\
&\leq N^{C\eps} \left( \dfrac{(Nt)^\alpha}{N^3 t \eta} + \dfrac{1}{N^2 t} + \dfrac{\eta +r}{Nt} \right).
\end{align*}
Applying \eqref{e.GapUbar} to estimate $ J_2 $, we obtain
\begin{align*}
J_2 &\leq \dfrac{1}{2N} \sum_{|i-j| \geq l} \dfrac{\eta}{(\gamma_i - \gamma_j)^2 + \eta^2}\left( |\varphi_i(v) - \uu_i(v)| + |\uu_i(v) - \uu_k(v)| \right)\\
&\leq \dfrac{N^{C\eps}}{2N} \sum_{|i-j| \geq l} \dfrac{\eta}{(\gamma_i - \gamma_j)^2 + \eta^2} \left( \dfrac{(Nt)^\alpha}{N^2 t} + \dfrac{|i-j|}{N^2 t} + \dfrac{Nr}{N^2 t} \right)\\
&\leq N^{C\eps} \left( \dfrac{(Nt)^\alpha}{N^2 t}\dfrac{N \eta }{l}+\dfrac{\eta}{Nt}+\dfrac{r}{Nt} \right).
\end{align*}
Similarly, by the Lipschitz property of $ \{h_i\} $, we estimate $ J_3 $ as follows
\begin{multline*}
J_3 \leq \dfrac{1}{2N} \sum_{|i-j| < l} \dfrac{\eta}{(\gamma_i - \gamma_j)^2 + \eta^2}\dfrac{|i-j|}{Nr}\left( \dfrac{(Nt)^\alpha}{N^2 t} + \dfrac{|i-j|}{N^2 t} + \dfrac{Nr}{N^2 t} \right)\\ \leq N^{C\eps} \dfrac{l}{Nr} \left( \dfrac{(Nt)^\alpha}{N^2 t} + \dfrac{r}{Nt} \right) \leq N^{C\eps} \left( \dfrac{(Nt)^\alpha}{N^2 t} \dfrac{l}{Nr} + \dfrac{r}{Nt} \right).
\end{multline*}
Using the same arguments, by \eqref{e.GapUbar}, we have
$$ J_4 \leq \dfrac{N^{C\eps}}{2N} \sum_{|i-j| < l} \dfrac{\eta}{(\gamma_i - \gamma_j)^2 + \eta^2} \dfrac{t-v}{Nt} \leq N^{C\eps} \dfrac{r}{Nt}. $$
Together with the previous estimates, this leads to
$$ I_3 \leq N^{C\eps} \left( \dfrac{r}{Nt} + \dfrac{\eta}{Nt} + \dfrac{(Nt)^\alpha}{N^2 t} \left( \dfrac{l}{Nr} + \dfrac{N \eta}{l} + \dfrac{1}{N\eta} \right) \right) $$
Combined with \eqref{e.LemmaRegularization_I1} and \eqref{e.LemmaRegularization_I2}, we obtain the desired result.
\end{proof}

Finally, we have all the tools to prove Lemma \ref{l.Bootstrap}.

\begin{proof}[Proof of Lemma \ref{l.Bootstrap}]
We fix some small $ c>0 $ and consider an arbitrarily small $ \eps>0 $. Throughout the whole proof, we do all estimates on the overwhelming probability event where Lemma \ref{l.FiniteSpeed}, Lemma \ref{l.ShortRange} and Lemma \ref{l.ModificationDynamics} hold. For a fixed index $ k \in \llbracket (2c-1)N,(1-2c)N \rrbracket $, we have
\begin{equation*}
|\varphi_k(t) - \m_k(t)| \leq
\left| \left( (\sT(u,t) -\Us(u,t)) \varphi(u) \right)_k \right| + \left| \left( \Us(u,t)(\varphi(u) -\m(u)) \right)_k  \right|.
\end{equation*}
By the definition of the averaging opeartor, we know that $ \m(u) = \av \varphi(u)=\varphi(u) $ on the set $ \{j:|j-k|\leq Nr\} $. Therefore, combined with the finite speed of propagation estimate \eqref{e.FiniteSpeed} for the second term and the short-range approximation \eqref{e.ShortRange} for the first term, we obtain
\begin{equation}\label{e.UkMk}
|\varphi_k(t) - \m_k(t)| \leq  N^{C\eps} (t-u)\left( \dfrac{(Nt)^\alpha}{N^2 t}\dfrac{N}{l} + \dfrac{1}{Nt} \right) + N^{-2022}.
\end{equation}

It suffices to estimate $ |\m_k(t) - \uu_k(t)| $. Consider the function
$$ M(v) := \max_{-N \leq i \leq N} \left( \m_i(v) - \uu_k(t) \right).  $$
Similarly as in Lemma \ref{l.MaxPrin}, we can show a parabolic maximum principle for $ M $ and consequently $ M $ decreases in time. Moreover, note that $ \m_i(u)=\uu_k(t) $ if $ |i-k|\geq 2Nr $.

Let $ j=j(v) $ to denote the index that attains the maximum. If there exists a time $ u \leq v \leq t $ such that $ |j-k|>3Nr $, then in this case the finite speed propagation \eqref{e.FiniteSpeed} gives us
\begin{equation}\label{e.M_tSpecial}
M(t) \leq M(v) = \m_j(v) - \uu_k(t) \leq N^{-2022}.
\end{equation}
On the other hand, now we assume that $ |j(v) - k|<3Nr $ for all $ u \leq v \leq t $. In this case, we have
$$ \dfrac{\d}{\d v}\left( \m_j(v) - \uu_k(t) \right) = \sum_{|i-j| < l} \dfrac{\m_i(v) - \m_j(v)}{2N(s_i(v) - s_j(v))^2} \leq \dfrac{1}{2N} \sum_{|i-j| < l} \dfrac{\m_i(v) - \m_j(v)}{(s_i(v) - s_j(v))^2+\eta^2}. $$
This gives us
\begin{equation*}
\dfrac{\d}{\d v}\left( \m_j(v) - \uu_k(t) \right)
\leq \dfrac{1}{2N\eta} \im \sum_{|i-j|<l} \dfrac{\m_i(v)}{s_i(v) - z} - \left( \dfrac{1}{2N\eta} \im \sum_{|i-j|<l} \dfrac{1}{s_i(v) - z} \right) \m_j(v)
\end{equation*}
and therefore
\begin{align*}
\dfrac{\d}{\d v}\left( \m_j(v) - \uu_k(t) \right)
&\leq \dfrac{1}{\eta}\left( \dfrac{1}{2N} \im \sum_{|i-j|<l} \dfrac{\m_i(v)}{s_i(v) - z} - \left( \dfrac{1}{2N} \im \sum_{|i-j|<l} \dfrac{1}{s_i(v) - z} \right) \uu_k(v) \right)\\
&\quad + \dfrac{1}{\eta} \left(\dfrac{1}{2N} \im \sum_{|i-j|<l} \dfrac{1}{s_i(v) - z}\right) \left( \uu_k(v) - \uu_k(t) \right)\\
&\quad + \dfrac{1}{\eta} \left(\dfrac{1}{2N} \im \sum_{|i-j|<l} \dfrac{1}{s_i(v) - z}\right) \left( \uu_k(t) - \m_j(v) \right).
\end{align*}
Applying Lemma \ref{l.ModificationDynamics} and Lemma \ref{l.ApproxUbar} yields
\begin{equation*}
\dfrac{\d}{\d v}M(v^+) \lesssim
-\dfrac{1}{\eta}M(v) + \dfrac{N^{C\eps}}{\eta}\left( \dfrac{r}{Nt} + \dfrac{\eta}{Nt} + \dfrac{(Nt)^\alpha}{N^2 t} \left( \dfrac{l}{Nr} + \dfrac{N \eta}{l} + \dfrac{N(t-u)}{l} + \dfrac{1}{N\eta} \right) \right),
\end{equation*}
where the left-hand side represents the right derivative of $ M $ at time $ v $. Let $ \eta=\tfrac{(t-u)}{N^{\eps}} $, then above inequality leads to
$$ M(t) \leq N^{C\eps} \left( \dfrac{r}{Nt} + \dfrac{(Nt)^\alpha}{N^2 t} \left( \dfrac{l}{Nr} + \dfrac{N (t-u)}{l} + \dfrac{1}{N(t-u)} \right) \right) $$
Choosing
\begin{equation}\label{e.OptimalParameter}
r=\dfrac{(Nt)^{\frac{3\alpha}{4}}}{N},\ \ l=(Nt)^{\frac{\alpha}{2}},\ \ (t-u)=\dfrac{(Nt)^\frac{\alpha}{4}}{N},
\end{equation}
then we have
$$ M(t) < N^{C\eps} \dfrac{(Nt)^{\frac{3\alpha}{4}}}{N^2 t} $$
Similarly, this bound also holds for $ -\max_{-N \leq i \leq N} \left( \m_i(s) - \uu_k(t) \right) $. Combined with \eqref{e.UkMk} and \eqref{e.M_tSpecial}, this completes the proof.
\end{proof}

\section{Proofs for Quantitative Universality}\label{app:Comparison}
In this section, we prove the quantitative resolvent comparison Proposition \ref{prop:Resolvent_Comparison}.

The key idea is based on the Lindeberg exchange method (for a detailed introduction we refer to the monograph \cite{erdos2017book,van2014probability}). We first fix an ordering map of the indices $ \phi: \{(i,j):1 \leq i,j \leq N\} \to \llbracket N^2 \rrbracket $. For $ 0 \leq k \leq N^2 $, let $ H_k $ be the random matrix defined as
\begin{equation*}
(H_k)_{ij}=
\begin{cases}
X_{ij} & \textrm{if } \phi(i,j) \leq k\\
Y_{ij} & \textrm{otherwise}
\end{cases}
,
\end{equation*}
so that $ H_0=Y $ and $ H_{N^2}=X $. By telescoping summation, it suffices to show the following is true uniformly in $ 1 \leq k \leq N^2 $,
\begin{equation}\label{eq:Telescope}
\left| \E[F(\Tr f(H_k))] - \E[F(\Tr f(H_{k-1}))] \right| \leq  \frac{N^{C\eps}}{N^2} \pth{\frac{1}{\rho N^2} + \frac{\pth{\rho N^a}^5}{\sqrt{N}} + t \rho N^a}
\end{equation}

To prove \eqref{eq:Telescope}, we use the Helffer-Sj\"{o}strand formula. Let $ \chi $ be a smooth symmetric cutoff function such that $ \chi(y)=1 $ if $ |y|<N^{-a} $ and $ \chi(y)=0 $ if $ |y|>2 N^{-a} $, with $ \|\chi'\|_\infty \leq N^a $.
For any matrix $ H \in \sth{H_k}_{k=0}^{N^2} $, let $ \tH $ denote its Girko symmetrization
\begin{equation*}
\tH=\left(
\begin{matrix}
0 & H^\top\\
H & 0
\end{matrix}
\right)
\end{equation*}
Recall that the symmetrized singular values $ \sth{\sigma_i(H)}_{i=-N}^N $ are the eigenvalues of $ \tH $. With the cutoff function $ \chi $, applying Lemma \ref{lem:HS_Formula} to $ \tH $ yields
\begin{equation}\label{eq:HS}
\Tr f(H) = \int_\C g(z) \Tr (\tH-z)^{-1} \, \d^2 z,
\end{equation}
where $ \d^2 z $ is the Lebesgue measure on $ \C $ and
$$ g(z) := \frac{1}{\pi} \qth{ \i y f''(x) \chi(y) + \i \pth{ f(x) + \i y f'(x) } \chi'(y) },\ \ z=x+y\i. $$
The analysis of the comparison can be proceeded in the following steps:

\smallskip

\emph{Step 1: Approximation of $ \Tr f(H) $.}
We first truncate the integral in \eqref{eq:HS} and define
$$ \calT(H) := \int_{|y|>N^{-2}} g(z) \Tr (\tH-z)^{-1} \d^2 z. $$
The approximation error can be bounded by
\begin{align*}
\abs{\Tr f(H) - \calT(H)} &\lesssim \iint_{|y|<N^{-2},E<|x|<E+\rho} |f''(x)| \sum_{-N \leq k \leq N} \frac{y^2}{|\sigma_k - (x+y\i)|^2} \d x \d y\\
& \lesssim \int_{E<|x|<E+\rho} \frac{1}{\rho^2 N^2} \pth{\frac{1}{N^2} \sum_{-N \leq k \leq N} \frac{1}{|\sigma_k - (x+\frac{\i}{N})|^2} } \d x.
\end{align*}
For singular values near the origin, i.e. $ |k| \leq N^{C\eps} $, we have 
$$ \int_{E<|x|<E+\rho} \frac{1}{|\sigma_k - (x+\frac{i}{N})|^2} \d x \leq \int_{E<|x|<E+\rho} N^2 \d x \lesssim \rho N^2. $$
On the other hand, for $ |k| > N^{C\eps} $, by the rigidity of singular values, we have the following overwhelming probability bound
$$ \int_{E<|x|<E+\rho} \frac{1}{|\sigma_k - (x+\frac{i}{N})|^2} \d x \leq \frac{\rho}{|E-\gamma_k|^2}. $$
Combining the above two bounds together, we obtain
\begin{equation*}
\abs{\Tr f(H) - \calT(H)} \leq \frac{N^{C\eps}}{\rho N^2} + \frac{1}{\rho N^4} \sum_{|k|>N^{C\eps}} \frac{1}{|E-\gamma_k|^2} \lesssim \frac{N^{C\eps}}{\rho N^2} + \frac{1}{\rho N^3} \pth{ \frac{1}{N} \sum_{k \geq 1} \frac{1}{(k/N)^2} } \lesssim \frac{N^{C\eps}}{\rho N^2}
\end{equation*}
with overwhelming probability.

\smallskip

\emph{Step 2: Expansions and moment matching.} With the approximation by $ \calT(H) $, it suffices to show that
\begin{equation}\label{eq:Telescoping_Claim}
\abs{\E[F(\calT(H_k))] - \E[F(\calT(H_{k-1}))]} \lesssim \frac{N^{C\eps}}{N^2} \pth{ \frac{(\rho N^a)^5}{\sqrt{N}} + t \rho N^a }
\end{equation}
uniformly for all $ 1 \leq k \leq N^2 $. Now consider a fixed $ 1 \leq \omega \leq N^2 $ corresponding to the index $ (i,j) $, i.e. $ \phi(i,j)=\omega $. We rewrite the matrices $ H_\omega $ and $ H_{\omega-1} $ in the following way
$$ H_{\omega} = W + \frac{1}{\sqrt{N}}U  ,\ \ H_{\omega-1} = W + \frac{1}{\sqrt{N}}V, $$
where the matrix $ W $ coincides with $ H_{\omega-1} $ and $ H_{\omega} $ except on the $ (i,j) $ entry with $ W_{ij}=0 $. Then note that the matrices $ U,V $ satisfy $ U_{ij}=\sqrt{N}X_{ij} $ and $ V_{ij}=\sqrt{N}Y_{ij} $ and all other entries are zero. Recall the notation $ \tH $ for the Girko symmetrization. Consider the resolvents of the matrices $ \tW $ and $ \tH_\omega $
$$ R:=(\tW-z)^{-1},\ \ S:=(\tH_\omega-z)^{-1}. $$
The Taylor expansion yields
\begin{equation}\label{eq:Taylor}
\begin{aligned}
&\E[F(\calT(H_{\omega}))] - \E[F(\calT(H_{\omega-1}))]\\
&\quad = \sum_{k=1}^4 \E \qth{ \frac{F^{(k)}(\calT(W))}{k!} \pth{ (\calT(H_\omega) - \calT(W))^k - (\calT(H_{\omega-1}) - \calT(W))^k } }\\
&\quad \quad + O(\|F^{(5)}\|_\infty) \, \E \qth{ (\calT(H_\omega) - \calT(W))^5 + (\calT(H_{\omega-1}) - \calT(W))^5 }.
\end{aligned}
\end{equation}
We first control the term corresponding to the fifth derivative. By Lemma \ref{lem:Resolvent_Expansion}, the first order resolvent expansion gives us
$$ \frac{1}{N} \Tr S - \frac{1}{N} \Tr R = \frac{1}{\sqrt{N}} \Tr S \tU R. $$
Consequently,
$$ \abs{\calT(H_\omega) - \calT(W)} \leq \frac{1}{\sqrt{N}} \int_{|y|>N^{-2}} |g(z)| \abs{\Tr S(z) \tU R(z)} \d^2 z. $$
We can restrict the integral on the domain $ \sth{z=x+y \i : N^{-2}<|y|<2N^{-a},E<|x|<E+\rho} $ as the contribution outside this region is negligible. Moreover, a key observation is that the matrix $ \tU $ only has two non-zero entries. Thus,
\begin{multline*}
\abs{\calT(H_\omega) - \calT(W)} \lesssim  N^{\frac{1}{2}+C\eps} \int_{N^{-2}<|y|<2N^{-a},E<|x|<E+\rho} |g(z)| \bigg(\max_{k \neq \ell}|S_{k \ell}(z)| \bigg) \bigg(\max_{k \neq \ell} |R_{k \ell}(z)| \bigg) \d^2 z\\
+  N^{-\frac{1}{2}+C\eps} \int_{N^{-2}<|y|<2N^{-a},E<|x|<E+\rho} |g(z)| \pth{\max_k |S_{kk}(z)|} \pth{\max_{k}|R_{kk}(z)|} \d^2 z.
\end{multline*}
Note that in this integral domain, the scale of $ |y| $ is smaller than the natural size of the local law. Therefore, we will use a suboptimal version of the local semicircle law for a larger spectral domain, which was discussed in \cite{erdHos2013spectral,lee2018local}. For $ z=x+\i y $ in this integral domain, with overwhelming probability we have
$$ \max_{k,\ell} \abs{S_{k \ell}(z) - \delta_{k \ell} \, m_{sc}(z)} \leq N^{C\eps} \pth{\frac{1}{\sqrt{N}} + \Psi(z) },\ \ \Psi(z) = \frac{1}{Ny} + \sqrt{\frac{\im m_{sc}(z)}{Ny}}. $$
The same result also holds for $ R_{k \ell}(z) $. By Lemma \ref{lem:m_sc}, we have $ \Psi(z) \lesssim \frac{1}{\sqrt{Ny}} $ for $ z $ in the integral domain. Note that the contribution of the diagonal resolvent entries is negligible. Therefore, with overwhelming probability we have
$$ \abs{\calT(H_\omega) - \calT(W)} \lesssim N^{C\eps} N^{1/2} \int_{N^{-2}<|y|<2N^{-a},E<|x|<E+\rho} \frac{|g(z)|}{Ny} \d^2 z \lesssim N^{C\eps} N^{-1/2} \rho N^a. $$
Similarly, this bound also holds for $ |\calT(H_{\omega-1}) - \calT(W)| $, and we obtain 
$$ \E[(\calT(H_\omega) - \calT(W))^5] \lesssim \frac{N^{C\eps}}{N^{2}} \pth{ N^{-\frac{1}{2}} (\rho N^a)^5 },\ \ \E[(\calT(H_{\omega-1}) - \calT(W))^5] \lesssim \frac{N^{C\eps}}{N^{2}} \pth{ N^{-\frac{1}{2}} (\rho N^a)^5 }. $$ 
Hence the fifth order term in \eqref{eq:Taylor} is bounded by
\begin{equation}\label{eq:Taylor_Fifth}
O(\|F^{(5)}\|_\infty) \, \E \qth{ (\calT(H_\omega) - \calT(W))^5 + (\calT(H_{\omega-1}) - \calT(W))^5 } \lesssim \frac{N^{C\eps}}{N^{2}}  \frac{ (\rho N^a)^5 }{\sqrt{N}} .
\end{equation}

Now we consider the first term $ k=1 $ in the Taylor expansion \eqref{eq:Taylor}.
Denote
$$ \hat{R}:=\frac{1}{N} \Tr R,\ \ \hat{R}_X^{(m)} = \frac{(-1)^m}{N} \Tr (R\tU)^m R,\ \ \Omega_X := -\frac{1}{N} \Tr (R\tU)^5 S, $$
and also define
$$ \hat{R}_Y^{(m)}:=\frac{(-1)^m}{N} \Tr (R\tV)^m R,\ \ \Omega_Y := -\frac{1}{N} \Tr (R \tV)^5 (\tH_{\omega-1}-z)^{-1}. $$
Using the resolvent expansion (Lemma \ref{lem:Resolvent_Expansion}) up to the fifth order, we obtain
$$ \frac{1}{N} \Tr S = \hat{R} + \sum_{m=1}^4 N^{-\frac{m}{2}} \hat{R}_X^{(m)} + N^{-\frac{5}{2}} \Omega_X. $$
A Similar expansion also holds for $ (\tH_{\omega-1}-z)^{-1} $. Then we have
\begin{multline}\label{eq:Moment_Matched}
\E \qth{ F'(\calT(W)) \pth{ \calT(H_\omega) - \calT(H_{\omega-1}) } }\\
= \E \qth{ F'(\calT(W)) \int_{|y|>N^{-2}} g(z) \pth{ \sum_{m=1}^4 N^{-\frac{m}{2}+1} (\hat{R}_X^{(m)} - \hat{R}_Y^{(m)}) + N^{-\frac{3}{2}} (\Omega_X - \Omega_Y) } \d^2 z }
\end{multline}
A key observation is that for $ 1 \leq m \leq 3 $, the terms $ \hat{R}_X^{(m)} $ and $ \hat{R}_Y^{(m)} $ only depend on the first three moments of $ X_{ij} $ and $ Y_{ij} $. Recall that the first three moments of $ X_{ij} $ and $ Y_{ij} $ are identical. Therefore, the terms corresponding to $ 1 \leq m \leq 3 $ in \eqref{eq:Moment_Matched} makes no contribution.

\smallskip

\emph{Step 3: Higher order error.}
For the $ m=4 $ term in \eqref{eq:Moment_Matched}, note that
\begin{equation}\label{eq:Trace_Expansion}
\Tr (R \tU)^4 R = \sum_{1 \leq \ell \leq 2N} \sum_{\{\alpha_k,\beta_k\} = \{i+N,j\}} R_{\ell \alpha_1} \tU_{\alpha_1 \beta_1} R_{\beta_1 \alpha_2} \tU_{\alpha_2 \beta_2} R_{\beta_2 \alpha_3} \tU_{\alpha_3 \beta_3} R_{\beta_3 \alpha_4} \tU_{\alpha_4 \beta_4} R_{\beta_4 \ell}.
\end{equation}
A similar formula is also true for $ \Tr (R \tV)^4 R $. Note that typically we have $ \ell \neq \alpha_1 $ and $ \ell \neq \beta_4 $, but we may have $ \beta_1=\alpha_2 $, $ \beta_2=\alpha_3 $, $ \beta_3=\alpha_4 $. Moreover, the terms with either $ \ell=1+N $ or $ \ell=j $ are combinatorially negligible in the summation and therefore we can ignore these terms in the following computations. Recall that the difference between the fourth moments of $ X_{ij} $ and $ Y_{ij} $ is bounded by $ t $. Thus, we have
$$ \E \qth{ N (\hat{R}_X^{(4)} - \hat{R}_Y^{(4)}) } = \E \qth{ \Tr (R \tU)^4 R - \Tr (R \tV)^4 R } \lesssim N t \pth{\max_{k \neq \ell}|R_{k \ell}|}^2 \pth{\max_{k} |R_{kk}|}^3. $$
As mentioned above, for the integral in \eqref{eq:Moment_Matched} we can restrict the integral domain to $ N^{-2}<|y|<2N^{-a} $ and $ E<|x|<E+\rho $. In this region, the entries of the resolvent are bound by $ \max_{k \neq \ell}|R_{k \ell}(z)| \lesssim \frac{N^{C\eps}}{\sqrt{Ny}} $ and $ \max_{k}|R_{kk}(z)| \lesssim N^{C\eps} $. As a consequence,
\begin{multline}\label{eq:Moment_Fourth}
\left| \E \qth{ F'(\calT(W)) \int_{|y|>N^{-2}} g(z) N^{-\frac{4}{2}+1} (\hat{R}_X^{(4)} - \hat{R}_Y^{(4)}) \d^2 z } \right|\\
\lesssim N^{C\eps} \frac{t}{N} \int_{N^{-2}<|y|<2N^{-a},E<|x|<E+\rho} \frac{|g(z)|}{N y} \d^2 z \lesssim \frac{N^{C\eps}}{N^2} (t \rho N^a).
\end{multline}

For the term $ \Omega_X-\Omega_Y $, since these terms involve the higher moments of $ X_{ij} $ and $ Y_{ij} $, we simply bound it by the size of $ \Omega_X $ and $ \Omega_Y $. By a similar expansion as in \eqref{eq:Trace_Expansion} and the local law, we have $ |\Omega_X|,|\Omega_Y| \lesssim \frac{N^{C\eps}}{Ny} $. Therefore,
\begin{multline}\label{eq:Moment_Fifth}
\left| \E \qth{ F'(\calT(W)) \int_{|y|>N^{-2}} g(z) N^{-\frac{3}{2}} \Omega_X \d^2 z } \right|\\
\lesssim N^{C\eps} N^{-\frac{3}{2}} \int_{N^{-2}<|y|<2N^{-a},E<|x|<E+\rho} \frac{|g(z)|}{Ny} d^2 z \lesssim \frac{N^{C\eps}}{N^2} \frac{\rho N^a}{\sqrt{N}} \leq \frac{N^{C\eps}}{N^2} \frac{(\rho N^a)^5}{\sqrt{N}}.
\end{multline}
The same bound also holds for $ \Omega_Y $.

Finally, as explained in classical literature of random matrix theory (see e.g. \cite[Theorem 17.4]{erdos2017book}), the contributions of higher order terms in the Taylor expansion \eqref{eq:Taylor} are of smaller order. Consequently, combining \eqref{eq:Taylor_Fifth}, \eqref{eq:Moment_Fourth} and \eqref{eq:Moment_Fifth} yields the claim \eqref{eq:Telescoping_Claim}, which implies the desired result \eqref{eq:Resolvent_Comparison}.



\bibliography{Quant_Hard_Edge}
\bibliographystyle{alpha}

\end{document}